\newcommand{\footremember}[2]{%
    \footnote{#2}
    \newcounter{#1}
    \setcounter{#1}{\value{footnote}}%
}
\newcommand{\footrecall}[1]{%
    \footnotemark[\value{#1}]%
}
\title{Near-Linear Runtime for a \\Classical Matrix Preconditioning Algorithm}
\author{Xufeng Cai\footremember{wisc}{Department of Computer Sciences, University of Wisconsin-Madison. XC (\href{mailto:xcai74@wisc.edu}{xcai74@wisc.edu}), JD (\href{mailto:jelena@cs.wisc.edu}{jelena@cs.wisc.edu}).} 
\and Jason M. Altschuler\footremember{upenn}{Department of Statistics and Data Science, University of Pennsylvania. \href{mailto:alts@upenn.edu}{alts@upenn.edu}.}
\and Jelena Diakonikolas\footrecall{wisc}}
\date{}
\definecolor{mydarkblue}{rgb}{0,0.08,0.45}
\newtheorem{lemma}{Lemma}
\newtheorem{corollary}{Corollary}
\newtheorem{proposition}{Proposition}
\newtheorem{fact}{Fact}
\newtheorem{problem}{Problem}
\newtheorem{implementation}{Implementation}
\def\1{\bm{1}}
\def\eps{{\epsilon}}
\def\ebar{\epsilon'}
\def\vone{{\bm{1}}}
\def\vc{{\bm{c}}}
\def\ve{{\bm{e}}}
\def\vr{{\bm{r}}}
\def\vu{{\bm{u}}}
\def\mA{{\bm{A}}}
\def\mD{{\bm{D}}}
\def\mM{{\bm{M}}}
\def\cO{{\mathcal{O}}}
\newcommand{\R}{\mathbb{R}}
\DeclareMathOperator*{\argmin}{arg\,min}
\newcommand{\bl}[1]{^{(#1)}}
\def\d{{\texttt{d}}}
\newcommand{\diag}{\mathbb{D}}
\newcommand{\paragraphsp}[1]{\paragraph*{#1}}
\DeclareMathOperator{\Xone}{X_{\Circled{1}}}
\DeclareMathOperator{\Yone}{Y_{\Circled{1}}}
\DeclareMathOperator{\Zone}{Z_{\Circled{1}}}
\DeclareMathOperator{\Sone}{S_{\Circled{1}}}
\DeclareMathOperator{\Xtwo}{X_{\Circled{2}}}
\DeclareMathOperator{\Ytwo}{Y_{\Circled{2}}}
\DeclareMathOperator{\Ztwo}{Z_{\Circled{2}}}
\DeclareMathOperator{\Stwo}{S_{\Circled{2}}}
\begin{document}

\maketitle

\begin{abstract}

In 1960, Osborne proposed a simple iterative algorithm for matrix balancing with outstanding numerical performance. Today, it is the default preconditioning procedure before eigenvalue computation and other linear algebra subroutines in mainstream software packages such as Python, Julia, MATLAB, EISPACK, LAPACK, and more. 
Despite its widespread usage,
Osborne's algorithm has long resisted theoretical guarantees for its runtime: the first polynomial-time guarantees were obtained only in the past decade, and recent near-linear runtimes remain confined to variants of Osborne's algorithm with important differences that make them simpler to analyze but empirically slower. In this paper, we address this longstanding gap between theory and practice by proving that Osborne's original algorithm---the \emph{de facto} preconditioner in practice---in fact has a near-linear runtime. This runtime guarantee (1) is optimal in the input size up to at most a single logarithm, (2) is the first runtime for Osborne's algorithm that
does not dominate the runtime of downstream tasks like eigenvalue computation, and (3) improves upon the theoretical runtimes for all other variants of Osborne's algorithm.

\end{abstract}

\section{Introduction}

In his seminal 1960 paper, Osborne proposed an idea for matrix preconditioning that has since become ubiquitous~\citep{osborne1960pre}: prior to computing eigenvalues of a matrix $\mA$, one should first find a diagonal similarity transform $\mM = \mD\mA\mD^{-1}$ that is ``balanced'', i.e., such that the norm of its $i$-th row is equal to the norm of its $i$-th column, for all $i$. One should then run the downstream computations on $\mM$ rather than on $\mA$, the rationale being that diagonal scalings do not affect the eigenvalues, and standard algorithms for linear algebra are typically much more numerically stable for balanced matrices. 

\par Nowadays, this preconditioning step of ``matrix balancing'' is used by default before eigenvalue computation in mainstream software packages such as Python, Julia, MATLAB, LAPACK, and EISPACK~\citep{NumPyEig,JuliaEigen,MathWorksEig,anderson1999lapack,smith2013matrix}. 
Matrix balancing is also commonly used for preconditioning before other core linear algebra subroutines such as matrix exponentiation~\citep{ward1977numerical,higham2005scaling}. Further, it has since become a key component of applications beyond preconditioning, including in webpage ranking~\citep{tomlin2003new}, combinatorial optimization~\citep{altschuler2022approximating}, and economics and operations research~\citep{schneider1990comparative}. 

\par This widespread adoption is largely due to an efficient algorithm for matrix balancing---proposed by Osborne in the same 1960 paper~\citep{osborne1960pre}. In practice, this algorithm efficiently computes a diagonal scaling $\mD\mA\mD^{-1}$ that is approximately balanced, in time that is essentially negligible compared to downstream tasks like eigenvalue computation~\citep{press2007numerical}. Because of this, it has remained the default implementation in modern software packages.

\par Osborne's algorithm can be defined for any norm with which to balance row/column pairs~\citep{parlett1969balancing}. All of the aforementioned software packages use the $\ell_1$ or $\ell_2$ version\footnote{In fact, the standard advice is to use $\ell_1$ balancing since, up to a simple transformation, it is equivalent to $\ell_2$, while its simpler implementation makes it faster and more numerically stable~\citep{press2007numerical}. Remaining uses of $\ell_2$ balancing in existing implementations therefore appear to be historical artifacts.}, and actually these two versions of matrix balancing (and more generally all $\ell_p$, $p < \infty$) are equivalent as they are trivially reducible to each other (to see this, consider entrywise raising the matrix $\mA$ to the power $p$). See the related work for a lengthier discussion and for comments on the $\ell_{\infty}$ version of matrix balancing, which has an interesting line of work in its own right. In this paper, we restrict to the $\ell_1$ norm as this is the standard implementation in practice, and also because this simplifies notation: the $\ell_1$ norm simply amounts to row/column sums after replacing each entry of $\mA$ by its absolute value (without loss of generality, as this does not affect the optimal $\mD$). In this notation, the matrix balancing problem can be restated more simply as follows.

\begin{problem}\label{prob:prob}
	Given an $n \times n$ matrix $\mA$ with non-negative entries, find an $n \times n$ diagonal matrix $\mD$ with positive entries such that $\mM = \mD\mA\mD^{-1}$ is balanced in the sense that its row sums $r(\mM) := \mM\vone$ are equal to its column sums $c(\mM) := \mM^\top \vone$. Here, $\vone$ denotes the all-ones vector in $\R^n$. 
\end{problem}

Osborne's algorithm is simple to state\footnote{Implementations often also restrict elements of $\mD$ to powers of $2$ to avoid rounding errors, and pre-process $\mA$ by permuting its rows/columns into irreducible diagonal blocks which can then be separately balanced via Osborne's algorithm in parallel. For simplicity of exposition, we ignore both of these minor modifications since the former is just to prevent rounding errors, and the latter is easily performed in linear time~\citep{tarjan1972depth}.}. As is standard~\cite{press2007numerical}, 
here and henceforth suppose that the diagonal of $\mA$ is zeroed out, as this has no effect on the balancing $\mD$ and simplifies the statement of Osborne's algorithm. Osborne's algorithm initializes $\mD$ to the identity; this corresponds to no scaling. Then iteratively for each index $i$, it multiplies $\mD_{ii}$ by $\sqrt{c_i(\mM) / r_i(\mM)}$ so that the $i$\textsuperscript{th} row sum $r_i(\mM)$ equals the $i$\textsuperscript{th} column sum $c_i(\mM)$ of the current balancing $\mM = \mD\mA\mD^{-1}$. The update indices $i$ are chosen in cyclic order $i \in \{1, \dots, n\}$, until a convergence criterion is met, for instance when the normalized $\ell_1$-imbalance is below some threshold $\eps$:
\begin{align}
	\frac{\big\|r(\mM) - c(\mM)\big\|_1}{\sum_{ij}\mM_{ij}} \leq \eps\,.
	\label{eq:eps}
\end{align}
This notion of approximate balancing is useful for downstream applications, has several natural interpretations, and implies other notions studied in prior work; see \S\ref{sec:related-work} for a further discussion.

\par It has been a longstanding challenge to explain the practical efficacy of Osborne's algorithm. Early results in the 1960-1980s showed asymptotic convergence of the algorithm~\citep{osborne1960pre,grad1971matrix}, uniqueness of the balanced matrix~\citep{hartfiel1971concerning}, and characterized what matrices can be balanced~\citep{eaves1985line}. 
Only in the past decade did breakthrough works establish quantitative convergence rates for Osborne's algorithm, first for $\ell_{\infty}$ balancing~\citep{schulman2017analysis}, and then for $\ell_p$ balancing, $p < \infty$~\citep{ostrovsky2017matrix}. Ignoring the dependence on the approximation error $\epsilon > 0$ (which for most practical purposes is treated as a constant), these runtime bounds scale as roughly $mn^2$ in the dimension $n$ of the input matrix and its number of non-zero entries $m$. Note that an $mn^2$ runtime for matrix preconditioning dominates the roughly $\cO(n^3)$ runtime of eigenvalue computation, and thus would seem to defeat the purpose of being a cheap pre-processing step. Thus, although these breakthrough results establish polynomial runtimes for Osborne's algorithm, they do not explain its practical efficacy.

\par Towards this goal, recent work has established faster runtime bounds for \emph{variants} of Osborne's algorithm that choose the update indices $i$ in non-cyclic ways. In particular,~\citep{ostrovsky2017matrix} established $\tilde{\cO}(n^2)$ runtime bounds for variants in which the update index $i$ is chosen greedily (according to the maximum imbalance) or randomly (according to their norms), and~\citep{altschuler2023near} showed that uniform-random choices of indices provide $\tilde{\cO}(m)$ runtime bounds. These runtime bounds all scale inverse polynomially in the target error $\eps$ and logarithmically in the matrix conditioning $\kappa := \sum_{ij} \mA_{ij} / \min_{ij : \mA_{ij} \neq 0} \mA_{ij}$;
see the related work \S\ref{sec:related-work} for details. However, despite these improved runtimes for variants of Osborne's algorithm, the original proposal of cyclic updates remains the de facto implementation. This is for many reasons: in practice, Osborne's original algorithm typically converges as fast (if not faster, see \S\ref{sec:num-ex} for numerical examples), it is cache-friendly (since it accesses adjacent rows/columns), it requires no overhead (the greedy variant requires additional data structures to amortize the cost of greedily choosing update indices), it exploits sparsity of the input matrix (leading to empirical runtimes scaling in $m$ rather than $n^2$),
and, as we show, it can be safely implemented with polylogarithmic bit complexity (which is exponentially better than for the greedy method).

\par Another line of work has sought to develop entirely different algorithms for matrix balancing, by solving a convex optimization reformulation of the matrix balancing problem; see the related work in \S\ref{sec:related-work}. However, these algorithms are not practical (at least currently) due to their reliance on complicated subroutines such as the Ellipsoid algorithm~\citep{kalantari1997complexity} and Laplacian solvers~\citep{cohen2017matrix,allen2017much}.

\par The literature on matrix balancing, therefore, suffers from a longstanding gap between theory and practice. In practice, Osborne's original algorithm outperforms all other competitors, has remained the  implementation of choice for over 60 years, and appears to run in roughly near-linear time $\tilde{\cO}(m)$~\citep[\S11.6.1]{press2007numerical}.  Such a runtime \emph{would} finally explain the practical efficacy of Osborne's algorithm as, in particular, it would be negligible compared to the runtime of downstream tasks like eigenvalue computation. %

\subsection{Contribution}\label{ssec:intro:overview}

We address this longstanding open problem by proving a near-linear runtime bound for Osborne's original algorithm. This provides a roughly $\cO(n^2)$ improvement over the previously best-known runtime for Osborne's original algorithm~\citep{ostrovsky2017matrix}. We emphasize that this runtime not only is optimal in the input size $m$ (up to at most a single logarithm),
but moreover is the first runtime that does not dominate the $\cO(n^3)$ runtime of downstream tasks like eigenvalue computation---as one would hope for a preconditioner. Our main result is summarized in the following theorem, where $\d$ denotes the diameter of the graph $G_\mA = ([n], \{(i,j) : \mA_{ij} \neq 0\})$ and we recall 
$\kappa = \sum_{ij} \mA_{ij} / \min_{ij : \mA_{ij} \neq 0} \mA_{ij}$ controls the conditioning of $\mA$. 

\begin{restatable}[Main result]{theorem}{mainThm}\label{thm:main}
Given a balanceable matrix $\mA$ and accuracy $\epsilon > 0$, Osborne's algorithm finds an $\eps$-balancing after $\cO\big(\frac{\log \kappa}{\epsilon}\min\{\frac{1}{\epsilon}, \d\}\big)$ cycles. This amounts to $\cO\big(m \frac{\log \kappa}{\epsilon}\min\{\frac{1}{\epsilon}, \d\}\big)$ arithmetic operations.
\end{restatable}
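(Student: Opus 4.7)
The plan is to establish a per-cycle multiplicative decrease of the mass potential $\Phi(\mM) := \vone^\top \mM \vone = \sum_{ij} \mM_{ij}$. A single Osborne update at coordinate $i$ is the exact coordinate minimizer of $\Phi$ in the variable $\log \mD_{ii}$; the classical AM--GM identity $r_i + c_i - 2\sqrt{r_i c_i} = (\sqrt{r_i}-\sqrt{c_i})^2$ then yields the per-step drop
\[
\Phi(\mM) - \Phi(\mM_{\mathrm{new}}) = \bigl(\sqrt{r_i(\mM)} - \sqrt{c_i(\mM)}\bigr)^2.
\]
Since $\Phi$ is non-increasing along the iterates and a standard estimate gives $\log(\Phi_0/\Phi_\star)=\cO(\log\kappa)$ (with $\Phi_\star$ the balanced potential), the theorem reduces to showing that, whenever the current matrix is not $\eps$-balanced, one full cycle shrinks $\Phi$ by a factor $1 - \Omega(\eta)$ with $\eta = \eps \cdot \max\{\eps, 1/\d\}$; this $\Omega(\eta)$-factor decrease combined with the $\cO(\log\kappa)$-range immediately yields the stated cycle count.

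For the $\eps^2$ part of $\eta$ I would combine the Engel form of Cauchy--Schwarz with an $\ell_1$-bookkeeping lemma. Writing $p_i := r_i^{(i-1)}$, $q_i := c_i^{(i-1)}$ for the instantaneous row and column sums at the moment coordinate $i$ is processed in a cycle starting at time $t$, and $S := \sum_i |p_i - q_i|$, the Engel form gives
\[
\Phi_t - \Phi_{t+n} = \sum_{i=1}^{n} (\sqrt{p_i} - \sqrt{q_i})^2 \;\ge\; \frac{\bigl(\sum_i |p_i - q_i|\bigr)^2}{\sum_i (\sqrt{p_i}+\sqrt{q_i})^2} \;\ge\; \frac{S^2}{4\Phi_t}.
\]
A direct computation using $\sum_j \mM_{jk} = c_k$, $\sum_j \mM_{kj} = r_k$, together with the identity $|\sqrt{r_k/c_k}-1|c_k + |\sqrt{c_k/r_k}-1|r_k = |r_k - c_k|$, shows that one Osborne update at coordinate $k$ perturbs the signed imbalances of the other coordinates by at most $|r_k - c_k|$ in total $\ell_1$. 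Summing this bound over the cycle, the cumulative perturbation of $(r_i - c_i)$ from its start-of-cycle value is controlled in $\ell_1$ by $S$ itself, and the triangle inequality yields $S \ge I_t - S$, i.e.\ $S \ge I_t/2$ with $I_t := \|r^{(t)} - c^{(t)}\|_1$. Substituting back gives $\Phi_t - \Phi_{t+n} \ge \eps^2 \Phi_t / 16$ whenever the matrix is not $\eps$-balanced, delivering the $\eps^2$ rate cleanly and without any case analysis.

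The diameter refinement in the small-$\eps$ regime comes from passing to the log-domain objective $\Psi(\vx) := \sum_{ij} \mA_{ij} e^{x_i - x_j}$, whose Hessian is the weighted graph Laplacian $\mathrm{diag}(r+c) - (\mM + \mM^\top)$ of $G_\mA$. Near balance, Osborne becomes exact Gauss--Seidel on this Laplacian; since a single sweep can only propagate corrections along one edge of $G_\mA$, a Poincar\'e-type inequality tailored to the cyclic sweep on a diameter-$\d$ graph Laplacian should give an $\Omega(1/\d)$-contraction of the imbalance per cycle, strengthening $S \gtrsim \eps \Phi_t / \d$ and hence an $\Omega(\eps/\d)$ per-cycle drop of $\Phi$. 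This refinement is the step I expect to be the main obstacle: standard analyses of cyclic coordinate descent lose factors of $n$ from the ``order effect,'' which is unaffordable here, so the argument must simultaneously exploit the exactness of the Osborne coordinate minimizer, the graph-Laplacian structure of the linearized Hessian, and a cycle-level Lyapunov function aligned with both the cyclic update ordering and graph distance in $G_\mA$.
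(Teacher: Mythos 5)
Your route for the $1/\eps^2$ half is close in spirit to the paper's: your $\ell_1$ bookkeeping bound (one update at $k$ perturbs the other signed imbalances by at most $|r_k-c_k|$ in total) is exactly the mechanism behind the paper's Imbalance Lemma (Lemma~\ref{lem:imbalance}), applied at the start of the cycle rather than the end, and your Engel-form Cauchy--Schwarz plays the role of the paper's case analysis. However, the step $\sum_i(\sqrt{p_i}+\sqrt{q_i})^2\le 4\Phi_t$ is asserted without justification, and it hides the crux: the $p_i,q_i$ are row/column sums taken under \emph{different} intermediate scalings within the cycle, so they need not sum to $O(\Phi_t)$; entries touched by earlier updates get inflated by factors $\sqrt{p_k/q_k}$ that are unbounded in general, which is precisely why the paper's Proposition~\ref{prop:grad-bound} splits coordinates into cases \Circled{1}/\Circled{2} and controls the within-cycle sums via~\eqref{eq:inter-final-bound} only in case \Circled{1}. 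The trivial fallback $p_i+q_i\le 2\Phi_t$ per coordinate loses a factor of $n$ and destroys near-linearity. The gap is repairable (the same one-step computation shows update $k$ inflates the later row-plus-column sums by at most $|p_k-q_k|$ in total, so $\sum_i(p_i+q_i)\le 2\Phi_t+S$, and then Cauchy--Schwarz against the cycle descent, which is at most $\Phi_t$, bootstraps $S=O(\Phi_t)$), but as written the claim that no case analysis is needed is overstated: some argument of this type must be supplied.

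The serious gap is the $\min\{1/\eps,\d\}$ refinement, which is half the theorem and which your proposal does not prove. The Gauss--Seidel/Laplacian/Poincar\'e program is a sketch that you yourself flag as the main obstacle, and the strengthening you posit, $S\gtrsim \eps\Phi_t/\d$, would not suffice even if true: fed through your own Engel inequality it yields a per-cycle drop of order $\eps^2\Phi_t/\d^2$, which is \emph{weaker} than the $\eps^2\Phi_t$ drop you already have, not the required $\Omega(\eps/\d)\,\Phi_t$; moreover there is no reason to expect a per-cycle multiplicative drop of $\Omega(\eps/\d)$ for every non-$\eps$-balanced iterate, and the linearization ``near balance'' is never quantified or shown to be entered in the relevant number of cycles. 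The paper obtains the $\d/\eps$ bound by an entirely different, global argument with no contraction claim: any iterate with $\varphi(\vu)\le\varphi(\bm{0})$ is within $\ell_\infty$-distance $\d\log\kappa$ of a minimizer (Lemma~\ref{lemma:bound-dist}), so convexity of $\log\varphi$ plus H\"older shows an $\eps_i$-balanced iterate has log-potential gap at most $\eps_i\,\d\log\kappa$; applying the $1/\eps_i^2$-type bound (Proposition~\ref{prop:weak}, valid from arbitrary initialization) over a geometric sequence of gap thresholds $a_i$ with matched accuracies $\eps_i=a_i/(\d\log\kappa)$, and using monotonicity of $\varphi$, telescopes to $\cO(\d\log\kappa/\eps)$ cycles. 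Without this (or a correct substitute), your argument establishes only the $\cO(\log\kappa/\eps^2)$ half of the stated theorem.
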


Our result matches or improves the best-known runtime for any variant of Osborne's algorithm, in all problem parameters. In particular, it matches the previous state-of-the-art bound~\citep{altschuler2023near}, which applies only to the less practically effective variant of Osborne's algorithm that uses random (with replacement) choices of update indices $i$, and, further, our result provides a stronger guarantee since it is deterministic while theirs only holds in expectation or with high probability.

\par Note that our runtime is \emph{near}-linear in the input sparsity $m$ only because of a single $\log m$ factor implicit in $\log \kappa$. The result applies to arbitrary input matrices $\mA$ that are balanceable, which by the classical characterization of~\citep{eaves1985line} is equivalent to the graph $G_\mA$ being strongly connected, i.e., having diameter $\d < \infty$. The runtime dependence on the precision $\eps$ automatically accelerates from $\eps^{-2}$ to $\eps^{-1}$ when $\d$ is small, which is a natural quantitative measure of how balanceable the input $\mA$ is. Note that $\d$ is small under mild conditions, e.g., $\d = 1$ if $\mA$ has at least one strictly positive row/column pair, and $\d = \tilde{\cO}(1)$ with high probability if $\mA$ has a random sparsity pattern where each entry is non-zero independently with probability $\Omega((\log n)/n)$~\citep{gilbert1959random}.

We remark that our analysis has several additional benefits. 1) Theorem~\ref{thm:main} holds even when exact arithmetic is replaced by arithmetic on numbers with bit complexity $\cO(\log (n \kappa / \eps))$. This is an exponential improvement over the $\cO(n \log \kappa)$ bit complexity required by the previous state-of-the-art for Osborne's original algorithm~\citep{ostrovsky2017matrix}, and constitutes an additional $\tilde{\cO}(n)$ improvement in runtime beyond the aforementioned $\cO(mn)$ improvement in the number of arithmetic operations. This theoretical finding that Osborne's algorithm is stable with respect to low bit complexity implementations also justifies practical implementations which, since 1969, have rounded entries of $\mD$ to powers of the radix base~\citep{parlett1969balancing}. 2) Our analysis holds for arbitrary orderings of $\{1, \dots, n\}$ in each cycle, even if the orderings change between cycles. This leads to improved runtimes for several variants of Osborne's algorithm as immediate corollaries. As one example, our bound extends to the random-reshuffle variant of cyclic Osborne studied in~\citep{altschuler2023near}, resulting in an $\cO(n)$ improvement for that variant. As another example, our bound extends to parallelized variants of Osborne's algorithm, requiring the same amount of total work as in Theorem~\ref{thm:main}, but improving the runtime (in terms of rounds) from near-linear in the input size $m$ to near-linear in the chromatic number of $G_\mA$ (when given a coloring). See \S\ref{ssec:analysis:discussion} for details on these extensions.

\paragraphsp{Analysis overview.} 
Our starting point is a well-known formulation of matrix balancing as a convex optimization problem and a corresponding interpretation of Osborne's algorithm as an exact coordinate descent algorithm for this optimization problem (see the preliminaries section \S\ref{sec:prelim}). A key benefit of this perspective is that the optimization objective provides a principled way to track the progress of Osborne's algorithm---a non-obvious question since updating the $i$-th index fixes the imbalance in the $i$-th row/column pair, but potentially imbalances all other row/column pairs. 
This convex optimization perspective was at the core of the recent analyses of greedy and randomized variants of Osborne's algorithm \citep{ostrovsky2017matrix,altschuler2023near} and alternative algorithms \citep{kalantari1997complexity,cohen2017matrix,allen2017much}, 
since it allows arguing that each iteration makes progress decreasing the optimization objective, after which one could simply sum the per-iteration progress. 

\par The core challenge for analyzing Osborne's original algorithm---which \emph{cyclically} chooses update indices $i$ rather than greedily or randomly---is that the per-iteration progress is affected by where the selected index appears in the cycle. This is the same conceptual challenge that arises in proving complexity bounds more generally for  cyclic coordinate descent in convex optimization \cite{beck2013convergence,wright2020analyzing,song2021coder,lin2023accelerated,gurbuzbalaban2017cyclic,wright2015coordinate}. Note that none of these past results concerning cyclic algorithms leads to a near-linear runtime for Osborne's algorithm, since their Lipschitz assumptions and worst-case superlinear complexity bounds are incompatible with the convex optimization formulation of matrix balancing and the desired convergence guarantees. (See also the discussion and open problem statement in, e.g.,~\citep{altschuler2023near}.)

Due to the dependencies between iterations in a cycle, previous analyses for cyclic Osborne's algorithm were unable to argue about the progress of a cycle beyond the first (few) iterations in it~\citep{ostrovsky2017matrix,altschuler2023near}, leading to pessimistic runtime bounds that did not account for most of the algorithm's progress. 
In contrast, our analysis obtains near-linear runtimes by directly analyzing the cumulative progress over an entire cycle. %
On a conceptual level, this tracking of aggregate progress over a full cycle is inspired by recent advances in cyclic coordinate optimization methods \cite{song2021coder,chakrabarti2023block,cai2022cyclic,lin2023accelerated}, although on the technical level our result has no obvious connection to this line of work. %

\par Along these lines, a key lemma we show is that after completing a cycle, the imbalance of the currently scaled matrix is bounded by the sum, over indices $i$, of the imbalance $|r_i(\cdot) - c_i(\cdot)|$ of the $i$-th row/column pair at the time it was corrected in that cycle (formal statement in Lemma~\ref{lem:imbalance}). Our analysis then proceeds by considering two cases. If all coordinates $i$ had ``small'' imbalance when updated, then by this lemma, the current matrix has small imbalance, thus Osborne's algorithm would successfully terminate. Otherwise, some coordinates must have had ``large'' imbalance when they were updated, thus those iterations of Osborne's algorithm must have dramatically improved the optimization objective. Since the objective gap between initialization and optimality is bounded, such improvements could not happen too many times, hence Osborne's algorithm could not have had many such iterations before termination. Full details in \S\ref{sec:analysis}. 

\subsection{Further Related Work}\label{sec:related-work}

\paragraphsp{State-of-the-art runtimes for variants of Osborne's algorithm.}
As mentioned above, polynomial runtimes for Osborne's algorithm and its variants were not known until the past decade. Prior to this work, the state-of-the-art bound for Osborne's original algorithm, which cycles through update indices $i \in \{1, \dots, n\}$ in sequential order, was $\cO(mn^2 \eps^{-2} \log \kappa)$ arithmetic operations~\citep{ostrovsky2017matrix}.~\citep{altschuler2023near} showed an improved complexity of $\cO(mn \epsilon^{-1} \log(\kappa)\min\{\epsilon^{-1}, \d\})$ for a \emph{random-reshuffle} variant of cyclic Osborne, which uses randomly permuted orderings in each cycle. A key obstacle for prior work is the analysis of cyclic updating schemes; faster runtimes have been proven for other variants of Osborne's algorithm.~\citep{ostrovsky2017matrix} proved an $\tilde{\cO}(n^2 \eps^{-2} \log \kappa)$ bound on the number of arithmetic operations for variants of Osborne's algorithm that choose update indices $i$ either greedily (by maximizing the current imbalance $(\sqrt{r_i} - \sqrt{c_i})^2$) or randomly (by sampling $i$ with probability proportional to $r_i + c_i$).~\citep{altschuler2023near} showed that choosing indices $i$ uniformly at random achieves a near-linear runtime, requiring only $\cO(m \eps^{-1} \log(\kappa) \min\{ \eps^{-1}, \d\})$ arithmetic operations (in expectation or with high probability). It is worth mentioning that the greedy variant and the state-of-the-art result for Osborne's original algorithm in~\citep{ostrovsky2017matrix} both require operating on numbers with bit complexity $\cO(n \log \kappa)$.~\citep{ostrovsky2017strictly} considered more sophisticated variants of Osborne's algorithm to achieve stricter notions of balancing and showed an $\tilde{\cO}(n^{19} \eps^{-4} \log^4 \kappa)$ runtime bound. %

\paragraphsp{Alternative algorithms.} There is an orthogonal line of research that aims to develop alternative algorithms for matrix balancing by directly solving a convex optimization reformulation. In this direction,~\citep{kalantari1997complexity} proved the first polynomial runtime of $\tilde \cO(n^4\log((\log\kappa)/\epsilon))$ using the Ellipsoid algorithm, and recently~\citep{cohen2017matrix} used Laplacian solvers to achieve a runtime of $\tilde \cO(m^{3/2}\log(\kappa/\epsilon))$ using an interior-point algorithm and a runtime of $\tilde \cO(m \d \log^2(\kappa / \epsilon) \log \kappa)$ using a Newton-type algorithm. See also~\citep{allen2017much} for similar results. However, these algorithms are not practical (at least currently) due to their use of complicated subroutines. Moreover, our result for Osborne's algorithm is also theoretically stronger for practically relevant regimes because (i) for the purpose of matrix preconditioning, matrix balancing is performed to extremely low accuracy $\eps = \cO(1)$~\citep{press2007numerical,parlett1969balancing}, and (ii) our dependence on other parameters is better, namely $\cO(m \log \kappa)$ rather than $\tilde{\cO}(m \d \log^3 \kappa)$ (note the extra factor of $\d$, cubic dependence on $\log \kappa$, and the multiple suppressed logarithms due to the heavy use of Laplacian solvers).

\paragraphsp{Max-balancing.} As mentioned above, the matrix balancing problem can be posed for any norm with which to compare rows and columns. The $\ell_{\infty}$ version of this problem has its own interesting line of work and was also considered in Osborne's original paper~\citep{osborne1960pre}. This problem is sometimes called max-balancing, as it requires balancing the maximum entry in each row/column pair. For this setting, asymptotic convergence for Osborne's algorithm was not proven until~\citep{chen2000balancing}, and only recently did the breakthrough paper~\citep{schulman2017analysis} provide the first polynomial runtime and characterize what matrices have a unique balancing. Their runtime scales as $\cO(mn^2)$ in the input size, which is tight for sparse matrices due to a known $\Omega(n^3)$ lower bound for Osborne's algorithm, at least for high-precision guarantees where $\eps = n^{-\Omega(1)}$~\citep{chen1998balancing}.
Alternative combinatorial algorithms for max-balancing have also been proposed:~\citep{schneider1991max} gave an $\cO(n^4)$ algorithm, and~\citep{young1991faster} gave an $\tilde{\cO}(mn + n^2)$ algorithm. It is worth emphasizing that although $\ell_{\infty}$ balancing might seem syntactically similar to $\ell_p$ balancing for $p < \infty$, the non-smoothness of the max function fundamentally changes many aspects of the problem, including the nature of solutions (e.g., the max-balancing solution is not unique~\citep{chen2000balancing}) and the nature of algorithms (e.g., it appears that analyses of Osborne's algorithm do not generalize across these settings~\citep{schulman2017analysis,ostrovsky2017matrix,altschuler2023near}).

\paragraphsp{Coordinate descent.} While, as noted before, Osborne's algorithm can be interpreted as cyclic coordinate descent performing exact minimization over each selected coordinate, none of the existing results from the optimization literature on cyclic coordinate methods~\citep{beck2013convergence,lee2020inexact,wright2020analyzing,song2021coder,cai2022cyclic,lin2023accelerated,chakrabarti2023block,gurbuzbalaban2017cyclic,wright2015coordinate} lead to the near-linear runtime obtained in this paper. This is for several reasons. One is that most of those analyses rely upon ``gradient-descent-style'' updates rather than exact minimization updates that Osborne's algorithm makes. Another is that those convergence results rely on coordinate smoothness conditions that would, at best, result in polynomial (but crucially superlinear) runtime. For more details and other related work on randomized coordinate methods, we refer to the discussion in~\citep{altschuler2023near}. %

\paragraphsp{Termination criteria.} 
    Throughout, we use the $\ell_1$ termination criterion~\eqref{eq:eps}. This criterion is the focus of previous state-of-the-art results for variants of Osborne's algorithm~\citep{altschuler2023near} as it implies the weaker $\ell_2$ criterion studied in other works (e.g.,~\citep{ostrovsky2017matrix,allen2017much,cohen2017matrix,kalantari1997complexity}, which is motivated by the fact that the numerical stability of eigenvalue calculations depends on the Frobenius norm of the balancing, see~\citep{kalantari1997complexity,osborne1960pre}) and also appears to be more useful in applications due to natural graph theoretic interpretations (such as netflow imbalance) and probabilistic interpretations (such as total variation imbalance), see, e.g.,~\citep[Remark 1]{altschuler2023near} and~\citep[Remarks 2.1 and 5.8]{altschuler2022approximating}. 
    In practice, a standard termination criterion for Osborne's algorithm is $2\sqrt{r_j c_j} > 0.95 (r_j + c_j)$ \citep{parlett1969balancing}, where $r_j, c_j$ are the row/column sums computed within a cycle, right before coordinate $j$ is updated. Observe that this criterion does not actually imply any notion of balancing, since these row/column sums are all computed at different iterations (i.e., with different diagonal scalings). Instead, it appears this condition is used purely for engineering reasons: the within-cycle sums $r_j, c_j$ are already computed when performing the update in Osborne's algorithm (see Algorithm \ref{alg:ccd}), while computing the actual imbalance at the end of the cycle (as in \eqref{eq:eps}) requires an additional full pass over the input, which increases the runtime by a (small) constant factor. Interestingly, our analysis helps justify the use of this common practical criterion, by showing that it implies constant-factor $\ell_1$ balancing as in \eqref{eq:eps} (see the proof of case \Circled{1} in Proposition \ref{prop:grad-bound}). 

\paragraphsp{Sinkhorn's algorithm for matrix scaling.} A related but different problem is matrix scaling: given a non-negative rectangular matrix $\mA \in \R^{m \times n}$, target row sums $\vr$, and target column sums $\vc$, find diagonal matrices $\mD_1$ and $\mD_2$ such that $\mM = \mD_1 \mA \mD_2$ satisfies $r(\mM) = \vr$ and $c(\mM) = \vc$. Just like matrix balancing, matrix scaling has a rich history due to its many applications across multiple communities and has been popularized in large part due to the availability of a simple iterative algorithm that is effective in practice; see the surveys~\citep{idel2016review,peyre2019computational}. That algorithm, often called Sinkhorn's algorithm~\citep{sinkhorn1967diagonal}, has received renewed recent interest due to applications to optimal transport~\citep{cuturi2013sinkhorn} and has also been shown to converge in near-linear time~\citep{altschuler2017near}. Sinkhorn's algorithm solves matrix scaling by iteratively correcting all row sums or all columns sums, which corresponds to alternately performing an exact coordinate descent update over all variables in $\mD_1$, or over all variables in $\mD_2$. Notice that for Sinkhorn's algorithm, updates to the different entries in $\mD_1$ do not affect each other since they affect different row sums, and similarly for $\mD_2$; in contrast, for Osborne's algorithm, every update affects every other update. This is the key reason why Osborne's algorithm requires more sophisticated arguments about \emph{cyclic} coordinate descent, and is why proving a near-linear runtime is substantively more difficult than for Sinkhorn's algorithm. See the survey~\citep{altschuler2022flows} for a further discussion of the connections and differences.

\section{Preliminaries}\label{sec:prelim}

\paragraph{Notation.}

Throughout, $\mA \in \R^{n \times n}$ denotes the matrix to balance. As discussed in the introduction, without loss of generality $\mA$ is assumed to have non-negative entries and zero diagonal. We also assume without loss of generality that $\mA$ is balanceable, since it is well-known that approximately balancing a non-balanceable matrix can be reduced in linear time to approximately balancing balanceable matrices~\citep{chen2000balancing,cohen2017matrix}. 

Throughout, we reserve the notation $m$ for the number of nonzeros in $\mA$, $\kappa = \sum_{ij}\mA_{ij} / \min_{ij: \mA_{ij} \neq 0} \mA_{ij}$ for the conditioning of $\mA$, and $\d$ for the diameter of the graph $G_\mA = ([n], \{(i,j) : \mA_{ij} \neq 0\})$ associated to $\mA$. 
We denote the row and column sums of a matrix $\mA$ by $r(\mA) = \mA\bm{1}$ and $c(\mA) = \mA^\top\bm{1}$, respectively, and write $r_i(\mA) = \sum_{j = 1}^n \mA_{ij}$ and $c_i(\mA) = \sum_{j = 1}^n \mA_{ji}$ for the $i$\textsuperscript{th} row and column sum.
We denote the set $\{1, 2, \dots, n\}$ by $[n]$, and we denote the $i^{\rm th}$ coordinate of a vector $\vu$ by $\vu\bl{i}$.

\paragraphsp{Convex optimization reformulation of matrix balancing.}%

The starting point of our analysis is a well-known formulation of matrix balancing in terms of the following convex optimization problem:
\begin{equation}\label{eq:function}
    \min_{\vu \in \R^n}\Big\{\varphi(\vu) := \sum_{i=1}^n \sum_{j=1}^n e^{\vu\bl{i} - \vu\bl{j}} \mA_{ij}\Big\}.
\end{equation}
To understand this connection, let $\diag(e^{\vu})$ denote the diagonal matrix with entries $ e^{\vu\bl{1}}, e^{\vu\bl{2}}, \dots, e^{\vu\bl{n}}$, and observe that
\begin{align}\label{eq:gradient}
    \nabla \varphi(\vu) = r\big(\diag(e^\vu) \mA \diag(e^{-\vu})\big) - c\big(\diag(e^\vu) \mA \diag(e^{-\vu})\big).
\end{align}
Thus, $\nabla \varphi(\vu) = \mathbf{0}$ if and only if $\diag(e^{\vu})$ balances $\mA$. A similar argument extends to approximate matrix balancing: the $\epsilon$-imbalance as defined in~\eqref{eq:eps} corresponds to the $\epsilon$-stationarity of $\log \varphi(\vu)$ in the $\ell_1$-norm.
These well-known connections are summarized in the following lemma (see, e.g,. \cite{kalantari1997complexity} for proofs).
\begin{lemma}\label{lem:prelim}
For any balanceable matrix $\mA$, the following statements hold.
\begin{enumerate}
    \item The function $\varphi$ is log-convex over $\R^n$.
    \item The diagonal matrix $\diag(e^{\vu_*})$ balances $\mA$ if and only if $\vu_* \in \argmin \varphi$. 
    \item For any $\eps > 0$, the diagonal matrix $\diag(e^{\vu})$ $\epsilon$-balances $\mA$ if and only if $\|\nabla \varphi(\vu)\|_1 / \varphi(\vu) \leq \epsilon$.
\end{enumerate}
\end{lemma}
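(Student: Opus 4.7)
All three parts of the lemma follow from the gradient formula \eqref{eq:gradient} together with standard convex-analytic facts, so my plan is essentially a chain of rewritings carried out in the listed order. The work is conceptual (translating matrix balancing into convex optimization) rather than computational.

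For part~1, I would observe that each summand $\mA_{ij} e^{\vu\bl{i} - \vu\bl{j}}$ is the exponential of an affine function of $\vu$, hence is log-affine, hence log-convex. Since log-convex functions are closed under non-negative addition, $\varphi$ is log-convex. Equivalently, one can verify this by a direct Hessian computation: the Hessian of $\log \varphi$ turns out to be the covariance matrix of the random vector $\ve_i - \ve_j$ under the probability distribution on pairs $(i,j)$ with weights proportional to $\mA_{ij} e^{\vu\bl{i} - \vu\bl{j}}$, and is therefore positive semidefinite. As an immediate consequence, $\varphi$ itself is convex.

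For part~2, I would read off \eqref{eq:gradient} to conclude that $\nabla \varphi(\vu_*) = \vzero$ is equivalent to the row and column sums of $\mM = \diag(e^{\vu_*}) \mA \diag(e^{-\vu_*})$ being equal, i.e., to $\mM$ being balanced. Since $\varphi$ is convex by part~1, the first-order condition $\nabla \varphi(\vu_*) = \vzero$ is both necessary and sufficient for global minimality, so this gives the desired equivalence. The balanceability assumption on $\mA$ is exactly what guarantees $\argmin \varphi$ is nonempty in the first place.

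For part~3, I would combine \eqref{eq:gradient} with the algebraic identity $\varphi(\vu) = \sum_{i,j} \mA_{ij} e^{\vu\bl{i} - \vu\bl{j}} = \sum_{i,j} \mM_{ij}$, which is immediate from the definition of $\mM$. Substituting these two identities into the definition of $\eps$-imbalance in \eqref{eq:eps} gives precisely $\|\nabla \varphi(\vu)\|_1 / \varphi(\vu) \leq \eps$. This also recasts the condition as $\|\nabla \log \varphi(\vu)\|_1 \leq \eps$, which explains the statement in the surrounding text that approximate balancing is exactly $\ell_1$-stationarity of $\log \varphi$.

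\textbf{Main obstacle.} Honestly, there is no real obstacle: the lemma is a dictionary between matrix balancing and convex optimization, and all three parts reduce to bookkeeping once the gradient formula \eqref{eq:gradient} is in hand. The only points requiring mild care are (i) invoking balanceability in part~2 to ensure $\argmin \varphi$ is nonempty (note that $\varphi$ is invariant under the shift $\vu \mapsto \vu + c\vone$, so minimizers are never unique), and (ii) being explicit in part~3 that $\varphi(\vu) > 0$ so that dividing by $\varphi(\vu)$ is legitimate.
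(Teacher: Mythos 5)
Your proof is correct. The paper itself does not prove this lemma but defers to the cited reference \citep{kalantari1997complexity}, and your argument --- log-convexity of $\varphi$ as a sum of exponentials of affine functions (equivalently, $\log\varphi$ as a log-partition function with PSD covariance Hessian), first-order optimality for the convex $\varphi$ combined with the gradient formula \eqref{eq:gradient} for part~2, and the identities $\varphi(\vu)=\sum_{ij}\mM_{ij}$, $\nabla\varphi(\vu)=r(\mM)-c(\mM)$ for part~3 --- is exactly the standard proof these references give, including the correct caveats about nonemptiness of $\argmin\varphi$ under balanceability and positivity of $\varphi$.
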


Our analysis also makes use of the following known helper lemmas, which respectively bound the initial potential and the $\ell_\infty$-distance of nontrivial balancings to minimizers of $\varphi$. See, e.g., \cite[Lemma 3 and Corollary 7]{altschuler2023near} for proofs. 
\begin{lemma}\label{lemma:bound-gap}
    For any balanceable matrix $\mA$, if $\varphi_* = \min_{\vu} \varphi(\vu)$ then $\varphi(\mathbf{0}) / \varphi_* \leq \kappa$.
\end{lemma}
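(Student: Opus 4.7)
The plan is to unfold the definitions and reduce the claim to a cycle-product lower bound on $\varphi_*$. Directly from \eqref{eq:function} one has $\varphi(\mathbf{0}) = \sum_{ij}\mA_{ij}$, and by definition $\kappa = \sum_{ij}\mA_{ij}/m_{\min}$ where $m_{\min} := \min_{ij : \mA_{ij} \neq 0}\mA_{ij}$. Hence it suffices to prove the lower bound $\varphi_* \geq m_{\min}$.

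For this lower bound I would exploit the fact that cycle products are invariant under diagonal similarity. Let $\vu_* \in \argmin \varphi$ (which exists by balanceability and Lemma~\ref{lem:prelim}) and set $\mM = \diag(e^{\vu_*})\mA\diag(e^{-\vu_*})$, so that $\varphi_* = \sum_{ij}\mM_{ij}$. For any directed cycle $i_1 \to i_2 \to \cdots \to i_k \to i_1$ in $G_\mA$, the exponential factors in $\mM_{i_t i_{t+1}} = e^{\vu_*\bl{i_t} - \vu_*\bl{i_{t+1}}}\mA_{i_t i_{t+1}}$ telescope around the cycle, so
\[
    \prod_{t=1}^k \mM_{i_t i_{t+1}} \;=\; \prod_{t=1}^k \mA_{i_t i_{t+1}} \;\geq\; m_{\min}^k.
\]
Since $\mA$ is balanceable, $G_\mA$ is strongly connected and hence contains at least one such directed cycle. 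Restricting the sum defining $\varphi_*$ to these $k$ cycle edges and applying the AM--GM inequality then yields
\[
    \varphi_* \;\geq\; \sum_{t=1}^k \mM_{i_t i_{t+1}} \;\geq\; k\Big(\prod_{t=1}^k \mM_{i_t i_{t+1}}\Big)^{1/k} \;\geq\; k\, m_{\min} \;\geq\; m_{\min},
\]
which is exactly the desired bound. Combining with $\varphi(\mathbf{0}) = \sum_{ij}\mA_{ij}$ gives $\varphi(\mathbf{0})/\varphi_* \leq \sum_{ij}\mA_{ij}/m_{\min} = \kappa$.

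There is essentially no technical obstacle once one recognizes the cycle-product invariance; the argument is two lines of AM--GM and avoids any iterative or spectral reasoning entirely. The only mild subtlety is the existence of a directed cycle in $G_\mA$, which is immediate from strong connectivity (equivalent to balanceability of $\mA$, see, e.g.,~\citep{eaves1985line}).
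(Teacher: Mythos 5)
Your proof is correct, and it is essentially the standard argument: the paper itself defers this lemma to \citep[Lemma 3]{altschuler2023near}, whose proof likewise uses $\varphi(\mathbf{0})=\sum_{ij}\mA_{ij}$ together with a lower bound $\varphi_*\geq \min_{ij:\mA_{ij}\neq 0}\mA_{ij}$ obtained from the invariance of cycle products under diagonal similarity plus AM--GM along a directed cycle of $G_\mA$. (The only cosmetic difference is that one can apply the cycle argument to an arbitrary $\vu$ rather than to a minimizer, which avoids even needing existence of $\vu_*$.)
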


\begin{lemma}\label{lemma:bound-dist}
    If $\vu \in \R^n$ satisfies $\varphi(\vu) \leq \varphi(\mathbf{0})$, then there exists a minimizer $\vu_* \in \argmin \varphi$ such that $\|\vu - \vu_*\|_\infty \leq \d \log \kappa$.
\end{lemma}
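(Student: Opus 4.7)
The plan is to deduce the $\ell_\infty$ bound from a ``spread'' estimate: I will show that any $\vu$ with $\varphi(\vu) \le \varphi(\vzero)$ (in particular any minimizer $\vu_*$) satisfies $\max_i u_i - \min_i u_i \le \d \log \kappa$. Once I have spread bounds for both $\vu$ and $\vu_*$ separately, I will combine them with the translation invariance of $\varphi$ along $\vone$ and use the freedom to shift $\vu_*$ by $c\vone$ to recover the claimed $\d\log\kappa$ bound.

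The spread estimate is obtained by a simple edge-by-edge argument. For any pair $(a,b)$ with $\mA_{ab} > 0$, the quantity $e^{u_a - u_b}\mA_{ab}$ is one of the nonnegative summands in $\varphi(\vu)$, so
\begin{equation*}
    e^{u_a - u_b}\mA_{ab} \;\le\; \varphi(\vu) \;\le\; \varphi(\vzero) \;=\; \sum_{ij}\mA_{ij},
\end{equation*}
whence $u_a - u_b \le \log\bigl(\sum_{ij}\mA_{ij}\big/\mA_{ab}\bigr) \le \log \kappa$ by the definition of $\kappa$. Because $\mA$ is balanceable, $G_\mA$ is strongly connected, so any two indices $i,j$ are joined by a path in $G_\mA$ of length at most $\d$. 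Telescoping the per-edge inequality along such a path from $\argmax_i u_i$ to $\argmin_i u_i$ yields $\max_i u_i - \min_i u_i \le \d \log \kappa$. Applying the same argument to any minimizer $\vu_*$ (which satisfies $\varphi(\vu_*) = \varphi_* \le \varphi(\vzero)$) gives $\max_i u_{*,i} - \min_i u_{*,i} \le \d \log \kappa$ as well.

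Finally, set $\vw = \vu - \vu_*$. By subadditivity of the spread, $\max_i w_i - \min_i w_i \le 2\d \log \kappa$. Translation invariance of $\varphi$ implies that $\vu_* + c\vone$ is a minimizer for every $c \in \R$; picking $c = \tfrac12(\max_i w_i + \min_i w_i)$ to center $\vw - c\vone$ around zero gives
\begin{equation*}
    \|\vu - (\vu_* + c\vone)\|_\infty \;=\; \max_i |w_i - c| \;=\; \tfrac12\bigl(\max_i w_i - \min_i w_i\bigr) \;\le\; \d \log \kappa,
\end{equation*}
which is exactly the claimed bound, with the minimizer in the statement taken to be $\vu_* + c\vone$.

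The only real subtlety (rather than a genuine obstacle) is noticing that a direct combination of the two spread bounds gives only $2\d\log\kappa$, and that the missing factor of $2$ is recovered by centering $\vu_*$ along the direction $\vone$ in which $\varphi$ is constant. The rest is forced by the exponential form of $\varphi$, the definition of $\kappa$, and the shortest-path structure of $G_\mA$; notably, the argument does not need Lemma~\ref{lemma:bound-gap} and works directly from $\varphi(\vu) \le \varphi(\vzero)$.
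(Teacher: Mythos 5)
Your proof is correct and is essentially the same argument that the paper relies on for this lemma (the paper defers the proof to \citep[Lemma 3 and Corollary 7]{altschuler2023near}): bound $u_a - u_b \le \log\kappa$ across each edge of $G_\mA$ via $e^{u_a-u_b}\mA_{ab} \le \varphi(\vu) \le \varphi(\vzero)$, telescope along a path of length at most $\d$ to control the spread of both $\vu$ and a minimizer, and then exploit invariance of $\varphi$ under shifts along $\vone$ to choose the minimizer $\vu_* + c\vone$. Your centering step that recovers the factor of $2$ is a harmless cosmetic variation of the standard alignment (e.g., matching the minima of the two vectors), so nothing is missing.
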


\paragraphsp{Interpretation of Osborne's algorithm as exact coordinate descent.} 
The reformulation of matrix balancing as a convex optimization problem $\min_{\vu} \varphi(\vu)$ enables interpreting Osborne's algorithm as a convex optimization algorithm for minimizing $\varphi$, namely the exact coordinate descent algorithm with cyclic updates. 
 The idea behind this connection is as follows. First, recall that Osborne's algorithm initializes the diagonal scaling to the identity---this corresponds to initializing the optimization algorithm at $\vu_0 = \mathbf{0}$. Then, more substantially, Osborne's algorithm cycles through indices $j \in \{1, \dots, n\}$, updating the $j$\textsuperscript{th} diagonal element of $\mD$ to balance the $j$\textsuperscript{th} row/column sums of $\mD \mA \mD^{-1}$. By~\eqref{eq:gradient}, this corresponds to minimizing $\varphi$ by updating the $j$\textsuperscript{th} coordinate of the current iterate $\vu$ (since $\mD = \diag(e^{\vu})$) while keeping all other coordinates fixed---this is by definition the exact coordinate descent algorithm for minimizing $\varphi$. 

\par This connection is summarized in Algorithm~\ref{alg:ccd}. That pseudocode also defines helpful notation for referring to the iterates of Osborne's algorithm: we let $k$ denote the iteration counter for the cycle (outer loop in Algorithm~\ref{alg:ccd}), $j$ denote the update index (inner loop in Algorithm~\ref{alg:ccd}), and $\vu_{k, j}$ denote the iterate before balancing the $j$\textsuperscript{th} row/column. For shorthand, we also write $\vu_{k} = \vu_{k,1}$ to denote the iterate obtained after completing $k$ cycles. Note that in this notation, the update in Osborne's algorithm for balancing the $j$\textsuperscript{th} row/column pair in cycle $k$ amounts to the exact coordinate descent step 
\begin{align*}
    \vu_{k,j+1} = \argmin_{\vu \in \{\vu_{k,j} + h \ve_j \; : \; h \in \R \}}\varphi(\vu)\,,
\end{align*}
which amounts to updating the $j$\textsuperscript{th} entry via $\vu_{k + 1}\bl{j} = \vu_k\bl{j} + \frac{1}{2}\log c_j(\vu_{k, j}) - \frac{1}{2}\log r_j(\vu_{k, j})$, or equivalently updating the $j$\textsuperscript{th} diagonal entry of $\diag(e^{\vu})$ so that
\begin{equation}\label{eq:osborne-step}
    e^{\vu_{k+1}\bl{j} - \vu_k\bl{j}} = \sqrt{c_j(\vu_{k,j}) / r_j(\vu_{k, j})} \,.
\end{equation}
This is precisely the update step for Osborne's algorithm. For ease of recall, we record the simple fact that this update indeed balances the $j$\textsuperscript{th} row/column sums by making them equal to their geometric mean.

\begin{fact}\label{lem:step}
    Let $\{\vu_{k, j}\}$ 
    denote the iterates of Osborne's algorithm (Algorithm \ref{alg:ccd}). Then for every $k \geq 0$ and $j \in [n],$
     \begin{align*}
         r_j(\vu_{k, j + 1}) = c_j(\vu_{k, j + 1}) = \sqrt{r_j(\vu_{k, j})c_j(\vu_{k, j})}.
     \end{align*}
\end{fact}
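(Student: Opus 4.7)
The statement is a direct verification that the one-coordinate update in Osborne's algorithm exactly equalizes the $j$-th row and column sums at their geometric mean, and my plan is simply to chase through the definitions.

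First I would introduce the shorthand $r := r_j(\vu_{k,j})$ and $c := c_j(\vu_{k,j})$, and let $t := e^{\vu_{k,j+1}\bl{j} - \vu_{k,j}\bl{j}}$, so that by the Osborne update~\eqref{eq:osborne-step} we have $t = \sqrt{c/r}$. The crucial structural observation is that $\vu_{k,j+1}$ and $\vu_{k,j}$ differ only in coordinate $j$; equivalently, letting $\mD = \diag(e^{\vu_{k,j}})$ and $\mD' = \diag(e^{\vu_{k,j+1}})$, one has $\mD'_{ii} = \mD_{ii}$ for all $i \neq j$ and $\mD'_{jj} = t\,\mD_{jj}$.

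Next I would compute the $j$-th row sum of $\mD'\mA(\mD')^{-1}$. Because the diagonal of $\mA$ was zeroed out (an assumption recorded in the introduction), the sum $r_j(\vu_{k,j+1}) = \sum_{i \neq j} e^{\vu_{k,j+1}\bl{j} - \vu_{k,j+1}\bl{i}} \mA_{ji}$ involves only indices $i \neq j$, for which the exponent equals $(\vu_{k,j}\bl{j} + \log t) - \vu_{k,j}\bl{i}$. Pulling out the common factor $t$ gives $r_j(\vu_{k,j+1}) = t\,r$. By the symmetric calculation for the $j$-th column sum, only the $j$-th denominator changes, yielding $c_j(\vu_{k,j+1}) = c/t$. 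Substituting $t = \sqrt{c/r}$ into both expressions yields $r_j(\vu_{k,j+1}) = c_j(\vu_{k,j+1}) = \sqrt{rc}$, which is exactly the claim.

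There is no real obstacle here; the only point deserving emphasis is the role of the zeroed diagonal of $\mA$, since without it the term $\mA_{jj}$ would be unaffected by the scaling (the factors $t$ in the numerator and denominator would cancel) and the clean factorizations $r_j(\vu_{k,j+1}) = t\,r$ and $c_j(\vu_{k,j+1}) = c/t$ would pick up an additive $\mA_{jj}$ correction. Conceptually, the statement is also immediate from the variational characterization in Algorithm~\ref{alg:ccd}: the update performs exact minimization of $\varphi$ along direction $\ve_j$, so the partial derivative $\partial_j \varphi(\vu_{k,j+1}) = r_j(\vu_{k,j+1}) - c_j(\vu_{k,j+1})$ (by~\eqref{eq:gradient}) vanishes, and the common value must be $\sqrt{rc}$ since scaling coordinate $j$ by $t$ multiplies $r_j$ by $t$ and $c_j$ by $1/t$, so their product is invariant under the update.
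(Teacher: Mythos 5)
Your verification is correct and matches the (unstated, but implicit) argument behind the paper's Fact~\ref{lem:step}: the update~\eqref{eq:osborne-step} multiplies the $j$-th row sum by $t=\sqrt{c/r}$ and divides the $j$-th column sum by $t$ (using the zero diagonal), so both become $\sqrt{rc}$. The additional variational remark via exact minimization and invariance of $r_j c_j$ is also sound, so there is nothing to fix.
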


\begin{algorithm}[tb]
\caption{Osborne's algorithm as Cyclic Exact Coordinate Descent}
\label{alg:ccd}
\begin{algorithmic}
   \STATE {\bfseries Input:} matrix $\mA \in \R^{n \times n}_{\geq 0}$, number of cycles $K$
   \STATE{$\vu_0 = \mathbf{0}$}
   \FOR{$k=0$ {\bfseries to} $K - 1$}
   \STATE $\vu_{k, 1} = \vu_{k}$
   \FOR{$j=1$ {\bfseries to} $n$}
   \STATE $\vu_{k + 1}\bl{j} = \vu_k\bl{j} + \frac{1}{2}\log c_j(\vu_{k, j}) - \frac{1}{2}\log r_j(\vu_{k, j})$
   \STATE $\vu_{k, j + 1} = \argmin_{\vu \in \{\vu_{k,j} + h \ve_j \; : \; h \in \R \}}\varphi(\vu) =\big(\vu_{k+1}\bl{1}, \dots, \vu_{k+1}\bl{j}, \vu_k\bl{j + 1}, \dots, \vu_k\bl{n}\big)^\top$
   \ENDFOR
   \ENDFOR
   \STATE Return $\vu_K$
\end{algorithmic}
\end{algorithm}

\section{Near-Linear Runtime of Osborne's Algorithm}\label{sec:analysis}

In this section we establish the claimed near-linear runtime for Osborne's original algorithm, proving Theorem~\ref{thm:main}. See \S\ref{ssec:intro:overview} for a high-level overview of the analysis strategy, in particular how it exploits the convex optimization perspective that Osborne's algorithm is performing exact coordinate descent to minimize $\varphi$, and how we deal with the cyclic nature of these updates.

\subsection{Proof of Theorem~\ref{thm:main}}

Our starting point is a ``descent lemma'' which relates the progress $\varphi(\vu_{k+1}) - \varphi(\vu_k)$ of Osborne's algorithm from a cycle of updates, to each row/column imbalance 
right before it was updated within the cycle. 
This descent lemma is known~\cite{ostrovsky2017matrix}; we provide a short proof for the convenience of the reader. 
        
\begin{lemma}[Descent Lemma]\label{lem:descent}
    Let $\{\vu_{k, j}\}$ denote the iterates of Osborne's algorithm (Algorithm~\ref{alg:ccd}). 
    Then, for all $k \geq 0$ and $j \in [n]$,
    \begin{equation}\notag
    \begin{aligned}
        \varphi(\vu_{k, j + 1}) - \varphi(\vu_{k, j}) 
        = - \Big(\sqrt{r_j(\vu_{k, j})} - \sqrt{c_j(\vu_{k, j})}\Big)^2. 
    \end{aligned}
    \end{equation}
    Hence, for all $k \geq 0,$ the descent of the potential over the full cycle is
    \begin{equation}\notag
    \begin{aligned}
        \varphi(\vu_{k+1}) - \varphi(\vu_k) 
        = - \sum_{j = 1}^n \Big(\sqrt{r_j(\vu_{k, j})} - \sqrt{c_j(\vu_{k, j})}\Big)^2. 
    \end{aligned}
    \end{equation}
\end{lemma}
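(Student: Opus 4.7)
The plan is to compute the single-step potential change $\varphi(\vu_{k,j+1}) - \varphi(\vu_{k,j})$ directly from the definition of $\varphi$, exploiting the fact that $\vu_{k,j+1}$ and $\vu_{k,j}$ differ only in coordinate $j$, and then obtain the full-cycle statement by telescoping.

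First, fix $k$ and $j$, and write $h := \vu_{k,j+1}\bl{j} - \vu_{k,j}\bl{j} = \tfrac{1}{2}\log c_j(\vu_{k,j}) - \tfrac{1}{2}\log r_j(\vu_{k,j})$, so that $e^h = \sqrt{c_j(\vu_{k,j})/r_j(\vu_{k,j})}$. Now inspect which terms $e^{\vu\bl{i} - \vu\bl{\ell}} \mA_{i\ell}$ in the sum defining $\varphi$ actually change when only the $j$-th coordinate is shifted by $h$. The terms with $i = j$ are multiplied by $e^h$, contributing $(e^h - 1)\, r_j(\vu_{k,j})$ to the difference; the terms with $\ell = j$ are multiplied by $e^{-h}$, contributing $(e^{-h}-1)\, c_j(\vu_{k,j})$. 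The (single) term with $i = \ell = j$ would double-count, but it vanishes because $\mA_{jj} = 0$ by the zero-diagonal convention, so there is nothing to subtract off. All other terms are unchanged.

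Plugging in $e^h = \sqrt{c_j/r_j}$ (where I drop the arguments for brevity) gives $e^h r_j = \sqrt{r_j c_j} = e^{-h} c_j$, hence
\begin{equation*}
\varphi(\vu_{k,j+1}) - \varphi(\vu_{k,j}) \;=\; 2\sqrt{r_j(\vu_{k,j}) c_j(\vu_{k,j})} - r_j(\vu_{k,j}) - c_j(\vu_{k,j}) \;=\; -\bigl(\sqrt{r_j(\vu_{k,j})} - \sqrt{c_j(\vu_{k,j})}\bigr)^2,
\end{equation*}
which is the first claim. The full-cycle statement follows immediately by telescoping over $j = 1, \dots, n$, using $\vu_{k,1} = \vu_k$ and $\vu_{k,n+1} = \vu_{k+1}$ from Algorithm~\ref{alg:ccd}.

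There is no real obstacle: the only thing to be a little careful about is the bookkeeping of which terms in the double sum defining $\varphi$ are affected by updating coordinate $j$, and in particular that the zero-diagonal assumption on $\mA$ is what allows one to cleanly separate row-$j$ contributions from column-$j$ contributions without inclusion-exclusion. (Without that assumption one would pick up an extra $(e^0 - 1)\mA_{jj} = 0$ term anyway, but with it the decomposition is transparent.) The rest is algebra: the perfect square identity $a + b - 2\sqrt{ab} = (\sqrt{a}-\sqrt{b})^2$ is exactly what lands the bound in the desired form, and this is precisely the reason Osborne's geometric-mean update $e^h = \sqrt{c_j/r_j}$ is the exact coordinate minimizer of $\varphi$ along the $j$-th axis.
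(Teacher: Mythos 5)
Your proof is correct and follows essentially the same route as the paper: identify that only the $j$-th row/column terms of $\varphi$ change, observe that the geometric-mean update sends $r_j + c_j$ to $2\sqrt{r_j c_j}$ (your explicit computation with $e^h$ is just the proof of Fact~\ref{lem:step}, which the paper invokes), and telescope over the cycle. No gaps.
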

\begin{proof}
    For the first claim, observe that
    \begin{align*}
         \varphi(\vu_{k, j+1}) - \varphi(\vu_{k, j}) 
         &= r_j(\vu_{k, j + 1}) + c_j(\vu_{k, j + 1}) - r_j(\vu_{k, j}) - c_j(\vu_{k, j})
         \\ &= 2\sqrt{r_j(\vu_{k, j})c_j(\vu_{k, j})} - r_j(\vu_{k, j}) - c_j(\vu_{k, j}) 
         \\ &=- \Big(\sqrt{r_j(\vu_{k, j})} - \sqrt{c_j(\vu_{k, j})}\Big)^2.
    \end{align*}
    Above, the first step is because  $\vu_{k, j+1}$ and $\vu_{k, j}$ only differ on the $j$\textsuperscript{th} coordinate, which only affects the $j$\textsuperscript{th} row/column sums in $\varphi(\vu) = \sum_{ij} (\diag(e^{\vu}) \mA \diag(e^{-\vu}))_{ij}$. The second step is by Fact~\ref{lem:step}, which ensures that $r_j(\vu_{k, j + 1}) = c_j(\vu_{k, j + 1}) = \sqrt{r_j(\vu_{k, j})c_j(\vu_{k, j})}$ after balancing the $j$\textsuperscript{th} row/column sum. 
    \par For the second claim, re-write the cycle's progress as the telescoping sum
    \begin{equation*}
        \varphi(\vu_{k+1}) - \varphi(\vu_k) = \sum_{j=1}^n \big(\varphi(\vu_{k, j+1}) - \varphi(\vu_{k, j})\big)\,,
    \end{equation*}
    and then apply the first claim to each summand.
\end{proof}

The rest of our analysis is new. Although the above descent lemma was previously known, a key difficulty for using it is that the ``descent'' involves row/column imbalances at different times during the cycle, each corresponding to a different solution vector $\vu_{k, j}$. Previous work was unable to control the behavior at different iterations within the cycle since they affect each other, leading to analyses that only made use of progress from the first (few) iterations, thus missing out on the bulk of the progress of Osborne's algorithm in each cycle, and ultimately leading to pessimistic runtime bounds.

\par The following key lemma enables us to argue about all iterations within the cycle. Specifically, this lemma bounds the row/column imbalance $\|\nabla \varphi(\vu_{k + 1})\|_1$ at the end of a cycle, by the imbalance of each row/column pair right before it was updated in the cycle. The former is relevant for the termination criteria for Osborne's algorithm, and the latter can be related to the within-cycle row/column imbalance in the descent lemma, as described below.

For the proof, it is helpful to define the following notational shorthand for partial row sums and partial column sums: for $j \in [n]$ and $1 \leq s \leq t \leq n$, let
\begin{equation}\label{eq:partial-row-column-sums}
\begin{aligned}
    & r_j^{s:t}(\vu) := 
    \sum_{i=s}^t \big(\diag(e^\vu) \mA \diag(e^{-\vu})\big)_{ji}
    =
    \sum_{i=s}^t e^{\vu\bl{j} - \vu\bl{i}}\mA_{ji}\,, 
    \\
    & c_j^{s:t}(\vu) :=  \sum_{i=s}^t \big(\diag(e^\vu) \mA \diag(e^{-\vu})\big)_{ij} = \sum_{i=s}^t e^{\vu\bl{i} - \vu\bl{j}}\mA_{ij}\,. 
\end{aligned}
\end{equation}
Note in particular that since $\mA_{ii} = 0$,
\begin{equation}\label{eq:partial-row-column-sums-2}
\begin{aligned}
    r_j(\vu) = r_j^{1:j-1}(\vu) + r_j^{j+1:n}(\vu)\,, & \\
    c_j(\vu) = c_j^{1:j-1}(\vu) + c_j^{j+1:n}(\vu)\,.
\end{aligned}
\end{equation}

\begin{lemma}[Imbalance Lemma]\label{lem:imbalance}
     Let $\{\vu_{k, j}\}$ denote the iterates of Osborne's algorithm (Algorithm~\ref{alg:ccd}). Then, for every cycle $k \geq 0$,
    \begin{equation}\label{eq:imbalance-lem}
        \|\nabla \varphi(\vu_{k+1})\|_1 
        \leq \sum_{j=2}^n \big|r_j(\vu_{k, j}) - c_j(\vu_{k, j})\big|. 
    \end{equation}
\end{lemma}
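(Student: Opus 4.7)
The plan is to exploit Fact~\ref{lem:step}---immediately after coordinate $j$ is balanced at $\vu_{k,j+1}$ one has $r_j(\vu_{k,j+1}) = c_j(\vu_{k,j+1})$---so that any final imbalance $g_j := r_j(\vu_{k+1}) - c_j(\vu_{k+1})$ must accumulate entirely from the subsequent updates of coordinates $\ell > j$ within the cycle. I will therefore telescope
\[
g_j \;=\; \sum_{\ell = j+1}^{n} \Delta_\ell^{(j)}, \qquad \Delta_\ell^{(j)} \;:=\; \bigl[r_j - c_j\bigr](\vu_{k,\ell+1}) - \bigl[r_j - c_j\bigr](\vu_{k,\ell}).
\]
Since updating coordinate $\ell$ alters only the scaling associated with the $\ell$-th row and column, only the $i = \ell$ summand in $r_j(\vu)$ and in $c_j(\vu)$ actually changes. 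Writing $\delta_\ell := \tfrac{1}{2}\log(c_\ell(\vu_{k,\ell}) / r_\ell(\vu_{k,\ell}))$ for the Osborne step size from Algorithm~\ref{alg:ccd}, this yields the explicit formula
\[
\Delta_\ell^{(j)} \;=\; (e^{-\delta_\ell}-1)\, e^{\vu_{k,\ell}\bl{j} - \vu_{k,\ell}\bl{\ell}} \mA_{j\ell} \;-\; (e^{\delta_\ell}-1)\, e^{\vu_{k,\ell}\bl{\ell} - \vu_{k,\ell}\bl{j}} \mA_{\ell j}.
\]

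The second step is to factor this expression cleanly. Substituting $e^{\delta_\ell} = \sqrt{c_\ell(\vu_{k,\ell})/r_\ell(\vu_{k,\ell})}$ and pulling the common imbalance scalar out front gives
\[
\Delta_\ell^{(j)} \;=\; \bigl(\sqrt{r_\ell(\vu_{k,\ell})} - \sqrt{c_\ell(\vu_{k,\ell})}\bigr)\,\biggl[\frac{e^{\vu_{k,\ell}\bl{j} - \vu_{k,\ell}\bl{\ell}} \mA_{j\ell}}{\sqrt{c_\ell(\vu_{k,\ell})}} + \frac{e^{\vu_{k,\ell}\bl{\ell} - \vu_{k,\ell}\bl{j}} \mA_{\ell j}}{\sqrt{r_\ell(\vu_{k,\ell})}}\biggr],
\]
where the bracketed factor is uniformly non-negative. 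This decoupling of an $\ell$-dependent imbalance scalar from a non-negative $j$-indexed tail is the central algebraic step, and I expect this factorization to be the main technical obstacle; everything else is accounting.

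Finally, the plan is to assemble the pieces. The triangle inequality together with swapping the order of summation gives
\[
\|\nabla \varphi(\vu_{k+1})\|_1 \;=\; \sum_{j=1}^n |g_j| \;\leq\; \sum_{j=1}^n \sum_{\ell=j+1}^n |\Delta_\ell^{(j)}| \;=\; \sum_{\ell=2}^n \sum_{j=1}^{\ell-1} |\Delta_\ell^{(j)}|.
\]
For each outer index $\ell$, I will enlarge the inner index set from $\{j<\ell\}$ to $\{j \neq \ell\}$ (every summand is non-negative) and then invoke the identities $\sum_{j \neq \ell} e^{\vu_{k,\ell}\bl{j} - \vu_{k,\ell}\bl{\ell}} \mA_{j\ell} = c_\ell(\vu_{k,\ell})$ and $\sum_{j \neq \ell} e^{\vu_{k,\ell}\bl{\ell} - \vu_{k,\ell}\bl{j}} \mA_{\ell j} = r_\ell(\vu_{k,\ell})$, both valid because $\mA_{\ell\ell}=0$. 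The bracketed factor then collapses to $\sqrt{c_\ell(\vu_{k,\ell})} + \sqrt{r_\ell(\vu_{k,\ell})}$, and the identity $|\sqrt{r_\ell}-\sqrt{c_\ell}|(\sqrt{r_\ell}+\sqrt{c_\ell}) = |r_\ell - c_\ell|$ produces exactly the stated bound. Note that the outer sum naturally begins at $\ell = 2$, because for $\ell = 1$ there is no $j < \ell$---which is precisely why the lemma's right-hand side starts at $j = 2$.
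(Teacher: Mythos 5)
Your proposal is correct and follows essentially the same argument as the paper's proof: both attribute the end-of-cycle imbalance of row/column $j$ entirely to the within-cycle updates of coordinates $\ell > j$ (using Fact~\ref{lem:step}), swap the order of summation, bound the resulting partial row/column sums by full sums, and invoke the Osborne update~\eqref{eq:osborne-step} together with the factorization $|r_\ell - c_\ell| = |\sqrt{r_\ell}-\sqrt{c_\ell}|(\sqrt{r_\ell}+\sqrt{c_\ell})$. Your telescoping over $\ell$ with explicit per-update increments is just a notational variant of the paper's partial-sum bookkeeping in~\eqref{eq:partial-row-column-sums}--\eqref{eq:imbalance-decompose}, and every step checks out.
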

\begin{proof}
    Fix any $j \in [n]$. Since $\vu_{k,j+1}$ is constructed by balancing the $j$\textsuperscript{th} row and column, Fact~\ref{lem:step} ensures that
    \begin{align*}
        r_j(\vu_{k,j+1}) = c_j(\vu_{k,j+1})\,.
    \end{align*}
    By~\eqref{eq:partial-row-column-sums-2} and the fact that $\vu_{k,j+1}^{(i)} = \vu_{k+1}^{(i)}$ for all coordinates $i \leq j$ (see the last line of Algorithm~\ref{alg:ccd}),
    \begin{align*}
        r_j(\vu_{k,j+1}) 
        &=
        r_j^{1:j-1}(\vu_{k,j+1}) 
        + 
        r_j^{j+1:n}(\vu_{k,j+1}) 
        =
        r_j^{1:j-1}(\vu_{k+1}) 
        + 
        r_j^{j+1:n}(\vu_{k,j+1})\,,
    \end{align*}
    By an analogous argument for the column sums,
    \begin{align*}
    c_j(\vu_{k,j+1}) 
        &=
        c_j^{1:j-1}(\vu_{k,j+1}) 
        + 
        c_j^{j+1:n}(\vu_{k,j+1}) 
        =
        c_j^{1:j-1}(\vu_{k+1}) 
        + 
        c_j^{j+1:n}(\vu_{k,j+1})\,.
    \end{align*}
    Combining the above three displays and re-arranging gives
    \begin{align}\label{eq:imbalance-1}
        r_j^{1:j-1}(\vu_{k+1}) 
        -
        c_j^{1:j-1}(\vu_{k+1})
        = 
        c_j^{j+1:n}(\vu_{k,j+1}) - r_j^{j+1:n}(\vu_{k,j+1})\,.
    \end{align}

    \par Now, by applying in order~\eqref{eq:gradient}, then~\eqref{eq:partial-row-column-sums-2}, and then~\eqref{eq:imbalance-1}, the $j$\textsuperscript{th} coordinate of the gradient $\nabla \varphi$ after the completion of cycle $k$ is equal to
    \begin{align}
       &\;  \nabla_j \varphi(\vu_{k+1}) \nonumber \\
        =\; &
        r_j(\vu_{k+1}) - c_j(\vu_{k+1}) \nonumber
        \\ =\;& r_j^{1: j - 1}(\vu_{k+1}) + r_j^{j + 1: n}(\vu_{k+1})
        -
        c_j^{1: j - 1}(\vu_{k+1}) - c_j^{j + 1: n}(\vu_{k+1}) \nonumber
        \\ =\; & \big(r_j^{j+1:n}(\vu_{k+1}) - r_j^{j+1:n}(\vu_{k, j + 1})\big) - \big(c_j^{j+1:n}(\vu_{k+1}) - c_j^{j+1:n}(\vu_{k, j + 1})\big). \label{eq:imbalance-decompose}
    \end{align}
    Thus, by using in order the definition of the $\ell_1$ norm, the above display and the definition of the partial row/column sums, the triangle inequality, swapping sums, and then again the definition of the partial row/column sums, we have 
     \begin{align*}
        \;& \|\nabla \varphi(\vu_{k+1})\|_1 
        \\  = \;& \sum_{j=1}^n |\nabla_j \varphi(\vu_{k+1})|
        \\  = \;& \sum_{j=1}^n \Bigg|\sum_{i=j+1}^n \Big(\big(e^{\vu_{k}\bl{i} - \vu_{k + 1}\bl{i}} - 1\big)e^{\vu_{k+1}\bl{j} - \vu_k\bl{i} }\mA_{ji} - \big(e^{\vu_{k + 1}\bl{i} - \vu_{k}\bl{i}} - 1\big)e^{\vu_{k}\bl{i} - \vu_{k+1}\bl{j}}\mA_{ij}\Big)\Bigg|
        \\
        \leq \;& \sum_{j=1}^{n} \sum_{i=j+1}^n \Big(\big|e^{\vu_{k}\bl{i} - \vu_{k + 1}\bl{i}} - 1\big|e^{\vu_{k+1}\bl{j} - \vu_k\bl{i} }\mA_{ji} + \big|e^{\vu_{k + 1}\bl{i} - \vu_{k}\bl{i}} - 1\big|e^{\vu_{k}\bl{i} - \vu_{k+1}\bl{j}}\mA_{ij}\Big) \\
       = \;& \sum_{i=2}^n \Big( \big|e^{\vu_{k}\bl{i} - \vu_{k + 1}\bl{i}} - 1\big| \sum_{j = 1}^{i-1} e^{\vu_{k+1}\bl{j} - \vu_k\bl{i} }\mA_{ji} + \big|e^{\vu_{k + 1}\bl{i} - \vu_{k}\bl{i}} - 1\big|\sum_{j = 1}^{i-1} e^{\vu_{k}\bl{i} - \vu_{k+1}\bl{j}}\mA_{ij} \Big) \\
        =\; & \sum_{i=2}^n \Big(\big|e^{\vu_{k}\bl{i} - \vu_{k + 1}\bl{i}} - 1\big| c_i^{1: i - 1}(\vu_{k, i}) + \big|e^{\vu_{k + 1}\bl{i} - \vu_{k}\bl{i}} - 1\big| r_i^{1: i - 1}(\vu_{k, i})\Big). 
    \end{align*}
    Now crudely bound $c_i^{1:i-1}(\vu_{k, i}) \leq c_i(\vu_{k, i})$, $r_i^{1:i-1}(\vu_{k, i}) \leq r_i(\vu_{k, i})$ and recall that $e^{\vu_{k + 1}\bl{i} - \vu_{k}\bl{i}} = \sqrt{c_i(\vu_{k, i})/ r_i(\vu_{k, i})}$ by the Osborne update~\eqref{eq:osborne-step}. Thus
    \begin{align*}
        \; &\|\nabla \varphi(\vu_{k+1})\|_1\\
        \leq \; & \sum_{i=2}^n\Big( \Big|\sqrt{r_i(\vu_{k, i})} - \sqrt{c_i(\vu_{k, i})}\Big|\sqrt{c_i(\vu_{k, i})} + \Big|\sqrt{r_i(\vu_{k, i})} - \sqrt{c_i(\vu_{k, i})}\Big|\sqrt{r_i(\vu_{k, i})}\Big)\\
        =\; &\sum_{i=2}^n |{r_i(\vu_{k, i})} - {c_i(\vu_{k, i})}|,
    \end{align*}
    completing the proof.
\end{proof}

Next we establish that Osborne's algorithm finds a ``good'' balancing every $K$ cycles, where ``good'' is quantified by how large $K$ is. The key insight is a way to combine the descent lemma (Lemma~\ref{lem:descent}) with the imbalance lemma (Lemma~\ref{lem:imbalance}). Recall that the latter lemma shows that the imbalance $\|\nabla \varphi(\vu_{k+1})\|_1$ at the end of a cycle $k$ is bounded by the imbalance of each row/column pair $j$ at the intermediate iterate $\vu_{k,j}$, right before it was updated in the cycle. At a conceptual level, we argue that either: $\Circled{1}$ all such imbalances are small, in which case the imbalance lemma ensures that $\vu_{k+1}$ is a good balancing; otherwise $\Circled{2}$ some row/column imbalances during the cycle were large, in which case Osborne's algorithm must have made significant progress when it corrected those imbalances, hence the objective function $\varphi$ drops substantially by the descent lemma. 
More precisely, the two cases are codified by comparing the $\ell_1$-type imbalance quantities $|r_j(\vu_{k, j}) - c_j(\vu_{k, j})| = |\sqrt{r_j(\vu_{k, j})} - \sqrt{c_j(\vu_{k, j})}| \cdot |\sqrt{r_j(\vu_{k, j})} + \sqrt{c_j(\vu_{k, j})}|$ in the imbalance lemma with the $\ell_{1/2}$-type imbalance quantities $(\sqrt{r_j(\vu_{k, j})} - \sqrt{c_j(\vu_{k, j})})^2$ in the descent lemma, namely by considering whether $|\sqrt{r_j(\vu_{k, j})} - \sqrt{c_j(\vu_{k, j})}|$ is much smaller than $\sqrt{r_j(\vu_{k, j})} + \sqrt{c_j(\vu_{k, j})}$ or not.

\begin{proposition}\label{prop:grad-bound}
For any $\ebar \in (0,1)$, number of cycles $K' \geq 1$, and initialization $\vu_0 \in \R^n$ (not necessarily zero), there exists $t \in [K']$ such that
\begin{align*}
    \frac{\|\nabla \varphi(\vu_{t})\|_1}{\varphi(\vu_{t})} \leq \;& 
    \ebar + \frac{10\varphi(\vu_0)}{\ebar K' \varphi(\vu_{K'})}.
\end{align*}
\end{proposition}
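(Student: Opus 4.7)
The strategy is to establish a per-cycle descent-style inequality and then conclude by telescoping and pigeonhole. Specifically, I aim to show that for every $k \geq 0$,
\[
\|\nabla \varphi(\vu_{k+1})\|_1 \;\leq\; \ebar\, \varphi(\vu_{k+1}) \;+\; \frac{10}{\ebar}\big(\varphi(\vu_k) - \varphi(\vu_{k+1})\big).
\]
Given this, dividing by $\varphi(\vu_{k+1}) \geq \varphi(\vu_{K'})$, summing over $k = 0, \ldots, K'-1$, and telescoping $\sum_k (\varphi(\vu_k)-\varphi(\vu_{k+1})) \leq \varphi(\vu_0)$ yields $\sum_{t=1}^{K'} \|\nabla \varphi(\vu_t)\|_1/\varphi(\vu_t) \leq \ebar K' + 10\,\varphi(\vu_0)/(\ebar\, \varphi(\vu_{K'}))$, and the pigeonhole principle then produces an index $t \in [K']$ satisfying the proposition.

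To establish the per-cycle inequality, I would begin from the Imbalance Lemma, $\|\nabla\varphi(\vu_{k+1})\|_1 \leq N_k := \sum_{j=1}^{n} |r_j(\vu_{k,j}) - c_j(\vu_{k,j})|$, and carry out the per-index case split sketched in the paper: write $|r_j - c_j| = |\sqrt{r_j}-\sqrt{c_j}|(\sqrt{r_j}+\sqrt{c_j})$ (all evaluated at $\vu_{k,j}$) and, for a threshold $\alpha$, distinguish ``small'' indices where $|\sqrt{r_j}-\sqrt{c_j}| \leq \alpha(\sqrt{r_j}+\sqrt{c_j})$ from the rest. For small indices, $|r_j-c_j| \leq \alpha(\sqrt{r_j}+\sqrt{c_j})^2 \leq 2\alpha(r_j+c_j)$; for large indices, $|r_j-c_j| \leq (\sqrt{r_j}-\sqrt{c_j})^2/\alpha$. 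Summing over all $j$ and applying the Descent Lemma to the large-term contribution produces
\[
N_k \;\leq\; 2\alpha \sum_{j=1}^{n} s_j \;+\; \frac{1}{\alpha}\big(\varphi(\vu_k)-\varphi(\vu_{k+1})\big), \qquad s_j := r_j(\vu_{k,j}) + c_j(\vu_{k,j}).
\]

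The main obstacle, and the heart of the argument, is controlling the cumulative within-cycle sum $\sum_j s_j$: each $s_j$ is evaluated at a different intermediate iterate, and the updates in the cycle affect one another. My plan is to derive the identity
\[
\sum_{j=1}^n r_j(\vu_{k,j}) \;=\; \varphi(\vu_k) \;+\; \sum_{i=1}^{n}\big(e^{-\gamma_i}-1\big)\, c_i^{i+1:n}(\vu_k),
\]
(with $\gamma_i := \vu_{k+1}^{(i)}-\vu_k^{(i)}$) and its column analogue $\sum_j c_j(\vu_{k,j}) = \varphi(\vu_k) + \sum_i (e^{\gamma_i}-1)\, r_i^{i+1:n}(\vu_k)$. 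Both identities follow by splitting each $r_j(\vu_{k,j})$ along coordinates $i < j$ versus $i > j$ using $\vu_{k,j}^{(i)} = \vu_{k+1}^{(i)}$ for $i<j$ and $\vu_{k,j}^{(i)} = \vu_k^{(i)}$ for $i \geq j$, factoring $e^{\vu_k^{(j)}-\vu_{k+1}^{(i)}} = e^{\vu_k^{(j)}-\vu_k^{(i)}}\, e^{-\gamma_i}$, and then exchanging the order of summation so that the pieces without the $e^{\pm\gamma_i}$ correction reassemble into $\varphi(\vu_k)$. The error terms are then bounded using $|e^{\pm\gamma_i}-1| = |\sqrt{r_i(\vu_{k,i})}-\sqrt{c_i(\vu_{k,i})}|/\sqrt{c_i(\vu_{k,i})}$ or $/\sqrt{r_i(\vu_{k,i})}$ (from the Osborne step~\eqref{eq:osborne-step}) combined with the trivial partial-sum bounds $c_i^{i+1:n}(\vu_k) \leq c_i(\vu_{k,i})$ and $r_i^{i+1:n}(\vu_k) \leq r_i(\vu_{k,i})$, both immediate from~\eqref{eq:partial-row-column-sums-2} once one writes out $c_i(\vu_{k,i}) = c_i^{i+1:n}(\vu_k) + \sum_{j<i}\mA_{ji}e^{\vu_{k+1}^{(j)}-\vu_k^{(i)}}$ and notes the extra summand is nonnegative. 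Summing collapses the two error terms into $\sum_i |\sqrt{r_i(\vu_{k,i})}-\sqrt{c_i(\vu_{k,i})}|(\sqrt{r_i(\vu_{k,i})}+\sqrt{c_i(\vu_{k,i})})= N_k$, giving the sought cumulative bound $\sum_j s_j \leq 2\varphi(\vu_k) + N_k$.

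Substituting this back into the per-index inequality produces the implicit relation $N_k \leq 2\alpha\big(2\varphi(\vu_k) + N_k\big) + (1/\alpha)(\varphi(\vu_k) - \varphi(\vu_{k+1}))$. Choosing $\alpha = \ebar/(4+2\ebar)$ balances the coefficients so that $4\alpha/(1-2\alpha) = \ebar$, and decomposing $\varphi(\vu_k) = \varphi(\vu_{k+1}) + (\varphi(\vu_k)-\varphi(\vu_{k+1}))$ isolates $N_k$ and yields the target per-cycle bound with constant $10$ for $\ebar \in (0,1)$. The crux is exactly this cumulative-bound step: prior analyses were defeated by the entangled drift of the within-cycle row and column sums, and the identity above is what cleanly separates the principal $\varphi(\vu_k)$ contribution from an error term that is self-referentially absorbed by $N_k$, thereby closing the loop.
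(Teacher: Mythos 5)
Your proposal is correct, and the overall scaffolding matches the paper's: the imbalance lemma, the threshold-$\alpha$ case split, the descent lemma for large-imbalance coordinates, and an averaging/pigeonhole step (you prove a per-cycle bound for every $k$ and pigeonhole at the end; the paper selects a good cycle by averaging up front — these are equivalent). The one step where you genuinely diverge is how the within-cycle sums $\sum_j \big(r_j(\vu_{k,j}) + c_j(\vu_{k,j})\big)$ are controlled. The paper bounds each \emph{case-\Circled{1}} coordinate individually via \eqref{eq:inter-final-bound}, i.e.\ $r_j(\vu_{k,j}) \le \sqrt{r_j/c_j}\,r_j(\vu_{k+1}) + r_j(\vu_k)$, and uses the case-\Circled{1} condition to bound the ratio by $\tfrac{1+\alpha}{1-\alpha}$, yielding $4\alpha\big[\varphi(\vu_k) + \tfrac{1+\alpha}{1-\alpha}\varphi(\vu_{k+1})\big]$. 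You instead prove the exact aggregate identity $\sum_j r_j(\vu_{k,j}) = \varphi(\vu_k) + \sum_i (e^{-\gamma_i}-1)c_i^{i+1:n}(\vu_k)$ (and its column analogue), bound the error by $N_k$ itself using the Osborne step and the partial-sum bounds, and close the loop by absorbing $2\alpha N_k$ into the left-hand side. I checked the details: the coordinate bookkeeping ($\vu_{k,j}\bl{i} = \vu_{k+1}\bl{i}$ for $i<j$, $=\vu_k\bl{i}$ for $i\ge j$), the identification of the inner sums with $c_i^{i+1:n}(\vu_k)$ and $r_i^{i+1:n}(\vu_k)$, the bound $c_i^{i+1:n}(\vu_k)\le c_i(\vu_{k,i})$, and the constants ($\alpha = \ebar/(4+2\ebar) < 1/2$ so absorption is valid, and $\ebar + (4+2\ebar)^2/(4\ebar) \le 10/\ebar$ for $\ebar\in(0,1)$) all go through, giving exactly the claimed per-cycle inequality and hence the proposition. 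What each approach buys: the paper's per-coordinate route only needs to touch case-\Circled{1} coordinates and feeds directly into the final combination without any self-referential step; your aggregate identity avoids the $\tfrac{1+\alpha}{1-\alpha}$ ratio bound entirely and treats all coordinates uniformly, at the mild price of requiring $\alpha<1/2$ for the absorption. It is worth noting that your error terms $|e^{\pm\gamma_i}-1|$ times partial sums are precisely the quantities appearing inside the paper's proof of Lemma~\ref{lem:imbalance}, so your argument in effect reuses that computation a second time rather than introducing a new mechanism.
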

\begin{proof}
By an averaging argument, there exists $k \in \{0, 1, \dots, K' - 1\}$ satisfying
\begin{align}\label{eq:descent-k}
    \varphi(\vu_{k}) - \varphi(\vu_{k + 1})
    \leq \frac{1}{K'} \sum_{s = 0}^{K' - 1}\big(\varphi(\vu_{s}) - \varphi(\vu_{s + 1})\big)
    =
    \frac{\varphi(\vu_0) - \varphi(\vu_{K'})}{K'}\,.
\end{align}
We will show that the claimed bound holds for $t = k+1$. By Lemma~\ref{lem:imbalance},
\begin{align}\label{eq:grad-total}
     \|\nabla \varphi(\vu_{k+1})\|_1 \leq \sum_{j=2}^n \big|r_j(\vu_{k, j}) - c_j(\vu_{k, j})\big|.
\end{align}
To bound these summands, let $\alpha > 0$ be an analysis parameter (chosen below) and consider the following two cases of row/column imbalance for each coordinate $j \in [n]$:
\begin{enumerate}[label=\Circled{\arabic*},font=\sffamily]
    \item $\big|\sqrt{r_j(\vu_{k, j})} - \sqrt{c_j(\vu_{k, j})}\big| < \alpha \big(\sqrt{r_j(\vu_{k, j})} + \sqrt{c_j(\vu_{k, j})}\big),$ 
    \item $\big|\sqrt{r_j(\vu_{k, j})} - \sqrt{c_j(\vu_{k, j})}\big| \geq \alpha \big(\sqrt{r_j(\vu_{k, j})} + \sqrt{c_j(\vu_{k, j})}\big).$ 
\end{enumerate}
For the coordinates $j$ satisfying~\Circled{1}, we have
\begin{align*}
    \big|r_j(\vu_{k, j}) - c_j(\vu_{k, j})\big|
    &= 
    \Big|\Big( \sqrt{r_j(\vu_{k, j})} - \sqrt{c_j(\vu_{k, j})}\Big) \cdot \Big( \sqrt{r_j(\vu_{k, j})} + \sqrt{c_j(\vu_{k, j})}\Big) \Big|
    \\ &< \alpha \Big( \sqrt{r_j(\vu_{k, j})} + \sqrt{c_j(\vu_{k, j})}\Big)^2
    \\ &\leq 2\alpha \Big( r_j(\vu_{k, j}) + c_j(\vu_{k, j}) \Big)\,.
\end{align*}
Above, the first step is by factoring, the second step is by~\Circled{1}, and the third step is by the elementary inequality $(a+b)^2 \leq 2(a^2 + b^2)$. Now note that 
\begin{equation}\label{eq:inter-final-bound}
\begin{aligned}
    r_j(\vu_{k, j}) = \;& 
    e^{\vu_{k}\bl{j} - \vu_{k + 1}\bl{j}} r_j^{1: j - 1}(\vu_{k + 1}) + r_j^{j + 1: n}(\vu_{k}) \leq
    \sqrt{\frac{r_j(\vu_{k, j})}{c_j(\vu_{k, j})}}
    r_j(\vu_{k + 1}) + r_j(\vu_{k}), \\
    c_j(\vu_{k, j}) = \;& 
    e^{\vu_{k + 1}\bl{j} - \vu_{k}\bl{j}} c_j^{1: j - 1}(\vu_{k + 1}) + c_j^{j + 1: n}(\vu_{k}) \leq 
    \sqrt{\frac{c_j(\vu_{k, j})}{r_j(\vu_{k, j})}}
    c_j(\vu_{k+1})
    + c_j(\vu_{k}).
\end{aligned}
\end{equation}
Above, for both the row/columns, the first step is by splitting the sum (see~\eqref{eq:partial-row-column-sums-2}) and using the definition of the iterates (see last line of Algorithm~\ref{alg:ccd}), and the second step is by using the Osborne update~\eqref{eq:osborne-step} and crudely bounding partial row/column sums by full row/column sums. Now notice that~\Circled{1} is equivalent to
\begin{align*}
    \max \left\{ \sqrt{\frac{r_j(\vu_{k, j})}{c_j(\vu_{k, j})}}, \; \sqrt{\frac{c_j(\vu_{k, j})}{r_j(\vu_{k, j})}} \right\} < \frac{1+\alpha}{1 - \alpha}\,.
\end{align*}
Thus, by combining the above three displays,
\begin{align*}
    \Big|r_j(\vu_{k, j}) - c_j(\vu_{k, j})\Big|
    \leq
    2\alpha \Big[ r_j(\vu_{k}) + c_j(\vu_{k}) +  \frac{1+\alpha}{1-\alpha} \Big( r_j(\vu_{k+1}) + c_j(\vu_{k+1}) \Big) \Big]\,.
\end{align*}
Summing over such indices $j$, we can bound the terms in~\eqref{eq:grad-total} satisfying~\Circled{1} by
\begin{align}\label{eq:grad-1}
    &\;\sum_{j\,:\,\Circled{1}}\big|r_j(\vu_{k, j}) - c_j(\vu_{k, j})\big| \notag
    \\ \leq&\; 2\alpha \sum_{j\,:\,\Circled{1}} \Big[ r_j(\vu_{k}) + c_j(\vu_{k}) +  \frac{1+\alpha}{1-\alpha} \Big( r_j(\vu_{k+1}) + c_j(\vu_{k+1}) \Big) \Big]  \notag
    \\ \leq&\; 2\alpha \sum_{j=1}^n \Big[ r_j(\vu_{k}) + c_j(\vu_{k}) +  \frac{1+\alpha}{1-\alpha} \Big( r_j(\vu_{k+1}) + c_j(\vu_{k+1}) \Big) \Big]  \notag
    \\ =&\;  4 \alpha \Big[ \varphi(\vu_{k}) + \frac{1+\alpha}{1-\alpha} \,\varphi(\vu_{k+1}) \Big].
\end{align}

\par Next, for the coordinates $j$ satisfying~\Circled{2},
\begin{align*}
    \big|r_j(\vu_{k, j}) - c_j(\vu_{k, j})\big|
    &= 
    \Big|\Big( \sqrt{r_j(\vu_{k, j})} - \sqrt{c_j(\vu_{k, j})}\Big) \cdot \Big( \sqrt{r_j(\vu_{k, j})} + \sqrt{c_j(\vu_{k, j})}\Big) \Big|
    \\ &\leq \frac{1}{\alpha} \Big( \sqrt{r_j(\vu_{k, j})} - \sqrt{c_j(\vu_{k, j})}\Big)^2\,.
\end{align*}
Summing over such indices $j$ and applying Lemma~\ref{lem:descent}, we can bound the terms in~\eqref{eq:grad-total} satisfying~\Circled{2} by
\begin{align}\label{eq:grad-2}
    \sum_{j\,:\,\Circled{2}}\big|r_j(\vu_{k, j}) - c_j(\vu_{k, j})\big| \leq \;& \frac{1}{\alpha}\sum_{j\,:\,\Circled{2}}\Big(\sqrt{r_j(\vu_{k, j})} - \sqrt{c_j(\vu_{k, j})}\Big)^2 \notag \\
    \leq \;& \frac{1}{\alpha}\sum_{j = 1}^n \Big(\sqrt{r_j(\vu_{k, j})} - \sqrt{c_j(\vu_{k, j})}\Big)^2 \notag \\
    = \;& \frac{1}{\alpha} \big[ \varphi(\vu_{k}) - \varphi(\vu_{k+1})\big]\,.
\end{align}
By plugging~\eqref{eq:grad-1}~and~\eqref{eq:grad-2} into~\eqref{eq:grad-total}, and then re-arranging,
\begin{align*}
     \|\nabla \varphi(\vu_{k+1})\|_1 \leq \;& \sum_{j\,:\,\Circled{1}} \big|r_j(\vu_{k, j}) - c_j(\vu_{k, j})\big| + \sum_{j\,:\,\Circled{2}} \big|r_j(\vu_{k, j}) - c_j(\vu_{k, j})\big| \\
     \leq \;& 4\alpha \big[\varphi(\vu_{k}) + \frac{1+\alpha}{1-\alpha}\varphi(\vu_{k + 1})\big] + \frac{1}{\alpha} \big[ \varphi(\vu_{k}) - \varphi(\vu_{k+1}) \big] \\
     = \;& \frac{8\alpha}{1 - \alpha} \varphi(\vu_{k + 1})+ \Big(  \frac{1}{\alpha} + 4\alpha \Big) \big[ \varphi(\vu_{k}) - \varphi(\vu_{k+1}) \big]\,.
\end{align*}
Using~\eqref{eq:descent-k} to bound $\varphi(\vu_{k}) - \varphi(\vu_{k+1})$, setting $\alpha = \ebar / 9$, and crudely bounding the resulting constants gives
\[
 \|\nabla \varphi(\vu_{k + 1})\|_1 \leq 
    \ebar \varphi(\vu_{k + 1})+ \frac{10\big(\varphi(\vu_0) - \varphi(\vu_{K'})\big)}{\ebar K'}.
\]
The proof is complete by dividing both sides by $\varphi(\vu_{k+1})$, using Lemma~\ref{lem:descent} to bound $\varphi(\vu_{k+1}) \geq \varphi(\vu_{K'})$, and using the definition of $\varphi$ to crudely bound $\varphi(\vu_{K'}) \geq 0$.
\end{proof}

With Proposition~\ref{prop:grad-bound} in hand, we are now ready to establish a near-linear runtime for Osborne's algorithm. Recall that our main result (Theorem~\ref{thm:main}) is the minimum of two such bounds: one that scales as $1/\eps^2$ and the other that accelerates to $\d/\eps$. We first prove the former bound, and then below show how this automatically implies acceleration when $\d \leq 1/\eps$.

\begin{proposition}[Weak version of Theorem~\ref{thm:main}]\label{prop:weak}
    Given a balanceable matrix $\mA$, accuracy $\epsilon > 0$, and initialization $\vu_0$ (not necessarily zero), Osborne's algorithm (Algorithm~\ref{alg:ccd}) finds an $\eps$-balancing after $\cO\big(\frac{\log_2 (\varphi(\vu_0) / \varphi(\vu_*))}{\eps^2}\big)$ cycles. 
\end{proposition}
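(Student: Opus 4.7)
The plan is to invoke Proposition~\ref{prop:grad-bound} as a black box inside a restart/phase scheme, converting its multiplicative dependence on $\varphi(\vu_0)/\varphi(\vu_*)$ into a logarithmic one. I would apply the proposition with $\ebar = \eps/2$ and $K' = K^* := \lceil 80/\eps^2 \rceil$, observing that the proposition's statement permits any initialization, so it applies not only to the first $K^*$ cycles of Osborne's algorithm but equally well to any window of $K^*$ consecutive cycles, with the starting iterate of that window playing the role of the ``initialization.''

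Applied to the window from cycle $s$ to cycle $s + K^*$, the proposition furnishes some $t \in \{s+1, \ldots, s + K^*\}$ with
$$\frac{\|\nabla \varphi(\vu_t)\|_1}{\varphi(\vu_t)} \leq \frac{\eps}{2} + \frac{20\,\varphi(\vu_s)}{\eps\, K^* \,\varphi(\vu_{s+K^*})}.$$
The next step is a dichotomy on the ratio $\varphi(\vu_s)/\varphi(\vu_{s+K^*})$. If this ratio is at most $2$, then the choice of $K^*$ forces the second term to be at most $\eps/2$, so that $\vu_t$ already satisfies $\|\nabla \varphi(\vu_t)\|_1/\varphi(\vu_t) \leq \eps$ and by Lemma~\ref{lem:prelim} certifies an $\eps$-balancing. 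Otherwise $\varphi(\vu_{s+K^*}) < \varphi(\vu_s)/2$, i.e., the potential has strictly more than halved across the window. Thus every window of length $K^*$ either terminates Osborne's algorithm or halves $\varphi$.

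Finally, since $\varphi$ is bounded below by $\varphi(\vu_*) = \varphi_* > 0$ (as $\mA$ is balanceable), the halving case can occur at most $\log_2(\varphi(\vu_0)/\varphi_*)$ times before the terminating case must trigger. Stringing together at most $1 + \log_2(\varphi(\vu_0)/\varphi_*)$ windows of length $K^* = \cO(1/\eps^2)$ gives the claimed total of $\cO(\log_2(\varphi(\vu_0)/\varphi_*)/\eps^2)$ cycles. I do not anticipate a real obstacle: the technical content is concentrated in Proposition~\ref{prop:grad-bound}, and the only remaining idea is to recognize that invoking the proposition window-by-window (rather than once globally) is what converts a linear dependence on $\varphi(\vu_0)/\varphi_*$ into the logarithmic one claimed by the proposition.
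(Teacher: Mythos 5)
Your proposal is correct and follows essentially the same route as the paper's proof: apply Proposition~\ref{prop:grad-bound} with $\ebar = \eps/2$ and $K' = \Theta(1/\eps^2)$, then argue that each such window of cycles either yields an iterate with $\|\nabla \varphi(\vu_t)\|_1/\varphi(\vu_t) \leq \eps$ or halves the potential, and that halving can occur at most $\lceil \log_2(\varphi(\vu_0)/\varphi_*)\rceil$ times. The only (cosmetic) difference is that you phrase the restart as applying the proposition to sliding windows starting at an arbitrary cycle $s$, whereas the paper phrases it as re-invoking the proposition with initialization $\vu_{K'}$; these are the same argument.
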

\begin{proof}
Applying Proposition~\ref{prop:grad-bound} with $\ebar = \frac{\eps}{2}$ and $K' = \frac{80}{\eps^2}$ ensures the existence of $k \in [K']$ such that
\begin{align*}
    \frac{\|\nabla \varphi(\vu_{k})\|_1}{\varphi(\vu_{k})} \leq \;& 
        \frac{\epsilon }{2} \left( 1 + \frac{\varphi(\vu_0)}{2\varphi(\vu_{K'})} \right)\,.
\end{align*}
If $\varphi(\vu_{K'}) \geq \varphi(\vu_0) / 2$, then the right hand side is at most $\eps$, thus by definition $\vu_{k}$ provides an $\eps$-balancing. Otherwise,
Osborne's algorithm halves the objective value in the first $K'$ cycles since $ \varphi(\vu_{K'}) < \varphi(\vu_0) / 2$. In that case, we can repeat the argument, now applying Proposition~\ref{prop:grad-bound} 
starting from $\vu_{K'}$, 
to ensure that either Osborne's algorithm finds an $\eps$-balancing in the next $K'$ cycles or halves the function value again, i.e., $\varphi(\vu_{2K'}) \leq \varphi(\vu_0)/4$. Note that the objective value can halve at most $\lceil \log_2 (\varphi(\vu_0) / \varphi(\vu_*)) \rceil $ times, otherwise  the objective value for Osborne's algorithm would be lower than $\varphi(\vu_*)$, which cannot happen. Therefore Osborne's algorithm finds an $\eps$-balancing after at most $\lceil \log_2 (\varphi(\vu_0) / \varphi(\vu_*)) \rceil \cdot K' = \cO(\tfrac{\log_2 (\varphi(\vu_0) / \varphi(\vu_*))}{\epsilon^2})$ cycles, as desired.
\end{proof}

We now upgrade the $\cO(1/\eps^2)$ convergence result in Proposition~\ref{prop:weak} to our main result. This upgrade conceptually follows the 2-phase argument in~\citep{altschuler2023near}, but operationally is different because there one could directly use the progress of each iteration of Osborne's algorithm, whereas here Proposition~\ref{prop:weak} only controls the progress of Osborne's algorithm indirectly through the existence of a good balancing within every batch of cycles. For the convenience of the reader, we restate Theorem~\ref{thm:main} here before proving it. 

\mainThm*

\begin{proof}
We prove the bound on the number of cycles; this  immediately implies the bound on the number of arithmetic operations since Osborne's algorithm uses $\cO(m)$ arithmetic operations per cycle. The bound on the number of cycles is the minimum of two bounds. The bound $\cO(\tfrac{\log \kappa}{\epsilon^2})$ follows from combining Proposition~\ref{prop:weak} with Lemma~\ref{lemma:bound-gap}; below we prove the other bound $\cO(\tfrac{\d \log \kappa }{\epsilon}$).
\par For shorthand, denote $a := \epsilon \d \log \kappa$ and $\Phi(\vu) := \log (\varphi(\vu) / \varphi(\vu_*))$. 
Let $K'$ denote the first cycle for which $\Phi(\vu_{K'}) \leq a$, and let $K''$ denote the number of remaining cycles. By Proposition~\ref{prop:weak},
\begin{align*}
    K'' = \cO\Big( \frac{a}{\epsilon^2} \Big) = \cO\Big( \frac{\d \log \kappa}{\epsilon} \Big)
\end{align*}
is of the desired order, so it now suffices to show that is also true for $K'$. 

To this end, let $a_0 := \log \kappa$ and $a_i := a_{i-1}/2$ for $i = 1, 2, \dots$ until $a_N \leq a$. Let $K_i'$ denote the number of cycles starting from when the potential $\Phi$ is no greater than $a_{i-1}$ and ending when it is no greater than $a_i$. We argue that
\[
K_i' = \cO\Big( \frac{a_i}{\eps_i^2} \Big) \qquad \text{ where } \qquad  \eps_i := \frac{a_i}{\d \log \kappa}\,.
\]
By Proposition~\ref{prop:weak}, there exists an $\eps_i$-balanced iterate $\vu_{k_i}$ within the $K_i'$ cycles:
\[
    \nabla \Phi(\vu_{k_i}) = \frac{\|\nabla \varphi(\vu_{k_i})\|_1}{\varphi(\vu_{k_i})}\leq \eps_i\,.
\]
Also, by Lemma~\ref{lemma:bound-dist}, there exists a minimizer $\vu_*$ satisfying $\|\vu_{k_i} - \vu_*\|_\infty  \leq \d \log \kappa$. Thus by using the fact that $\Phi(\vu_*) = 0$, convexity of $\Phi$ (see Lemma~\ref{lem:prelim}), and H\"older's inequality,
\begin{align*}
    \Phi(\vu_{k_i}) = \Phi(\vu_{k_i}) - \Phi(\vu_*)
    \leq \|\nabla \Phi(\vu_{k_i}) \|_1 \|\vu_{k_i} - \vu_*\|_\infty 
    \leq \epsilon_i \d \log \kappa = a_i\,.
\end{align*}
Hence $\vu_{k_i}$ would have already met the threshold to exit the $i$-th phase, and by monotonicity of Osborne's algorithm (Lemma~\ref{lem:descent}) this implies the claimed bound on $K_i'$. By bounding $\sum_{i=1}^N \frac{1}{a_i} = \frac{1}{a_N} \sum_{j=0}^{N-1} 2^{-j} \leq \frac{2}{a_N} \leq \frac{4}{a}$, we conclude that $K'$ is of the desired order
\[
    K'
    = \sum_{i=1}^N K_i'
    = \cO\Big( \d^2 \log^2 \kappa \sum_{i=1}^N \frac{1}{a_i}\Big)
    = \cO\Big( \frac{\d^2 \log^2 \kappa}{a}\Big)
    = \cO\Big( \frac{\d \log \kappa}{\epsilon}\Big)\,.
\]
\end{proof}

\subsection{Discussion and extensions}\label{ssec:analysis:discussion}

A few remarks are in order. First, as mentioned before, Theorem~\ref{thm:main} establishes the first near-linear runtime for Osborne's algorithm, improving by a factor of $\cO(n^2)$ over the prior best~\cite{ostrovsky2017matrix}. 
Additionally, our runtime matches that of random Osborne~\citep{altschuler2023near} (which achieved the best-known rate for any variant of Osborne's algorithm) and moreover improves in the sense that our bound is deterministic whereas the bound from \cite{altschuler2023near} only applies in expectation or with high probability.

An interesting aspect of this result is that it provides a problem instance  where a cyclic coordinate method shares the same complexity bound as its randomized counterpart. Note that for quadratic minimization problems, cyclic coordinate descent is known to be order-$n^2$ times slower than randomized coordinate descent in the worst case \cite{sun2021worst}. More generally, existing complexity bounds for cyclic methods are almost always worse than even the traditional full-vector analogs (i.e., single block methods), by polynomial factors in $n$ (e.g., $\sqrt{n}$ or $n^{1/4}$) in the worst case \cite{beck2013convergence,song2021coder,lin2023accelerated,cai2022cyclic,gurbuzbalaban2017cyclic,wright2020analyzing}. In other words, not only is the worst-case complexity of cyclic methods higher than the complexity of randomized block coordinate methods, but even higher than the complexity of traditional (single-block) methods over which randomized methods can improve by factors scaling with $\sqrt{n}$ or $n.$ The only exception in the literature is \cite{chakrabarti2023block}; there, the complexity of a cyclic method is of the same order as for traditional (single block) methods, i.e., for that problem, using a block coordinate method does not appear to lead to theoretical complexity improvements. 

Additionally, an appealing aspect of our analysis is that it is does not rely on the ordering of updates within a cycle, making it applicable to arbitrary (and potentially changing) orderings across cycles. This has several implications. One is a (deterministic) near-linear runtime  which improves by an $\cO(n)$ factor over existing bounds for the random-reshuffle variant of Osborne's algorithm~\cite{altschuler2023near}.
\begin{corollary}[Shuffled Osborne]\label{cor:permutation}
Given a balanceable matrix $\mA$ and target accuracy $\epsilon > 0$, cyclic Osborne with arbitrary (and potentially changing) ordering of updates across cycles finds an $\epsilon$-balancing
after $K = \cO(\tfrac{\log \kappa}{\epsilon}\min\{\tfrac{1}{\epsilon}, \d\})$ cycles. This amounts to   $\cO(\tfrac{m\log \kappa}{\epsilon}\min\{\tfrac{1}{\epsilon}, \d\})$ arithmetic operations.   
\end{corollary}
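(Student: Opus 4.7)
The plan is to observe that the entire proof of Theorem~\ref{thm:main} is completely agnostic to the within-cycle update order, so only minor notational changes are needed. Concretely, for cycle $k$ let $\pi_k : [n] \to [n]$ denote the permutation specifying the update order, and redefine $\vu_{k,s}$ as the iterate right before coordinate $\pi_k(s)$ is balanced, with the Osborne update~\eqref{eq:osborne-step} applied at coordinate $\pi_k(s)$ in the obvious way. The structural invariant that I would rely on throughout is that at step $s$ of cycle $k$, the coordinates $\{\pi_k(1),\dots,\pi_k(s-1)\}$ already carry their cycle-$(k+1)$ values while $\{\pi_k(s),\dots,\pi_k(n)\}$ still carry their cycle-$k$ values; this is the only property of the canonical ordering $1,2,\dots,n$ that the proof of Theorem~\ref{thm:main} actually exploits.

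The two driving ingredients are the Descent Lemma (Lemma~\ref{lem:descent}) and the Imbalance Lemma (Lemma~\ref{lem:imbalance}). The Descent Lemma is ordering-agnostic verbatim, since its proof only uses that consecutive iterates $\vu_{k,s+1}$ and $\vu_{k,s}$ differ in a single coordinate, together with Fact~\ref{lem:step}, and since the telescoping runs over all $n$ coordinates regardless of labels. For the Imbalance Lemma, I would re-define the partial sums in~\eqref{eq:partial-row-column-sums} by splitting the summation index set into ``already-updated'' coordinates $\{\pi_k(1),\dots,\pi_k(s-1)\}$ and ``not-yet-updated'' coordinates $\{\pi_k(s+1),\dots,\pi_k(n)\}$, in place of $\{1,\dots,j-1\}$ and $\{j+1,\dots,n\}$. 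The identity~\eqref{eq:imbalance-1} and the gradient decomposition~\eqref{eq:imbalance-decompose} then carry over mutatis mutandis, and after applying the triangle inequality and swapping the order of the double sum one arrives at the permutation-free bound
\[
\|\nabla \varphi(\vu_{k+1})\|_1 \;\leq\; \sum_{s=1}^n \bigl|r_{\pi_k(s)}(\vu_{k,s}) - c_{\pi_k(s)}(\vu_{k,s})\bigr|.
\]

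With these two permutation-free ingredients in hand, Proposition~\ref{prop:grad-bound}, Proposition~\ref{prop:weak}, and Theorem~\ref{thm:main} go through unchanged, because each aggregates over all $n$ coordinates with no reference to which specific ordering was used; crucially, none of the steps require the permutations $\pi_k$ to be the same across cycles, so the conclusion holds even under adversarially changing orderings. The cost bound follows since one cycle still uses $\cO(m)$ arithmetic operations regardless of the order. The only mild obstacle is the bookkeeping in the generalized Imbalance Lemma: one must carefully re-index the inner double sum in its proof so that the summation condition ``$i > j$'' is replaced by ``the position of coordinate $i$ in $\pi_k$ is later than the position of $\pi_k(s)$,'' and then verify that swapping the order of summation still recovers partial row/column sums of the desired form. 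Once this re-indexing is performed cleanly, no new estimates or conceptual ideas beyond those already present in \S\ref{sec:analysis} are required.
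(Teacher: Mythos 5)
Your proposal is correct and matches the paper's own treatment: the paper states Corollary~\ref{cor:permutation} as an immediate consequence of the fact that the proofs of Lemmas~\ref{lem:descent} and~\ref{lem:imbalance}, Propositions~\ref{prop:grad-bound} and~\ref{prop:weak}, and Theorem~\ref{thm:main} only use the split of coordinates into already-updated versus not-yet-updated within a cycle, never the identity of the canonical order $1,\dots,n$. Your re-indexing of the partial sums by the permutation $\pi_k$ (and the observation that the $s=1$ term only loosens the imbalance bound harmlessly) is exactly the bookkeeping the paper leaves implicit, so no further argument is needed.
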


Another implication is for parallelizing Osborne's algorithm. It is well-known that Osborne's algorithm can be parallelized without any change to its output, when given a coloring of $G_{\mA}$, i.e., a partition $S_1, \dots, S_p$ of the vertices $[n]$ such that for any partition $S_{\ell}$, all vertices are non-adjacent~\citep{bertsekas2015parallel}. The idea is that since any vertices $i,j$ in the same partition $S_{\ell}$ are non-adjacent, their updates in Osborne's algorithm do not affect each other, hence updating them concurrently is equivalent to updating them sequentially. See~\citep{altschuler2023near} for further discussion of how this parallelized variant of Osborne's algorithm corresponds to \emph{block} exact coordinate descent, and how one can leverage sparsity structure of $G_{\mA}$ to find small colorings, e.g., if the max-degree $G_{\mA}$ is bounded (i.e., if each row and column of $\mA$ has a bounded number of non-zero entries).

\begin{corollary}[Parallelized Osborne]\label{cor:parallelized}
    Consider the setup of Theorem~\ref{thm:main}, and suppose also that one is given a coloring $S_1, \dots, S_p$ of $G_{\mA}$. Then the same runtime bound of $K = \cO(\tfrac{\log \kappa}{\epsilon} \min\{ \tfrac{1}{\epsilon},\d\})$ cycles applies to parallelized Osborne. This amounts to the same amount of total work $\cO(mK)$, but only $\cO(pK)$ rounds of parallel computation. 
\end{corollary}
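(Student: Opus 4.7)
The plan is to reduce Corollary~\ref{cor:parallelized} to Corollary~\ref{cor:permutation} by showing that parallelized Osborne produces exactly the same iterates as an appropriately ordered sequential Osborne, after which the cycle-count bound is inherited, and only the per-cycle work/rounds need to be accounted for separately.

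The key observation to establish first is a commutativity statement: within a single color class $S_\ell$, the Osborne updates can be performed in any order (or all simultaneously) without changing the outcome. Indeed, since $S_\ell$ is an independent set of $G_{\mA}$, we have $\mA_{ij} = 0$ whenever $i, j \in S_\ell$, so $r_i(\vu) = \sum_j e^{\vu^{(i)} - \vu^{(j)}} \mA_{ij}$ and $c_i(\vu)$ do not depend on $\vu^{(j)}$ for $j \in S_\ell \setminus \{i\}$. Consequently, the update~\eqref{eq:osborne-step} applied at coordinate $i \in S_\ell$ leaves $r_{i'}, c_{i'}$ unchanged for every other $i' \in S_\ell$. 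Performing all $|S_\ell|$ updates concurrently therefore yields the same iterate as performing them sequentially in any order within the block $S_\ell$.

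Given this, I would identify parallelized Osborne with a specific instance of cyclic Osborne: the ordering that first iterates over all indices in $S_1$, then $S_2$, etc. By the commutativity observation, these two processes produce the same sequence of end-of-cycle iterates $\vu_{k+1}$. Since the cycle-count bound in Theorem~\ref{thm:main} is insensitive to the within-cycle ordering (and in fact allows that ordering to change between cycles), Corollary~\ref{cor:permutation} immediately supplies the bound $K = \cO\big(\frac{\log \kappa}{\epsilon} \min\{\frac{1}{\epsilon}, \d\}\big)$ cycles to reach $\eps$-balancing.

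Finally, I would account for the work and depth of each cycle. Each cycle touches every entry of $\mA$ a constant number of times (when computing row/column sums for each updated coordinate), so the total arithmetic work per cycle is $\cO(m)$, matching the sequential case and yielding $\cO(mK)$ total work. For the parallel depth, within each cycle the updates are performed in $p$ rounds, one per color class $S_1, \ldots, S_p$, and each round consists of independent exact coordinate minimizations that can be dispatched in parallel, giving $\cO(pK)$ parallel rounds overall. There is no genuine obstacle in this argument: the whole proof is essentially the commutativity lemma combined with a direct invocation of Corollary~\ref{cor:permutation}; the only subtlety is ensuring that one correctly identifies parallelized Osborne with a valid cyclic ordering, which the independence of each $S_\ell$ guarantees.
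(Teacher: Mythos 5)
Your proposal is correct and matches the paper's own argument: both identify parallelized Osborne with cyclic Osborne under the block ordering $S_1, S_2, \dots, S_p$, use the independence of each color class (so $\mA_{ij}=0$ within a class and updates do not interact) to equate concurrent and sequential within-block updates, invoke the ordering-insensitive cycle bound of Theorem~\ref{thm:main}, and then account for $\cO(m)$ work per cycle and $p$ rounds per cycle. No meaningful difference from the paper's proof.
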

\begin{proof}
    Since our analysis applies for any ordering of the $n$ update indices, it applies in particular if one first updates all entries in $S_1$, then all entries in $S_2$, and so on. Since the updates in each partition do not affect other, it is equivalent to update the coordinates in each partition in sequential order (as in our original analysis), or to do them all at once (the parallelized Osborne algorithm). Hence the proof of Theorem~\ref{thm:main} directly extends to give the same bound on the number of cycles $K$. This constitutes the same amount of work $\cO(mK)$, but only requires $\cO(pK)$ synchronization rounds of parallel computation. 
\end{proof}

Another appealing feature of our analysis is that it holds even when exact arithmetic is replaced by arithmetic on numbers with low bit complexity: the same near-linear runtime bound (up to universal constants) holds on the number of arithmetic operations when using only $\cO\big(\log(\frac{n\kappa }{\epsilon}) 
\big)
$-bit numbers\footnote{The bit complexity further improves to $\cO(\log\tfrac{n}{\kappa} + \log \log \tfrac{\mA_{\mathrm{max}}}{\mA_{\mathrm{min}}})$, where $\mA_{\mathrm{max}}$ and $\mA_{\mathrm{min}}$ denote the maximum and minimum non-zero entries of $\mA$, if $\mA$ is input through the logarithms of its non-zero entries. This is the case in some applications, e.g., for computing the minimum-mean-cycle of a graph~\citep{altschuler2022approximating}.}. This is an exponential improvement over the $\cO(n \log \kappa)$ bit complexity required by the previous state-of-the-art for Osborne's original algorithm~\citep{ostrovsky2017matrix}, and constitutes an additional $\tilde{\cO}(n)$ improvement in runtime beyond the aforementioned $\cO(mn)$ improvement in the number of arithmetic operations. Our implementation is based on the one in~\cite[Section 8]{altschuler2023near} for randomized variants of Osborne's updates but requires additional gadgets and a more sophisticated analysis: whereas those arguments could simply rely on additive monotonic decrease of $\log \varphi$ and argue that truncation contributes a smaller additive error~\cite{altschuler2017near}, this is insufficient for our analysis of cyclic Osborne updates. Instead, our argument is based on two claims: (1) If cyclic Osborne fails to decrease the objective value in a cycle, then the current iterate is $\epsilon$-balanced. (2) Otherwise, the truncation error can be absorbed in bounding $\|\nabla \varphi(\vu_{k+1})\|_1$, leading to qualitatively the same key bound as in Proposition \ref{prop:grad-bound}, which then directly leads to the same runtime as in the exact arithmetic setting above. We state the theorem below and defer the proof to Appendix~\ref{appx:bit} for brevity.

\begin{restatable}[Bit Complexity]{theorem}{bitComplexity}\label{thm:bit}
Given a balanceable matrix $\mA$ and accuracy $\epsilon > 0$, Osborne's algorithm with arbitrary (and potentially changing) orderings of updates across cycles finds an $\epsilon$ balancing 
after $\cO\big(\frac{m\log \kappa}{\epsilon}\min\{\frac{1}{\epsilon}, \d\}\big)$ arithmetic operations over $\cO\big(\log \frac{n \kappa}{\epsilon}
\big)
$-bit numbers.    
\end{restatable}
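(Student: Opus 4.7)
The plan is to implement Osborne's algorithm with the truncation scheme of \cite[Section 8]{altschuler2023near}: maintain each coordinate $\vu\bl{j}$ as an integer multiple of a grid width $\eta$, and after every inner update~\eqref{eq:osborne-step} round the new value to the nearest grid point before the next step. By Lemma~\ref{lemma:bound-dist}, the useful iterates lie in a box of $\ell_\infty$-radius $\cO(\d \log \kappa)$ about an optimizer, so each $\vu\bl{j}$ has magnitude $\cO(\d \log \kappa)$ and needs $\cO(\log(\d \log \kappa))$ bits of range; choosing $\eta = \Theta(\eps/n)$ then makes each stored number representable in $\cO(\log(n\kappa/\eps))$ bits. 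Row and column sums are recomputed exactly (at the same bit precision) from the current $\tilde{\vu}$ each step.

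To recover the cycle bound of Theorem~\ref{thm:main}, I would establish truncation-perturbed analogs of the two key lemmas. For the descent lemma, replacing the exact Osborne step by a value within $\eta$ of it multiplies the geometric-mean balancing $\sqrt{r_j c_j}$ by factors $e^{\pm\delta}$ with $|\delta|\leq\eta$, which adds an overhead of $\cO(\eta^2)(r_j(\tilde{\vu}_{k,j}) + c_j(\tilde{\vu}_{k,j}))$ to each inner step and hence $\cO(n\eta^2)\,\varphi(\tilde{\vu}_k)$ per cycle, on top of the exact decrease $\sum_j(\sqrt{r_j(\tilde{\vu}_{k,j})} - \sqrt{c_j(\tilde{\vu}_{k,j})})^2$. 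For the imbalance lemma, the exact identity $r_j(\vu_{k,j+1}) = c_j(\vu_{k,j+1})$ used in the proof of Lemma~\ref{lem:imbalance} becomes $\cO(\eta)(r_j + c_j)$-approximate; propagating this through the argument adds a $\cO(n\eta)\,\varphi(\tilde{\vu}_{k+1}) = \cO(\eps)\,\varphi(\tilde{\vu}_{k+1})$ correction to the bound $\|\nabla \varphi(\tilde{\vu}_{k+1})\|_1 \leq \sum_{j}|r_j(\tilde{\vu}_{k,j}) - c_j(\tilde{\vu}_{k,j})|$.

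With these perturbed lemmas in hand, I would replay the two-case argument of Proposition~\ref{prop:grad-bound} at the \emph{cycle} level, matching the two claims highlighted in the discussion before Theorem~\ref{thm:bit}. Case (1): if some cycle fails to decrease $\varphi$ by a definite amount, then the perturbed descent lemma forces $\sum_j(\sqrt{r_j(\tilde{\vu}_{k,j})} - \sqrt{c_j(\tilde{\vu}_{k,j})})^2 = \cO(\eps^2)\,\varphi$, so all within-cycle imbalances are of order $\eps\sqrt{\varphi}$, and the perturbed imbalance lemma directly certifies $\tilde{\vu}_{k+1}$ as $\eps$-balanced. Case (2): otherwise the cycle decreases $\varphi$ by a definite amount, so the Case~\Circled{2} calculation of Proposition~\ref{prop:grad-bound} absorbs the $\cO(\eps)\varphi$ truncation correction with the same constant factors, giving $\|\nabla \varphi(\tilde{\vu}_{k+1})\|_1 / \varphi(\tilde{\vu}_{k+1}) = \cO(\eps)$ for some iterate in the window. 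The remainder of the argument---iterating $\cO(\log \kappa)$ halvings of $\varphi$ as in Proposition~\ref{prop:weak} and then the two-phase acceleration driven by Lemma~\ref{lemma:bound-dist}---carries over verbatim, so the cycle bound $\cO(\tfrac{\log \kappa}{\eps}\min\{\tfrac{1}{\eps}, \d\})$ from Theorem~\ref{thm:main} is preserved, with each cycle costing $\cO(m)$ arithmetic operations on $\cO(\log(n\kappa/\eps))$-bit numbers.

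The main obstacle is the \emph{cycle-level} nature of this dichotomy. In the randomized analysis of \cite{altschuler2023near}, each step's expected decrease of $\log\varphi$ dominates its own truncation error, so one can absorb truncation iteration-by-iteration; in cyclic Osborne, descent is only guaranteed on a full cycle and $\varphi$ can even fluctuate mid-cycle after truncation, so one must aggregate $n$ truncation errors and show they are collectively dominated by the cycle's descent whenever meaningful descent occurs. Arranging this bookkeeping is exactly what fixes the polynomial in $n$ inside the choice $\eta = \Theta(\eps/n)$, and hence the $\cO(\log(n\kappa/\eps))$ bit complexity.
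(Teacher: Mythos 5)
Your proposal follows essentially the same route as the paper's Appendix~\ref{appx:bit}: truncate the iterates, prove inexact versions of the descent lemma (Lemma~\ref{lem:descent}) and the imbalance lemma (Lemma~\ref{lem:imbalance}) carrying an additive error proportional to the within-cycle row/column sums, and then run the dichotomy ``a cycle with no descent certifies an $\cO(\eps)$-balancing'' versus ``a cycle with descent absorbs the truncation error, so Proposition~\ref{prop:grad-bound} survives,'' after which the proofs of Proposition~\ref{prop:weak} and Theorem~\ref{thm:main} carry over unchanged. The differences are bookkeeping: you round to a grid of width $\Theta(\eps/n)$ and aggregate the $n$ per-step errors over a cycle, whereas the paper truncates each update to additive accuracy $\Theta(\eps^2)$ and absorbs the case-\Circled{2} error via $\sqrt{r_jc_j}\le\tfrac{1}{4\alpha^2}(\sqrt{r_j}-\sqrt{c_j})^2$ (Lemma~\ref{lem:helper-err}); both choices fit in $\cO(\log\tfrac{n\kappa}{\eps})$ bits. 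One small imprecision: your claimed correction $\cO(n\eta)\,\varphi(\tilde{\vu}_{k+1})$ to the imbalance lemma is really controlled by the potential at the \emph{start} of the cycle, which can greatly exceed $\varphi(\tilde{\vu}_{k+1})$ when the cycle makes progress; the surplus is precisely a descent term $\varphi(\tilde{\vu}_k)-\varphi(\tilde{\vu}_{k+1})$ and is absorbed by your case (2), but it should be stated that way rather than as an unconditional $\cO(\eps)\varphi(\tilde{\vu}_{k+1})$ bound.

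One step would fail as written: ``row and column sums are recomputed exactly (at the same bit precision).'' The scaled entries $e^{\vu\bl{j}-\vu\bl{i}}\mA_{ji}$ can span a ratio of order $\kappa^{\Theta(\d)}$, so exact summation requires $\Omega(\d\log\kappa)$ bits, far beyond the $\cO(\log\tfrac{n\kappa}{\eps})$ budget. The paper instead computes $\log r_j,\log c_j$ only to additive accuracy $\Theta(\eps^2)$ via the log-sum-exp trick, preprocesses $\log\mA_{ij}$ to accuracy $\Theta(\eps/n)$, and checks the termination criterion inexactly (Implementation~\ref{alg:implementation} and Lemma~\ref{lem:termination-err}). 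These are additional error sources beyond your grid rounding, but they manifest as exactly the multiplicative $e^{\pm\tau}$ perturbation~\eqref{eq:update-multi} that your inexact lemmas already accommodate, so the repair is routine and does not change your analysis; it does, however, need to be part of the implementation and of the error accounting.
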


\section{Illustrative Numerical Examples}\label{sec:num-ex}

As discussed earlier in the paper, the main motivation for this paper is that Osborne's original 1960 algorithm for matrix balancing has remained the practical state of the art, used by default (e.g., before eigenvalue computation) in mainstream software packages such as Python, Julia, MATLAB, LAPACK, and EISPACK. As discussed in the introduction, recent work has proposed variants of Osborne's algorithm that choose update indices in different ways because this makes the algorithm simpler to analyze. Although the focus of this paper is theoretical, this section provides illustrative numerical results comparing all these variants, in order to highlight why Osborne's original algorithm has remained the default implementation.

\par Specifically, we compare Osborne's original algorithm (which simply updates coordinates $i \in [n]$ in sorted cyclic order), greedy Osborne~\citep{ostrovsky2017matrix} (which chooses update coordinates $i$ with maximal imbalance $(\sqrt{r_i} - \sqrt{c_i})^2$), weighted random Osborne~\citep{ostrovsky2017matrix} (which chooses update coordinates $i \in [n]$ with probability proportional to $r_i + c_i$), random Osborne~\citep{altschuler2023near} (which chooses update coordinates $i$ uniformly at random from $[n]$), and random-reshuffle Osborne~\citep{altschuler2023near} (which randomly shuffles the update coordinates for each cycle). We compare these algorithms on two families of inputs; qualitatively similar results are obtained on other inputs. In the first setup, we generate a random matrix $\mA \in \R^{1000 \times 1000}$ with a small number of salient rows and columns, where all entries are uniformly drawn from the interval $(0, 0.001)$ except for the entries in the last $20$ rows and columns, which are uniformly drawn from the interval $(0, 1)$. The rationale for this input instance is that one might expect cyclic Osborne to perform poorly since it spends equal time updating all rows and columns, not just the salient ones. 
In the second setup, we use the hard  instance from~\cite{kalantari1997complexity} where $\mA_{i, i + 1} = \mA_{2k + 2 - i, 2k + 1 - i} = 1$, $\mA_{i + 1, i} = \mA_{2k + 1 - i, 2k + 2 - i} = 0.01$ for $i \in [k]$, and $\mA_{n, 1} = \mA_{1, n} = 1$, with $k = 40$ and $n = 2k + 1$. The rationale for this  instance is that it requires finding diagonal scalings with extreme entries, see~\citep[\S3]{kalantari1997complexity}.
Figure~\ref{fig:exp} plots the $\ell_1$ imbalance of the algorithms against the number of iterations (left), the number of nonzero entries the algorithm touches (middle), and the wall-clock time (right), until error $10^{-10}$ is reached. All experiments are run in MATLAB on a  MacBook Pro laptop.

\begin{figure}[t]
    \hspace*{\fill}
    \subfloat[Number of iterations]{\includegraphics[width=0.325\textwidth]{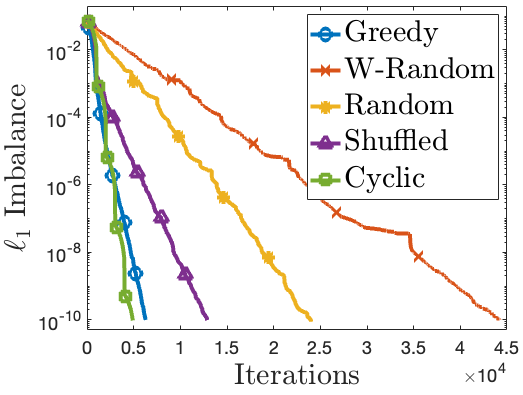}\label{fig:random-iter}}
    \hfill
    \subfloat[Number of nonzeros]{\includegraphics[width=0.325\textwidth]{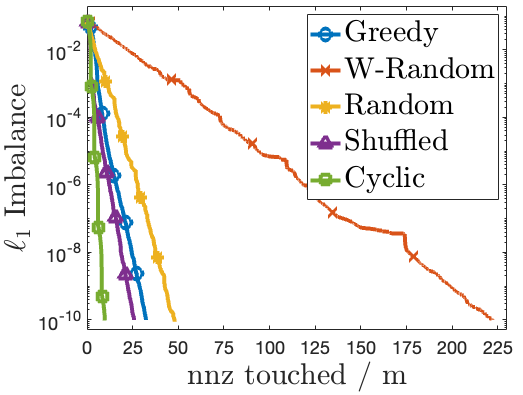}\label{fig:random-nnz}}
    \hfill
    \subfloat[Wall-clock time]{\includegraphics[width=0.325\textwidth]{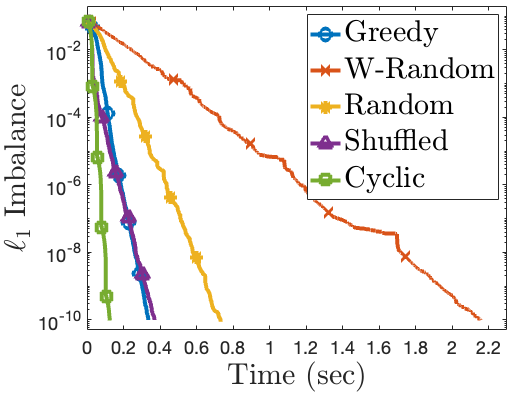}\label{fig:random-time}}
    \hspace*{\fill} \\
    \hspace*{\fill}
    \subfloat[Number of iterations]{\includegraphics[width=0.325\textwidth]{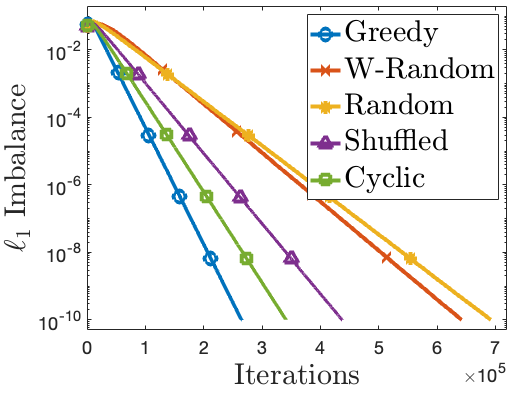}\label{fig:KKO-iter}}
    \hfill
    \subfloat[Number of nonzeros]{\includegraphics[width=0.325\textwidth]{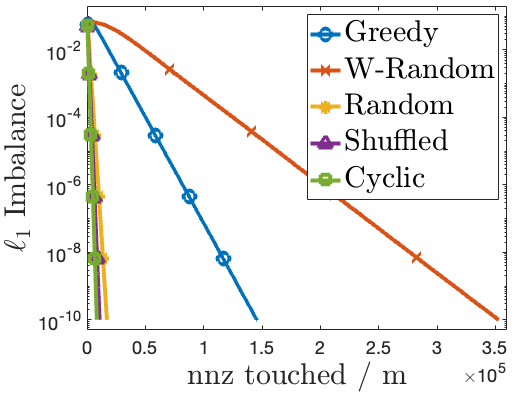}\label{fig:KKO-nnz}}
    \hfill
    \subfloat[Wall-clock time]{\includegraphics[width=0.325\textwidth]{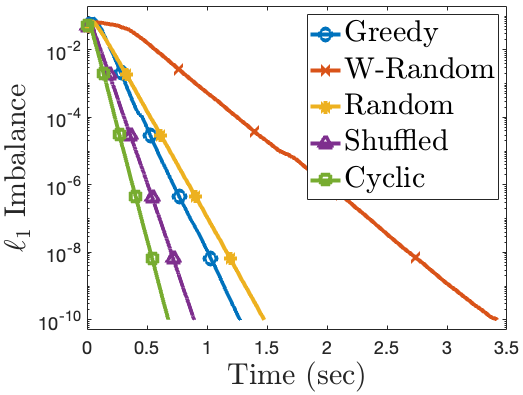}\label{fig:KKO-time}}
    \hspace*{\fill}
    \caption{Comparison of different variants of Osborne's algorithm for matrix balancing. First row: random input with a small number of salient rows/columns. Second row: hard input from~\cite{kalantari1997complexity}. Each row has three plots, with different comparison criteria on the horizontal axis. Osborne's original algorithm (cyclic) consistently outperforms other variants, which is why it is the standard implementation.}
    \label{fig:exp}
\end{figure}

Observe that on both problem instances, Osborne's original (cyclic) algorithm  outperforms all other variants in terms of the number of nonzeros touched and wall-clock time. Notice that it converges much faster than random variants of Osborne, despite their theoretical complexity bounds being of the same asymptotic order~\citep{altschuler2023near}. Notice also that---in contrast to cyclic coordinate descent on convex quadratic problems~\citep{wright2020analyzing}---randomly reshuffling the ordering does not provide additional runtime benefits, while it introduces extra overhead for generating the random permutation. Weighted random Osborne is slowed down on both problems by the overhead of updating and maintaining the sampling probabilities for each row/column.
Finally, we note that greedy Osborne is comparable to cyclic Osborne in terms of the number of update iterations, since both problems are dense and their runtime bounds are of the same asymptotic order. However, greedy Osborne is much slower when it comes to the number of nonzeros touched and wall-clock time, due to the overhead for greedy coordinate selection (an extra data structure is required for amortizing the cost of greedy selection) and the fact that cyclic Osborne is cache-friendly as it accesses adjacent rows and columns in consecutive iterations.

\section{Conclusion}

This paper provides the first near-linear runtime bound for Osborne's classical algorithm for matrix balancing \cite{osborne1960pre}. This not only improves over state-of-the-art results by large polynomial factors in the problem dimension, but also provides the first runtime that is not dominated by the runtime of downstream tasks like eigenvalue computation, explaining the longstanding empirical observation that Osborne's algorithm preconditions in an essentially negligible amount of time compared to downstream tasks~\citep[\S11.6.1]{press2007numerical}. While multiple variants of Osborne's iteration have been considered in the recent literature, our results apply to Osborne's original proposal of cyclic (or round-robin) update indices, which has long remained the practical state-of-the-art. Our result proves it is also theoretical state-of-the-art, effectively closing this gap between theory and practice, and answering open questions in~\cite{schulman2017analysis,altschuler2023near}.

\section*{Acknowledgments}
This work was supported in part by the Air Force Office of Scientific Research under award number FA9550-24-1-0076, by the U.S.\ Office of Naval Research under contract number N00014-22-1-2348, and by a Seed Grant Award from Apple. Any opinions, findings and conclusions or recommendations expressed in this material are those of the author(s) and do not necessarily reflect the views of the U.S. Department of Defense.

\bibliographystyle{plainnat}
\bibliography{reference}

\begin{thebibliography}{46}
\providecommand{\natexlab}[1]{#1}
\providecommand{\url}[1]{\texttt{#1}}
\expandafter\ifx\csname urlstyle\endcsname\relax
  \providecommand{\doi}[1]{doi: #1}\else
  \providecommand{\doi}{doi: \begingroup \urlstyle{rm}\Url}\fi

\bibitem[Allen-Zhu et~al.(2017)Allen-Zhu, Li, Oliveira, and Wigderson]{allen2017much}
Zeyuan Allen-Zhu, Yuanzhi Li, Rafael Oliveira, and Avi Wigderson.
\newblock Much faster algorithms for matrix scaling.
\newblock In \emph{Symposium on Foundations of Computer Science (FOCS)}, 2017.

\bibitem[Altschuler et~al.(2017)Altschuler, Niles-Weed, and Rigollet]{altschuler2017near}
Jason Altschuler, Jonathan Niles-Weed, and Philippe Rigollet.
\newblock Near-linear time approximation algorithms for optimal transport via {S}inkhorn iteration.
\newblock In \emph{Advances in Neural Information Processing Systems (NeurIPS)}, 2017.

\bibitem[Altschuler(2022)]{altschuler2022flows}
Jason~M Altschuler.
\newblock Flows, scaling, and entropy revisited: a unified perspective via optimizing joint distributions.
\newblock \emph{SIAM Group on Optimization's Views and News}, 2022.

\bibitem[Altschuler and Parrilo(2022)]{altschuler2022approximating}
Jason~M Altschuler and Pablo~A Parrilo.
\newblock Approximating {M}in-{M}ean-{C}ycle for low-diameter graphs in near-optimal time and memory.
\newblock \emph{SIAM Journal on Optimization}, 32\penalty0 (3):\penalty0 1791--1816, 2022.

\bibitem[Altschuler and Parrilo(2023)]{altschuler2023near}
Jason~M Altschuler and Pablo~A Parrilo.
\newblock Near-linear convergence of the {R}andom {O}sborne algorithm for matrix balancing.
\newblock \emph{Mathematical Programming}, 198\penalty0 (1):\penalty0 363--397, 2023.

\bibitem[Anderson et~al.(1999)Anderson, Bai, Bischof, Blackford, Demmel, Dongarra, Du~Croz, Greenbaum, Hammarling, McKenney, et~al.]{anderson1999lapack}
Edward Anderson, Zhaojun Bai, Christian Bischof, L~Susan Blackford, James Demmel, Jack Dongarra, Jeremy Du~Croz, Anne Greenbaum, Sven Hammarling, Alan McKenney, et~al.
\newblock \emph{LAPACK users' guide}.
\newblock SIAM, 1999.

\bibitem[Beck and Tetruashvili(2013)]{beck2013convergence}
Amir Beck and Luba Tetruashvili.
\newblock On the convergence of block coordinate descent type methods.
\newblock \emph{SIAM Journal on Optimization}, 23\penalty0 (4):\penalty0 2037--2060, 2013.

\bibitem[Bertsekas and Tsitsiklis(2015)]{bertsekas2015parallel}
Dimitri Bertsekas and John Tsitsiklis.
\newblock \emph{Parallel and distributed computation: numerical methods}.
\newblock Athena Scientific, 2015.

\bibitem[Cai et~al.(2023)Cai, Song, Wright, and Diakonikolas]{cai2022cyclic}
Xufeng Cai, Chaobing Song, Stephen~J Wright, and Jelena Diakonikolas.
\newblock Cyclic block coordinate descent with variance reduction for composite nonconvex optimization.
\newblock In \emph{International Conference on Machine Learning (ICML)}, 2023.

\bibitem[Chakrabarti et~al.(2023)Chakrabarti, Diakonikolas, and Kroer]{chakrabarti2023block}
Darshan Chakrabarti, Jelena Diakonikolas, and Christian Kroer.
\newblock Block-coordinate methods and restarting for solving extensive-form games.
\newblock In \emph{Advances in Neural Information Processing Systems (NeurIPS)}, 2023.

\bibitem[Chen(1998)]{chen1998balancing}
T.~Y. Chen.
\newblock Balancing sparse matrices for computing eigenvalues.
\newblock Master's thesis, University of California, Berkeley, 1998.

\bibitem[Chen and Demmel(2000)]{chen2000balancing}
Tzu-Yi Chen and James~W Demmel.
\newblock Balancing sparse matrices for computing eigenvalues.
\newblock \emph{Linear Algebra and its Applications}, 309\penalty0 (1-3):\penalty0 261--287, 2000.

\bibitem[Cohen et~al.(2017)Cohen, Madry, Tsipras, and Vladu]{cohen2017matrix}
Michael~B Cohen, Aleksander Madry, Dimitris Tsipras, and Adrian Vladu.
\newblock Matrix scaling and balancing via box constrained {N}ewton's method and interior point methods.
\newblock In \emph{Symposium on Foundations of Computer Science (FOCS)}, 2017.

\bibitem[Cuturi(2013)]{cuturi2013sinkhorn}
Marco Cuturi.
\newblock Sinkhorn distances: Lightspeed computation of optimal transport.
\newblock In \emph{Advances in Neural Information Processing Systems (NeurIPS)}, 2013.

\bibitem[Eaves et~al.(1985)Eaves, Hoffman, Rothblum, and Schneider]{eaves1985line}
B~Curtis Eaves, Alan~J Hoffman, Uriel~G Rothblum, and Hans Schneider.
\newblock Line-sum-symmetric scalings of square nonnegative matrices.
\newblock \emph{Mathematical Programming Essays in Honor of George B. Dantzig Part II}, pages 124--141, 1985.

\bibitem[Gilbert(1959)]{gilbert1959random}
Edgar~N Gilbert.
\newblock Random graphs.
\newblock \emph{The Annals of Mathematical Statistics}, 30\penalty0 (4):\penalty0 1141--1144, 1959.

\bibitem[Grad(1971)]{grad1971matrix}
Janez Grad.
\newblock Matrix balancing.
\newblock \emph{The Computer Journal}, 14\penalty0 (3):\penalty0 280--284, 1971.

\bibitem[Gurbuzbalaban et~al.(2017)Gurbuzbalaban, Ozdaglar, Parrilo, and Vanli]{gurbuzbalaban2017cyclic}
Mert Gurbuzbalaban, Asuman Ozdaglar, Pablo~A Parrilo, and Nuri Vanli.
\newblock When cyclic coordinate descent outperforms randomized coordinate descent.
\newblock In \emph{Advances in Neural Information Processing Systems (NeurIPS)}, 2017.

\bibitem[Hartfiel(1971)]{hartfiel1971concerning}
Darald~J Hartfiel.
\newblock Concerning diagonal similarity of irreducible matrices.
\newblock \emph{Proceedings of the American Mathematical Society}, 30\penalty0 (3):\penalty0 419--425, 1971.

\bibitem[Higham(2005)]{higham2005scaling}
Nicholas~J Higham.
\newblock The scaling and squaring method for the matrix exponential revisited.
\newblock \emph{SIAM Journal on Matrix Analysis and Applications}, 26\penalty0 (4):\penalty0 1179--1193, 2005.

\bibitem[Idel(2016)]{idel2016review}
Martin Idel.
\newblock A review of matrix scaling and {S}inkhorn's normal form for matrices and positive maps.
\newblock \emph{arXiv preprint arXiv:1609.06349}, 2016.

\bibitem[Julia()]{JuliaEigen}
Julia.
\newblock Linearalgebra.eigen: Compute eigenvalues and eigenvectors.
\newblock \url{https://docs.julialang.org/en/v1/stdlib/LinearAlgebra/\#LinearAlgebra.eigen}.

\bibitem[Kalantari et~al.(1997)Kalantari, Khachiyan, and Shokoufandeh]{kalantari1997complexity}
Bahman Kalantari, Leonid Khachiyan, and Ali Shokoufandeh.
\newblock On the complexity of matrix balancing.
\newblock \emph{SIAM Journal on Matrix Analysis and Applications}, 18\penalty0 (2):\penalty0 450--463, 1997.

\bibitem[Lee and Wright(2020)]{lee2020inexact}
Ching-pei Lee and Stephen~J Wright.
\newblock Inexact variable metric stochastic block-coordinate descent for regularized optimization.
\newblock \emph{Journal of Optimization Theory and Applications}, 185\penalty0 (1):\penalty0 151--187, 2020.

\bibitem[Lin et~al.(2023)Lin, Song, and Diakonikolas]{lin2023accelerated}
Cheuk~Yin Lin, Chaobing Song, and Jelena Diakonikolas.
\newblock Accelerated cyclic coordinate dual averaging with extrapolation for composite convex optimization.
\newblock In \emph{International Conference on Machine Learning (ICML)}, 2023.

\bibitem[MathWorks()]{MathWorksEig}
MathWorks.
\newblock eig: eigenvalues and eigenvectors.
\newblock \url{https://www.mathworks.com/help/matlab/ref/eig.html}.

\bibitem[NumPy()]{NumPyEig}
NumPy.
\newblock numpy.linalg.eig: Compute the eigenvalues and right eigenvectors of a square array.
\newblock \url{https://numpy.org/doc/stable/reference/generated/numpy.linalg.eig.html}.

\bibitem[Osborne(1960)]{osborne1960pre}
EE~Osborne.
\newblock On pre-conditioning of matrices.
\newblock \emph{Journal of the ACM}, 7\penalty0 (4):\penalty0 338--345, 1960.

\bibitem[Ostrovsky et~al.(2017)Ostrovsky, Rabani, and Yousefi]{ostrovsky2017matrix}
Rafail Ostrovsky, Yuval Rabani, and Arman Yousefi.
\newblock Matrix balancing in $l_p$ norms: Bounding the convergence rate of {O}sborne's iteration.
\newblock In \emph{Symposium on Discrete Algorithms (SODA)}, 2017.

\bibitem[Ostrovsky et~al.(2018)Ostrovsky, Rabani, and Yousefi]{ostrovsky2017strictly}
Rafail Ostrovsky, Yuval Rabani, and Arman Yousefi.
\newblock Strictly balancing matrices in polynomial time using {O}sborne's iteration.
\newblock In \emph{International Colloquium on Automata, Languages and Programming (ICALP)}, 2018.

\bibitem[Parlett and Reinsch(1969)]{parlett1969balancing}
Beresford~N Parlett and Christian Reinsch.
\newblock Balancing a matrix for calculation of eigenvalues and eigenvectors.
\newblock \emph{Numerische Mathematik}, 13:\penalty0 293--304, 1969.

\bibitem[Peyr{\'e} and Cuturi(2019)]{peyre2019computational}
Gabriel Peyr{\'e} and Marco Cuturi.
\newblock Computational optimal transport: With applications to data science.
\newblock \emph{Foundations and Trends{\textregistered} in Machine Learning}, 11\penalty0 (5-6):\penalty0 355--607, 2019.

\bibitem[Press(2007)]{press2007numerical}
William~H Press.
\newblock \emph{Numerical recipes 3rd edition: The art of scientific computing}.
\newblock Cambridge University Press, 2007.

\bibitem[Schneider and Schneider(1991)]{schneider1991max}
Hans Schneider and Michael~H Schneider.
\newblock Max-balancing weighted directed graphs and matrix scaling.
\newblock \emph{Mathematics of Operations Research}, 16\penalty0 (1):\penalty0 208--222, 1991.

\bibitem[Schneider and Zenios(1990)]{schneider1990comparative}
Michael~H Schneider and Stavros~A Zenios.
\newblock A comparative study of algorithms for matrix balancing.
\newblock \emph{Operations Research}, 38\penalty0 (3):\penalty0 439--455, 1990.

\bibitem[Schulman and Sinclair(2017)]{schulman2017analysis}
Leonard~J Schulman and Alistair Sinclair.
\newblock Analysis of a classical matrix preconditioning algorithm.
\newblock \emph{Journal of the ACM}, 64\penalty0 (2):\penalty0 1--23, 2017.

\bibitem[Sinkhorn(1967)]{sinkhorn1967diagonal}
Richard Sinkhorn.
\newblock Diagonal equivalence to matrices with prescribed row and column sums.
\newblock \emph{The American Mathematical Monthly}, 74\penalty0 (4):\penalty0 402--405, 1967.

\bibitem[Smith et~al.(2013)Smith, Boyle, Garbow, Ikebe, Klema, and Moler]{smith2013matrix}
Brian~T Smith, James~M. Boyle, BS~Garbow, Y~Ikebe, VC~Klema, and CB~Moler.
\newblock \emph{Matrix eigensystem routines-EISPACK guide}, volume~6.
\newblock Springer, 2013.

\bibitem[Song and Diakonikolas(2023)]{song2021coder}
Chaobing Song and Jelena Diakonikolas.
\newblock Cyclic coordinate dual averaging with extrapolation.
\newblock \emph{SIAM Journal on Optimization}, 33\penalty0 (4):\penalty0 2935--2961, 2023.

\bibitem[Sun and Ye(2021)]{sun2021worst}
Ruoyu Sun and Yinyu Ye.
\newblock Worst-case complexity of cyclic coordinate descent: {$ O (n^2) $} gap with randomized version.
\newblock \emph{Mathematical Programming}, 185\penalty0 (1):\penalty0 487--520, 2021.

\bibitem[Tarjan(1972)]{tarjan1972depth}
Robert Tarjan.
\newblock Depth-first search and linear graph algorithms.
\newblock \emph{SIAM Journal on Computing}, 1\penalty0 (2):\penalty0 146--160, 1972.

\bibitem[Tomlin(2003)]{tomlin2003new}
John~A Tomlin.
\newblock A new paradigm for ranking pages on the world wide web.
\newblock In \emph{International Conference on World Wide Web (WWW)}, 2003.

\bibitem[Ward(1977)]{ward1977numerical}
Robert~C Ward.
\newblock Numerical computation of the matrix exponential with accuracy estimate.
\newblock \emph{SIAM Journal on Numerical Analysis}, 14\penalty0 (4):\penalty0 600--610, 1977.

\bibitem[Wright and Lee(2020)]{wright2020analyzing}
Stephen Wright and Ching-pei Lee.
\newblock Analyzing random permutations for cyclic coordinate descent.
\newblock \emph{Mathematics of Computation}, 89\penalty0 (325):\penalty0 2217--2248, 2020.

\bibitem[Wright(2015)]{wright2015coordinate}
Stephen~J Wright.
\newblock Coordinate descent algorithms.
\newblock \emph{Mathematical Programming}, 151\penalty0 (1):\penalty0 3--34, 2015.

\bibitem[Young et~al.(1991)Young, Tarjan, and Orlin]{young1991faster}
Neal~E Young, Robert~E Tarjan, and James~B Orlin.
\newblock Faster parametric shortest path and minimum-balance algorithms.
\newblock \emph{Networks}, 21\penalty0 (2):\penalty0 205--221, 1991.

\end{thebibliography}

\appendix

\section{Bit Complexity}\label{appx:bit}

This section provides a proof of Theorem \ref{thm:bit}. The implementation is based on the one in~\citep{altschuler2023near} for randomized variants of Osborne's algorithm, but requires a more sophisticated analysis for Osborne's original cyclic algorithm; see the discussion in \S\ref{ssec:analysis:discussion}.

\begin{implementation}\label{alg:implementation}
Given a target accuracy $\epsilon \in (0, 1)$:
\begin{enumerate}[label=(\roman*)]
    \item Preprocess $\mA$ by computing $\big\{\log\mA_{ij} \big\}_{(i, j): \mA_{ij} \neq 0}$ to additive accuracy $\gamma = \Theta(\epsilon / n)$.
    \item Truncate Osborne iterates entrywise to additive accuracy $\gamma' = \Theta(\epsilon^2)$.
    \item Compute Osborne iterates entrywise to additive accuracy $\gamma'$ using log-sum-exp computation tricks 
    (Lemma 17 of~\citep{altschuler2023near})
    and accessing $\mA_{ij}$ only through the truncated $\log \mA_{ij}$ values computed in $(\romannumeral1)$. 
    \item After the completion of every cycle, check whether the current iterate is $\bar{\epsilon}$-balanced for $\bar{\epsilon} = \Theta(\epsilon)$ using Lemma~\ref{lem:termination-err}.
\end{enumerate}
\end{implementation}
We denote the iterates of this inexact implementation of Osborne's algorithm by $\{\bar \vu_{k, j}\}$, to distinguish them from the iterates $\{\vu_{k, j}\}$ of the exact implementation in Algorithm~\ref{alg:ccd}. We argue that none of the four inexact implementation steps affects the analysis by more than a universal constant. 
\par By~\citep[Lemma 14]{altschuler2023near}, the truncation in $(\romannumeral1)$ is precise enough that an $\cO(\epsilon)$-balancing of the truncated version of $A$ is an $\cO(\epsilon)$-balancing of $A$; therefore we henceforth ignore $(\romannumeral1)$ by supposing $A$ is not truncated. 
\par The verification of the termination criteria $(\romannumeral4)$ is handled by the following simple lemma, which shows that using low bit complexity, the balancing criterion $g := \|\nabla \varphi\|/\varphi$ can be estimated to sufficiently high accuracy that one can terminate if and only if $g = \Theta(\bar \eps)$.

\begin{lemma}[Inexact verification of termination criteria]\label{lem:termination-err}
    Suppose that $\bar{\vu}$ and $\{\log \mA_{ij}\}_{ij : \mA_{ij} \neq 0}$ are represented using $b$ bits, and let $\bar \eps < 1$. Let $g := \|\nabla \varphi(\bar \vu)\|_1/\varphi(\bar \vu)$. Using $\cO(m)$ operations on $\cO(b + \log \frac{n}{\bar \epsilon})$-bit numbers, one can compute $\hat g$ such that $\tfrac{g}{2} - \tfrac{\bar \eps}{2} \leq \hat g \leq 2g + \tfrac{\bar \eps}{2}$.
\end{lemma}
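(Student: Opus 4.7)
The plan is to compute multiplicative approximations of each row sum $r_i(\bar{\vu})$ and column sum $c_i(\bar{\vu})$, and then combine them into an approximation $\hat g$ of the ratio $g = \|\nabla \varphi(\bar{\vu})\|_1 / \varphi(\bar{\vu})$. Concretely, pick a target multiplicative tolerance $\delta = c \bar{\epsilon}$ for a sufficiently small absolute constant $c$, and apply the log-sum-exp computation trick of Lemma 17 of \citep{altschuler2023near} to the expressions $r_i(\bar{\vu}) = \sum_j e^{\bar{\vu}\bl{i} - \bar{\vu}\bl{j}} \mA_{ij}$ and $c_i(\bar{\vu}) = \sum_j e^{\bar{\vu}\bl{j} - \bar{\vu}\bl{i}} \mA_{ji}$, accessing $\mA$ only through its (already truncated) values $\log \mA_{ij}$. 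This yields numbers $\hat r_i, \hat c_i$ satisfying $(1-\delta) r_i(\bar{\vu}) \le \hat r_i \le (1+\delta) r_i(\bar{\vu})$ and the analogous inequalities for $\hat c_i$, represented using $O(b + \log(n/\bar{\epsilon}))$ bits. Since $r_i$ and $c_i$ collectively touch each nonzero of $\mA$ a constant number of times, the total arithmetic cost is $O(m)$ operations.

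Once these are in hand, define the estimator
\begin{equation*}
    \hat g := \frac{\sum_{i=1}^n |\hat r_i - \hat c_i|}{\sum_{i=1}^n \hat r_i}.
\end{equation*}
The numerator and denominator can each be computed in $O(n)$ additions using numbers of the same bit length, and a single division of the same precision produces $\hat g$, all within the claimed bit complexity.

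The error analysis proceeds as follows. For each $i$, the reverse triangle inequality together with the multiplicative guarantee gives
\begin{equation*}
    \bigl| |\hat r_i - \hat c_i| - |r_i(\bar{\vu}) - c_i(\bar{\vu})| \bigr| \;\le\; |\hat r_i - r_i(\bar{\vu})| + |\hat c_i - c_i(\bar{\vu})| \;\le\; \delta\bigl(r_i(\bar{\vu}) + c_i(\bar{\vu})\bigr).
\end{equation*}
Summing over $i$, and using $\sum_i (r_i + c_i) = 2\varphi(\bar{\vu})$, yields
$\bigl|\sum_i |\hat r_i - \hat c_i| - \|\nabla \varphi(\bar{\vu})\|_1\bigr| \le 2\delta \varphi(\bar{\vu})$. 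For the denominator, $\sum_i \hat r_i \in [(1-\delta)\varphi(\bar{\vu}), (1+\delta)\varphi(\bar{\vu})]$. Combining these bounds gives
\begin{equation*}
    \frac{g - 2\delta}{1+\delta} \;\le\; \hat g \;\le\; \frac{g + 2\delta}{1 - \delta}.
\end{equation*}
Choosing $\delta$ to be a sufficiently small constant multiple of $\bar{\epsilon}$ (e.g., $\delta \le \bar{\epsilon}/8$ and $\delta \le 1/2$) yields the desired sandwich $g/2 - \bar{\epsilon}/2 \le \hat g \le 2g + \bar{\epsilon}/2$ after elementary manipulations.

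The conceptually delicate point is that one cannot hope for a multiplicative approximation of $g$, because $\|\nabla \varphi\|_1$ is a sum of signed quantities $r_i - c_i$ that can cancel to be much smaller than $r_i + c_i$. The key observation---and the reason the lemma's statement asks for only the weaker sandwich bound---is that the cancellation error in approximating $|r_i - c_i|$ by $|\hat r_i - \hat c_i|$ is controlled by $\delta(r_i + c_i)$, and these control terms sum to an additive slack of $O(\delta \varphi)$ that is absorbed cleanly after dividing by $\hat \varphi \approx \varphi$. This is the main obstacle, but it resolves itself once one writes the right additive-versus-multiplicative accounting above.
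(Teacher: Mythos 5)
Your proposal is correct and follows essentially the same route as the paper's proof: multiplicative approximations of the row/column sums via the log-sum-exp trick of \citep{altschuler2023near}, the reverse triangle inequality to convert per-coordinate multiplicative error into an additive slack of $O(\delta\varphi)$ for the numerator, and a multiplicative control of the denominator (your $\sum_i \hat r_i$ is just the paper's $\hat\varphi$ since $\varphi(\bar\vu)=\sum_i r_i(\bar\vu)$). The only cosmetic difference is that the paper phrases the tolerance as $e^{\pm\rho}$ rather than $1\pm\delta$, which changes nothing.
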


\begin{proof}
    For shorthand, denote $ r_j := r_j(\bar \vu)$, $c_j := c_j(\bar \vu)$, and $\varphi := \varphi(\bar \vu)$. Then $g = \sum_{j=1}^n |r_j - c_j| / \varphi$. By the log-sum-exp trick in~\citep[Lemma 17]{altschuler2023near}, using $\cO(m)$ operations on $\cO(b + \log \frac{n}{\rho})$-bit numbers, we can compute $\hat{r}_j$, $\hat{c}_j$, $\hat{\varphi}$ that entrywise are $e^{\pm \rho}$ multiplicative approximations of $r_j$, $c_j$, $\varphi$, respectively. Now by the reverse triangle inequality, the multiplicative approximation guarantee, and the definition of $\varphi$,
    \begin{align*}
        \sum_{j=1}^n \Big| |r_j - c_j| - |\hat{r}_j - \hat{c}_j| \Big| 
        \leq 
        \sum_{j=1}^n \Big( |r_j - \hat{r}_j| + |c_j - \hat{c}_j| \Big)
        \leq 
        (e^{\rho} - 1) \sum_{j=1}^n \Big( r_j + c_j \Big)
        = 
        2(e^{\rho} - 1) \varphi\,.
    \end{align*}
    It follows that the estimate $\hat g := \sum_{j=1}^n |\hat{r}_j - \hat{c}_j| / \hat{\varphi}$ satisfies
    \begin{align*}
        \hat g 
        = 
        \frac{\sum_{j=1}^n |\hat{r}_j - \hat{c}_j|}{\hat{\varphi}}
        \leq 
        \frac{\sum_{j=1}^n |r_j - c_j| + 2(e^{\rho}-1)\varphi}{e^{-\rho} \varphi}
        = e^{\rho} g + 2e^{\rho} (e^{\rho} - 1)\,,
    \end{align*}
    and similarly
    \begin{align*}
        \hat g 
        =
        \frac{\sum_{j=1}^n |\hat{r}_j - \hat{c}_j|}{\hat{\varphi}}
        \geq 
        \frac{\sum_{j=1}^n |r_j - c_j|}{e^{\rho} \varphi} - \frac{2(e^{\rho}-1)\varphi}{e^{-\rho} \varphi}
        = e^{-\rho} g - 2e^{\rho} (e^{\rho} - 1)\,.
    \end{align*}
    Setting $\rho = \Theta(\bar \epsilon)$ with an appropriate choice of constant completes the proof.
\end{proof}

\par It therefore remains only to control the error due to $(\romannumeral2)$ and $(\romannumeral3)$. By~\citep[Lemmas 15 and 16]{altschuler2023near}, the combined effect of $(\romannumeral2)$ and $(\romannumeral3)$ is that
\begin{align}
    \left| \vu_{k + 1}\bl{j} - \left( \vu_{k}\bl{j} + \frac{1}{2} \log c_j(\bar \vu_{k,j}) - \frac{1}{2} \log r_j(\bar \vu_{k,j}) \right) \right| \leq 2\gamma'\,,
\end{align}
which is an inexact version of the update in Osborne's algorithm (c.f.\,Algorithm~\ref{alg:ccd}) up to $ 2\gamma'$ additive error. By exponentiating, bounding $|1 - e^{2\gamma'}| = \cO(\gamma')$, and substituting $\tau = \cO(\gamma')$ for an appropriate constant,
\begin{equation}\label{eq:update-multi}
\begin{aligned}
    \;& \Big| e^{\bar \vu_{k + 1}\bl{j} - \bar \vu_{k}\bl{j}} - \sqrt{c_j(\bar \vu_{k,j}) / r_j(\bar \vu_{k, j})} \Big| \leq \tau\sqrt{ c_j(\bar \vu_{k,j}) / r_j(\bar \vu_{k, j})}, \\
    \;& \Big| e^{\bar \vu_{k}\bl{j} - \bar \vu_{k + 1}\bl{j}} - \sqrt{ r_j(\bar \vu_{k, j}) / c_j(\bar \vu_{k,j})} \Big| \leq \tau \sqrt{ r_j(\bar \vu_{k, j}) / c_j(\bar \vu_{k,j})}.
\end{aligned}
\end{equation}
As a result, this inexact Osborne update only balances the $j\textsuperscript{th}$ row/column sums approximately, with error that scales with $\sqrt{r_j(\bar \vu_{k, j}) c_j(\bar \vu_{k, j})}.$
\begin{fact}[Inexact version of Fact~\ref{lem:step}]\label{lem:step-err}
    For every $k \geq 0$ and $j \in [n],$
     \begin{align*}
         \;& \Big| r_j(\bar \vu_{k, j + 1}) - \sqrt{r_j(\bar \vu_{k, j}) c_j(\bar \vu_{k, j})}\Big| \leq \tau \sqrt{r_j(\bar \vu_{k, j}) c_j(\bar \vu_{k, j})}, \\
         \;& \Big| c_j(\bar \vu_{k, j + 1}) - \sqrt{r_j(\bar \vu_{k, j}) c_j(\bar \vu_{k, j})} \Big| \leq \tau \sqrt{r_j(\bar \vu_{k, j}) c_j(\bar \vu_{k, j})}.
     \end{align*}
     As a consequence, we also have 
     \begin{align*}
         \big|  r_j(\bar \vu_{k, j + 1}) -  c_j(\bar \vu_{k, j + 1})\big| \leq 2\tau\sqrt{r_j(\bar \vu_{k, j}) c_j(\bar \vu_{k, j})}.
     \end{align*}
\end{fact}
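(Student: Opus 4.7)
\textbf{Proof proposal for Fact~\ref{lem:step-err}.}

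The plan is to reduce the claim to a one-line consequence of the multiplicative update error bound in~\eqref{eq:update-multi}. The key observation is that the iterates $\bar\vu_{k,j+1}$ and $\bar\vu_{k,j}$ differ only in their $j$\textsuperscript{th} coordinate (by construction of the inner loop of Algorithm~\ref{alg:ccd}, adapted to the inexact updates), and moreover $\mA_{jj} = 0$ so the $j$\textsuperscript{th} row and column sums do not depend on the diagonal entry. First, I would show that this implies the multiplicative identities
\[
    r_j(\bar\vu_{k, j+1}) = e^{\bar\vu_{k+1}\bl{j} - \bar\vu_k\bl{j}}\, r_j(\bar\vu_{k, j})\,, \qquad
    c_j(\bar\vu_{k, j+1}) = e^{\bar\vu_k\bl{j} - \bar\vu_{k+1}\bl{j}}\, c_j(\bar\vu_{k, j})\,,
\]
by factoring the common factor $e^{\bar\vu_{k, j+1}\bl{j}} = e^{\bar\vu_{k+1}\bl{j}}$ out of each summand in the definitions of $r_j$ and $c_j$, and using $\bar\vu_{k, j}\bl{j} = \bar\vu_k\bl{j}$ together with $\bar\vu_{k, j+1}\bl{i} = \bar\vu_{k,j}\bl{i}$ for $i \neq j$.

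Next I would multiply the first inequality in~\eqref{eq:update-multi} by $r_j(\bar\vu_{k, j})$, so the left-hand side becomes $\big|r_j(\bar\vu_{k, j+1}) - \sqrt{r_j(\bar\vu_{k,j}) c_j(\bar\vu_{k,j})}\big|$ and the right-hand side becomes $\tau\sqrt{r_j(\bar\vu_{k,j}) c_j(\bar\vu_{k,j})}$, which is precisely the first claimed bound. The second claimed bound follows symmetrically from the second inequality in~\eqref{eq:update-multi} by multiplying through by $c_j(\bar\vu_{k, j})$. Finally, the consequence is immediate from the two bounds and the triangle inequality:
\[
    \big|r_j(\bar\vu_{k, j+1}) - c_j(\bar\vu_{k, j+1})\big| \leq \big|r_j(\bar\vu_{k, j+1}) - \sqrt{r_j c_j}\big| + \big|\sqrt{r_j c_j} - c_j(\bar\vu_{k, j+1})\big| \leq 2\tau\sqrt{r_j(\bar\vu_{k, j}) c_j(\bar\vu_{k, j})}\,.
\]

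There is no substantive obstacle here: the fact is essentially a direct rescaling of the per-coordinate multiplicative error bound~\eqref{eq:update-multi} into an additive bound on the balanced row/column sums. The only thing to be careful about is the bookkeeping between $\bar\vu_{k+1}\bl{j}$ and $\bar\vu_{k, j+1}\bl{j}$ (they are equal by definition) and noting that $\mA_{jj} = 0$ so that the scaling factor indeed pulls out cleanly from the sums defining $r_j$ and $c_j$.
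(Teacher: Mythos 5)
Your proposal is correct and matches the paper's (implicit) argument: the paper states Fact~\ref{lem:step-err} as a direct consequence of the multiplicative error bound~\eqref{eq:update-multi}, which is exactly your rescaling argument using the identities $r_j(\bar\vu_{k,j+1}) = e^{\bar\vu_{k+1}\bl{j} - \bar\vu_k\bl{j}} r_j(\bar\vu_{k,j})$ and $c_j(\bar\vu_{k,j+1}) = e^{\bar\vu_k\bl{j} - \bar\vu_{k+1}\bl{j}} c_j(\bar\vu_{k,j})$ (valid since only the $j$\textsuperscript{th} coordinate changes and $\mA_{jj}=0$), followed by the triangle inequality.
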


We bound the accumulation of this imbalancing error by appropriately modifying the analysis in \S\ref{sec:analysis}. We begin by bounding the error in the descent lemma (Lemma~\ref{lem:descent}) and the imbalance lemma (Lemma~\ref{lem:imbalance}), showing that both hold up to an error term of the same order $\tau \sum_{j=1}^n \sqrt{r_j(\bar \vu_{k, j})c_j(\bar \vu_{k, j})}$.
\begin{lemma}[Inexact version of Lemma~\ref{lem:descent}]\label{lem:descent-err}
    For all $k \geq 0$ and $j \in [n]$,
    \begin{equation}\notag
    \begin{aligned}
        \varphi(\bar \vu_{k, j + 1}) - \varphi(\bar \vu_{k, j}) 
        \leq - \Big(\sqrt{r_j(\bar \vu_{k, j})} - \sqrt{c_j(\bar \vu_{k, j})}\Big)^2 + 2\tau\sqrt{r_j(\bar \vu_{k, j})c_j(\bar \vu_{k, j})}. 
    \end{aligned}
    \end{equation}
    Hence, for all $k \geq 0,$ the descent of the potential over the full cycle is
    \begin{equation}\notag
    \begin{aligned}
        \varphi(\bar \vu_{k+1}) - \varphi(\bar \vu_k) 
        \leq - \sum_{j = 1}^n \Big(\sqrt{r_j(\bar \vu_{k, j})} - \sqrt{c_j(\bar \vu_{k, j})}\Big)^2 + 2\tau\sum_{j = 1}^n\sqrt{r_j(\bar \vu_{k, j})c_j(\bar \vu_{k, j})}. 
    \end{aligned}
    \end{equation}
\end{lemma}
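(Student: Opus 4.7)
The plan is to mirror the proof of Lemma~\ref{lem:descent} almost verbatim, replacing the exact identity provided by Fact~\ref{lem:step} with the one-sided upper bounds provided by Fact~\ref{lem:step-err}, so that the $\tau$-imprecision of each inexact Osborne update surfaces as an additive slack term on the right-hand side. For readability I will write $r_j := r_j(\bar \vu_{k,j})$ and $c_j := c_j(\bar \vu_{k,j})$ throughout; analogous notation for the post-update iterate.

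For the per-iteration claim, the first step is to observe that $\bar \vu_{k, j+1}$ differs from $\bar \vu_{k, j}$ only in the $j$\textsuperscript{th} coordinate, and since $\mA$ has zero diagonal, the only summands of $\varphi(\vu) = \sum_{p,q} e^{\vu\bl{p} - \vu\bl{q}}\mA_{pq}$ that change are those indexing the $j$\textsuperscript{th} row/column. Consequently,
\[
   \varphi(\bar \vu_{k, j+1}) - \varphi(\bar \vu_{k, j})
   = r_j(\bar \vu_{k, j+1}) + c_j(\bar \vu_{k, j+1}) - r_j - c_j\,,
\]
exactly as in the proof of Lemma~\ref{lem:descent}. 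At this point the exact proof invoked $r_j(\vu_{k, j+1}) = c_j(\vu_{k, j+1}) = \sqrt{r_j c_j}$; here instead I apply the upper-bound direction of Fact~\ref{lem:step-err}, namely
\[
   r_j(\bar \vu_{k, j+1}) \leq (1+\tau)\sqrt{r_j c_j}\,, \qquad c_j(\bar \vu_{k, j+1}) \leq (1+\tau)\sqrt{r_j c_j}\,,
\]
to obtain
\[
   \varphi(\bar \vu_{k, j+1}) - \varphi(\bar \vu_{k, j})
   \leq 2(1+\tau)\sqrt{r_j c_j} - r_j - c_j
   = -\Big(\sqrt{r_j} - \sqrt{c_j}\Big)^2 + 2\tau\sqrt{r_j c_j}\,,
\]
which is the claimed per-iteration bound.

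For the cycle-level claim, I simply telescope the single-iteration bound over $j = 1, \ldots, n$, using $\varphi(\bar \vu_{k+1}) - \varphi(\bar \vu_k) = \sum_{j=1}^n \big( \varphi(\bar \vu_{k, j+1}) - \varphi(\bar \vu_{k, j})\big)$ and summing the inequality above.

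There is no real obstacle: the argument is essentially a line-by-line transcription of the proof of Lemma~\ref{lem:descent} with the exact identity replaced by the appropriate one-sided approximation. The only modest subtlety is checking that the relevant direction of the error in Fact~\ref{lem:step-err} is the one that yields an \emph{additive} slack (rather than swallowing the negative descent term), but this is immediate because we need upper bounds on $r_j(\bar \vu_{k, j+1})$ and $c_j(\bar \vu_{k, j+1})$, which Fact~\ref{lem:step-err} provides directly.
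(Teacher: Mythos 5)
Your proof is correct and is exactly the argument the paper intends: replicate the proof of Lemma~\ref{lem:descent} verbatim, replacing the exact identity of Fact~\ref{lem:step} with the upper-bound direction of Fact~\ref{lem:step-err}, which yields the additive $2\tau\sqrt{r_j c_j}$ slack, and then telescope over the cycle. No gaps.
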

\begin{proof}
    Identical to the proof of Lemma~\ref{lem:descent}, except replace Fact~\ref{lem:step} by its inexact version in Fact~\ref{lem:step-err}.
\end{proof}

\begin{lemma}[Inexact version of Lemma~\ref{lem:imbalance}]\label{lem:imbalance-err}
     For every $k \geq 0$,
     \begin{equation}
        \|\nabla \varphi(\bar \vu_{k+1})\|_1 
        \leq \sum_{j=2}^n \big|r_j(\bar \vu_{k, j}) - c_j(\bar \vu_{k, j})\big| + 4\tau \sum_{j=1}^n \sqrt{r_j(\bar \vu_{k, j})c_j(\bar \vu_{k, j})}. 
    \end{equation}
\end{lemma}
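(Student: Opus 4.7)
The plan is to trace through the proof of the exact Lemma~\ref{lem:imbalance}, inserting error terms at the two places where exactness is lost: (a)~Fact~\ref{lem:step} is replaced by its inexact counterpart Fact~\ref{lem:step-err}, giving only approximate balancing after each coordinate update, and (b)~the exact Osborne update is replaced by its inexact version~\eqref{eq:update-multi}. I expect both sources of error to contribute a term of order $\tau \sqrt{r_j(\bar\vu_{k,j}) c_j(\bar\vu_{k,j})}$, which together yield the stated $4\tau$ coefficient.

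First I would redo the partial-sum decomposition from the exact proof using the inexact iterates $\bar\vu_{k,j}$ in place of $\vu_{k,j}$. Since the inexact implementation only modifies the $j$\textsuperscript{th} coordinate at step $j$, the identities $\bar\vu_{k,j+1}^{(i)} = \bar\vu_{k+1}^{(i)}$ for $i \le j$ and the splittings of $r_j(\bar\vu_{k,j+1})$ and $c_j(\bar\vu_{k,j+1})$ through~\eqref{eq:partial-row-column-sums-2} still hold verbatim. Subtracting them and applying Fact~\ref{lem:step-err} replaces the exact identity~\eqref{eq:imbalance-1} by
\begin{equation*}
    r_j^{1:j-1}(\bar\vu_{k+1}) - c_j^{1:j-1}(\bar\vu_{k+1}) = c_j^{j+1:n}(\bar\vu_{k,j+1}) - r_j^{j+1:n}(\bar\vu_{k,j+1}) + \delta_j,
\end{equation*}
where $|\delta_j| := |r_j(\bar\vu_{k,j+1}) - c_j(\bar\vu_{k,j+1})| \le 2\tau\sqrt{r_j(\bar\vu_{k,j}) c_j(\bar\vu_{k,j})}$. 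Substituting into $\nabla_j \varphi(\bar\vu_{k+1}) = r_j(\bar\vu_{k+1}) - c_j(\bar\vu_{k+1})$ via the same rewriting as in~\eqref{eq:imbalance-decompose} produces the exact-case expression plus an additive $\delta_j$. Taking $\ell_1$ norms, the $\delta_j$ terms contribute at most $2\tau \sum_{j=1}^n \sqrt{r_j(\bar\vu_{k,j}) c_j(\bar\vu_{k,j})}$.

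The remaining ``main'' term is then handled by the exact same triangle-inequality and index-swap manipulations as in the exact proof, landing at
\begin{equation*}
    \sum_{i=2}^n \Big( \big|e^{\bar\vu_k^{(i)} - \bar\vu_{k+1}^{(i)}} - 1\big|\, c_i^{1:i-1}(\bar\vu_{k,i}) + \big|e^{\bar\vu_{k+1}^{(i)} - \bar\vu_k^{(i)}} - 1\big|\, r_i^{1:i-1}(\bar\vu_{k,i}) \Big).
\end{equation*}
Here I would bound partial sums by full sums and then use~\eqref{eq:update-multi} in place of the exact Osborne update; the triangle inequality gives
\begin{equation*}
    \big|e^{\bar\vu_k^{(i)} - \bar\vu_{k+1}^{(i)}} - 1\big| \le \big|\sqrt{r_i(\bar\vu_{k,i})/c_i(\bar\vu_{k,i})} - 1\big| + \tau\sqrt{r_i(\bar\vu_{k,i})/c_i(\bar\vu_{k,i})},
\end{equation*}
and symmetrically for the reciprocal. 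Multiplying the $|\sqrt{r_i/c_i} - 1|$ pieces by $c_i(\bar\vu_{k,i})$ (respectively $|\sqrt{c_i/r_i}-1|$ by $r_i(\bar\vu_{k,i})$) and collapsing as in the exact proof produces $\sum_{i=2}^n |r_i(\bar\vu_{k,i}) - c_i(\bar\vu_{k,i})|$, while each $\tau$-scaled piece contributes a further $\tau \sqrt{r_i(\bar\vu_{k,i}) c_i(\bar\vu_{k,i})}$, totalling $2\tau \sum_{i=2}^n \sqrt{r_i c_i}$. Combined with the $2\tau \sum_{j=1}^n \sqrt{r_j c_j}$ from the $\delta_j$ step, this yields the claimed $4\tau \sum_{j=1}^n \sqrt{r_j(\bar\vu_{k,j}) c_j(\bar\vu_{k,j})}$ error.

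The calculations are largely bookkeeping once the exact proof is in hand; the main obstacle is carefully tracking the two distinct sources of inexactness (post-update imbalance from Fact~\ref{lem:step-err} versus update inaccuracy from~\eqref{eq:update-multi}) and verifying that both contributions consolidate cleanly, without cross terms, into the single error term of the correct form. A minor check is also that the $j=1$ and $j=n$ boundary indices do not need special handling: $\nabla_n\varphi(\bar\vu_{k+1})$ picks up only the $\delta_n$ error (the "main" part vanishes), and $\delta_1$ is absorbed in the all-$j$ sum that already appears in the bound.
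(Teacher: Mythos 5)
Your proposal is correct and follows essentially the same route as the paper's proof: the post-update imbalance $r_j(\bar\vu_{k,j+1}) - c_j(\bar\vu_{k,j+1})$ (bounded via Fact~\ref{lem:step-err}) appears as an additive term in the gradient decomposition and contributes $2\tau\sum_j\sqrt{r_jc_j}$, while the triangle inequality applied with~\eqref{eq:update-multi} in the main term contributes the other $2\tau\sum_j\sqrt{r_jc_j}$. The bookkeeping of both error sources matches the paper's argument, so no changes are needed.
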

\begin{proof} We largely follow the proof of Lemma~\ref{lem:imbalance}, with additional care to bound the accumulated effect of the inexact updates.
We begin with an inexact version of~\eqref{eq:imbalance-decompose}: for any $j \in [n]$, the $j\textsuperscript{th}$ coordinate of the gradient $\nabla \varphi$ after completion of cycle $k$ is equal to:
    \begin{align*}
       \nabla_j \varphi(\bar \vu_{k+1}) = \;& r_j^{1: j - 1}(\bar \vu_{k+1}) + r_j^{j + 1: n}(\bar \vu_{k+1}) - c_j^{1: j - 1}(\bar \vu_{k+1}) - c_j^{j + 1: n}(\bar \vu_{k+1}) \\
       = \; & \Big(r_j^{j+1:n}(\bar \vu_{k+1}) - r_j^{j+1:n}(\bar \vu_{k, j + 1})\Big) - \Big(c_j^{j+1:n}(\bar \vu_{k+1}) - c_j^{j+1:n}(\bar \vu_{k, j + 1})\Big) \\
       & + \Big(r_j(\bar \vu_{k, j + 1}) - c_j(\bar \vu_{k, j + 1})\Big),
    \end{align*}
    where the argument is the same as in~\eqref{eq:imbalance-decompose}, except that in the final step we expand $r_j^{1: j - 1}(\bar \vu_{k+1}) = r_j^{1: j - 1}(\bar \vu_{k, j+1}) = r_j(\bar \vu_{k, j+1}) - r_j^{j+1: n}(\bar \vu_{k, j+1})$ and similarly $c_j^{1: j - 1}(\bar \vu_{k+1}) = c_j(\bar \vu_{k, j+1}) - c_j^{j+1: n}(\bar \vu_{k, j+1})$. Note that the above display matches~\eqref{eq:imbalance-decompose} up to the error term $r_j(\bar \vu_{k, j + 1}) - c_j(\bar \vu_{k, j + 1})$. Thus, by following the proof of Lemma~\ref{lem:imbalance} to bound the matching term and by using Fact~\ref{lem:step-err} to bound the additional error term,
     \begin{align*}
        \|\nabla \varphi(\bar \vu_{k+1})\|_1 
        \leq \;& \sum_{i=2}^n \Big(\big|e^{\bar \vu_{k}\bl{i} - \bar \vu_{k + 1}\bl{i}} - 1\big| c_i(\bar \vu_{k, i}) + \big|e^{\bar \vu_{k + 1}\bl{i} - \bar \vu_{k}\bl{i}} - 1\big| r_i(\bar \vu_{k, i})\Big) + 2\tau \sum_{i=1}^n \sqrt{r_i(\bar \vu_{k, i})c_i(\bar \vu_{k, i})}. 
    \end{align*}
    By the triangle inequality and the bound~\eqref{eq:update-multi} on the inexact Osborne update, 
    \begin{align*}
        \big|e^{\bar \vu_{k}\bl{i} - \bar \vu_{k + 1}\bl{i}} - 1 \big| c_i(\bar \vu_{k, i}) \leq \;& \Bigg|e^{\bar \vu_{k}\bl{i} - \bar \vu_{k + 1}\bl{i}} - \sqrt{\frac{r_i(\bar \vu_{k, i})}{c_i(\bar \vu_{k,i})}} \Bigg| c_i(\bar \vu_{k, i}) + \Bigg| \sqrt{\frac{r_i(\bar \vu_{k, i})}{c_i(\bar \vu_{k,i})}} - 1 \Bigg| c_i(\bar \vu_{k, i}) \\
        \leq \;& \tau \sqrt{r_i(\bar \vu_{k, i}) c_i(\bar \vu_{k,i})} + \Big|\sqrt{r_i(\bar \vu_{k, i})} - \sqrt{c_i(\bar \vu_{k,i})} \Big| \sqrt{c_i(\bar \vu_{k,i})}, \\
        \big|e^{\bar \vu_{k + 1}\bl{i} - \bar \vu_{k}\bl{i}} - 1\big| r_i(\bar \vu_{k, i}) \leq \;& \Bigg|e^{\bar \vu_{k + 1}\bl{i} - \bar \vu_{k}\bl{i}} - \sqrt{\frac{c_i(\bar \vu_{k,i})}{r_i(\bar \vu_{k, i})}} \Bigg| r_i(\bar \vu_{k, i}) + \Bigg| \sqrt{\frac{c_i(\bar \vu_{k,i})}{r_i(\bar \vu_{k, i})}} - 1 \Bigg| r_i(\bar \vu_{k, i}) \\
        \leq \;& \tau \sqrt{r_i(\bar \vu_{k, i}) c_i(\bar \vu_{k,i})} + \Big|\sqrt{r_i(\bar \vu_{k, i})} - \sqrt{c_i(\bar \vu_{k,i})} \Big| \sqrt{r_i(\bar \vu_{k, i})}.
    \end{align*}
    Combining the above two displays completes the proof:
    \begin{align*}
        \|\nabla \varphi(\vu_{k+1})\|_1 \leq \; & \sum_{i=2}^n \Big|\sqrt{r_i(\vu_{k, i})} - \sqrt{c_i(\vu_{k, i})}\Big| \Big( \sqrt{r_i(\vu_{k, i})} + \sqrt{c_i(\vu_{k, i})}\Big) + 4\tau \sum_{i=1}^n \sqrt{r_i(\bar \vu_{k, i})c_i(\bar \vu_{k, i})} \\
        = \; &\sum_{i=2}^n |{r_i(\vu_{k, i})} - {c_i(\vu_{k, i})}| + 4 \tau \sum_{i=1}^n \sqrt{r_i(\bar \vu_{k, i})c_i(\bar \vu_{k, i})}\,.
    \end{align*}
\end{proof}

Next we provide an inexact version of Proposition~\ref{prop:grad-bound}. The original proof in \S\ref{sec:analysis} required bounding several types of terms in two different cases and the inexact version has two more such terms, so for the convenience of the reader, we collect these bounds in the following lemma. 

\begin{lemma}[Helper lemma for inexact analysis]\label{lem:helper-err}
  Fix any cycle $k \geq 0$ and let $\alpha > 0$ be an analysis parameter.
   Consider the following two cases of row/column imbalance for each $j \in [n]$:
    \begin{enumerate}[label=\Circled{\arabic*},font=\sffamily]
        \item $\big|\sqrt{r_j(\bar \vu_{k, j})} - \sqrt{c_j(\bar \vu_{k, j})}\big| < \alpha \big(\sqrt{r_j(\bar \vu_{k, j})} + \sqrt{c_j(\bar \vu_{k, j})}\big),$ 
        \item $\big|\sqrt{r_j(\bar \vu_{k, j})} - \sqrt{c_j(\bar \vu_{k, j})}\big| \geq \alpha \big(\sqrt{r_j(\bar \vu_{k, j})} + \sqrt{c_j(\bar \vu_{k, j})}\big).$ 
    \end{enumerate}
    For shorthand, define
    \begin{align*}
        \Xone :&= \sum_{j \, : \, \Circled{1}} \Big(\sqrt{r_j(\bar \vu_{k, j})} - \sqrt{c_j(\bar \vu_{k, j})} \, \Big)^2\,, \\
        \Yone :&= \sum_{j \, : \, \Circled{1}} \Big|r_j(\bar \vu_{k, j}) - c_j(\bar \vu_{k, j})\Big|\,, \\
        \Zone :&= \sum_{j \, : \, \Circled{1}} \sqrt{r_j(\bar \vu_{k, j})  c_j(\bar \vu_{k, j})}\,, \\
        \Sone :&= \sum_{j \, : \, \Circled{1}} \Big( r_j(\bar \vu_{k, j}) + c_j(\bar \vu_{k, j}) \Big)\,,
    \end{align*}
    and similarly for $\Xtwo$, $\Ytwo$, $\Ztwo$, and $\Stwo$. Then
    \begin{align*}
        \Yone &\leq 2 \alpha \Sone\,, \\
        \Zone &\leq \frac{1}{2} \Sone\,, \\
        \Ytwo &\leq \frac{1}{\alpha} \Xtwo\,, \\
        \Ztwo &\leq \frac{1}{4\alpha^2} \Xtwo\,, \\
        \Sone &\leq 2 \Big[ \varphi(\bar \vu_{k}) + (1+\tau)\frac{1+\alpha}{1-\alpha} \varphi(\bar \vu_{k+1}) \Big]\,.
    \end{align*}  
\end{lemma}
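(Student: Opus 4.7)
The plan is to prove each of the five inequalities separately, since they decouple over the index set, and only the last one requires nontrivial work.

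For $\Yone$ and $\Zone$, the argument is a direct pointwise bound followed by summation over $j \in \Circled{1}$. I would factor $|r_j(\bar\vu_{k,j}) - c_j(\bar\vu_{k,j})| = |\sqrt{r_j(\bar\vu_{k,j})} - \sqrt{c_j(\bar\vu_{k,j})}|(\sqrt{r_j(\bar\vu_{k,j})} + \sqrt{c_j(\bar\vu_{k,j})})$ and use the case-$\Circled{1}$ hypothesis on the first factor, together with $(a+b)^2 \le 2(a^2+b^2)$, to get $|r_j - c_j| \leq 2\alpha(r_j + c_j)$; summing gives $\Yone \leq 2\alpha\Sone$. For $\Zone$, AM-GM gives $\sqrt{r_j(\bar\vu_{k,j}) c_j(\bar\vu_{k,j})} \leq \tfrac12(r_j(\bar\vu_{k,j}) + c_j(\bar\vu_{k,j}))$, and summing yields $\Zone \leq \tfrac12 \Sone$.

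For $\Ytwo$ and $\Ztwo$, I would similarly argue pointwise. The case-$\Circled{2}$ hypothesis is equivalent to $\sqrt{r_j(\bar\vu_{k,j})} + \sqrt{c_j(\bar\vu_{k,j})} \leq \tfrac1\alpha |\sqrt{r_j(\bar\vu_{k,j})} - \sqrt{c_j(\bar\vu_{k,j})}|$. Factoring $|r_j - c_j|$ as before yields $|r_j - c_j| \leq \tfrac{1}{\alpha}(\sqrt{r_j} - \sqrt{c_j})^2$, and summing gives $\Ytwo \leq \tfrac1\alpha \Xtwo$. For $\Ztwo$, the AM-GM inequality gives $\sqrt{r_j c_j} \leq \tfrac14(\sqrt{r_j}+\sqrt{c_j})^2$; squaring the case-$\Circled{2}$ hypothesis then yields $\sqrt{r_j c_j} \leq \tfrac{1}{4\alpha^2}(\sqrt{r_j} - \sqrt{c_j})^2$, and summing gives the claim.

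The hardest part is the bound on $\Sone$, where I expect the analysis to mirror \eqref{eq:inter-final-bound} from the proof of Proposition~\ref{prop:grad-bound} but with an extra $(1+\tau)$ factor tracking the inexactness of the Osborne update. The plan is: for each $j \in \Circled{1}$, decompose
\begin{align*}
r_j(\bar\vu_{k,j}) &= e^{\bar\vu_k^{(j)} - \bar\vu_{k+1}^{(j)}} r_j^{1:j-1}(\bar\vu_{k+1}) + r_j^{j+1:n}(\bar\vu_k), \\
c_j(\bar\vu_{k,j}) &= e^{\bar\vu_{k+1}^{(j)} - \bar\vu_k^{(j)}} c_j^{1:j-1}(\bar\vu_{k+1}) + c_j^{j+1:n}(\bar\vu_k),
\end{align*}
using the partial sum splitting \eqref{eq:partial-row-column-sums-2} together with the fact that $\bar\vu_{k,j}$ agrees with $\bar\vu_{k+1}$ on coordinates $<j$ and with $\bar\vu_k$ on coordinates $\geq j$. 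Next, apply the inexact update bound \eqref{eq:update-multi} to get $e^{\bar\vu_k^{(j)} - \bar\vu_{k+1}^{(j)}} \leq (1+\tau)\sqrt{r_j(\bar\vu_{k,j})/c_j(\bar\vu_{k,j})}$, and similarly for the column. Crudely bounding partial row/column sums by full row/column sums, the case-$\Circled{1}$ hypothesis (equivalent to both $\sqrt{r_j/c_j}$ and $\sqrt{c_j/r_j}$ being at most $\tfrac{1+\alpha}{1-\alpha}$) then yields
\begin{align*}
r_j(\bar\vu_{k,j}) + c_j(\bar\vu_{k,j}) \leq (1+\tau)\tfrac{1+\alpha}{1-\alpha}\bigl( r_j(\bar\vu_{k+1}) + c_j(\bar\vu_{k+1}) \bigr) + r_j(\bar\vu_k) + c_j(\bar\vu_k).
\end{align*}
Summing over $j \in \Circled{1}$, bounding by the sum over all $j \in [n]$, and identifying $\sum_j (r_j(\bar\vu) + c_j(\bar\vu)) = 2\varphi(\bar\vu)$ then gives the desired bound. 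The only conceptual novelty relative to the exact-arithmetic case in Proposition~\ref{prop:grad-bound} is the $(1+\tau)$ factor, which is inherited from the multiplicative form \eqref{eq:update-multi} of the inexact Osborne update.
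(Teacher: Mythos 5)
Your proposal is correct and follows essentially the same route as the paper: the first four bounds are the same pointwise factoring/AM--GM arguments, and the $\Sone$ bound uses the same inexact analogue of \eqref{eq:inter-final-bound}, replacing the exact Osborne update with the multiplicative bound \eqref{eq:update-multi} and invoking the case-\Circled{1} equivalence $\max\{\sqrt{r_j/c_j},\sqrt{c_j/r_j}\} < \tfrac{1+\alpha}{1-\alpha}$ before summing over $j$. No gaps.
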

\begin{proof}
    For shorthand, denote $r_j = r_j(\bar \vu_{k, j})$ and $c_j = c_j(\bar \vu_{k, j})$.
    For the bound on $\Yone$, note that $|r_j - c_j| = |\sqrt{r_j} - \sqrt{c_j}| |\sqrt{r_j} + \sqrt{c_j}| 
    \leq \alpha (\sqrt{r_j} + \sqrt{c_j})^2 \leq 2\alpha (r_j + c_j)$ by $\Circled{1}$. For the bound on $\Zone$, note that $\sqrt{r_jc_j} \leq \tfrac{1}{2}(r_j+c_j)$ by the AM-GM inequality. For the bound on $\Ytwo$, note that $|r_j - c_j| \leq \tfrac{1}{\alpha} (\sqrt{r_j} - \sqrt{c_j})^2$ by $\Circled{2}$. For the bound on $\Ztwo$, note that $\sqrt{r_jc_j} \leq \tfrac{1}{4} (\sqrt{r_j} + \sqrt{c_j})^2 \leq \tfrac{1}{4\alpha^2}(\sqrt{r_j} - \sqrt{c_j})^2$ by $\Circled{2}$. 
    
    For the bound on $\Sone$, use the following inexact version of~\eqref{eq:inter-final-bound}:
    \begin{equation*}
    \begin{aligned}
        &r_j(\bar \vu_{k, j})
        \leq
        e^{\bar \vu_{k }\bl{j} - \bar \vu_{k + 1}\bl{j}} \sqrt{\frac{r_j(\bar \vu_{k, j})}{c_j(\bar \vu_{k, j})}}
        r_j(\bar \vu_{k + 1}) + r_j(\bar \vu_{k}) 
        \leq 
        (1 + \tau)\frac{1 + \alpha}{1 - \alpha}
        r_j(\bar \vu_{k + 1}) + r_j(\bar \vu_{k}) \,,
        \\
        &c_j(\bar \vu_{k, j})
        \leq 
        e^{\bar \vu_{k + 1}\bl{j} - \bar \vu_{k}\bl{j}}\sqrt{\frac{c_j(\bar \vu_{k, j})}{r_j(\bar \vu_{k, j})}}
        c_j(\bar \vu_{k+1}) + c_j(\bar \vu_{k})
        \leq (1 + \tau)\frac{1 + \alpha}{1 - \alpha}
        c_j(\bar \vu_{k+1}) + c_j(\bar \vu_{k})\,,
    \end{aligned}
    \end{equation*}
    and sum over all indices $j$, in the same way as in~\eqref{eq:grad-1}.
\end{proof}

Using this, we argue that if a cycle of the inexact Osborne algorithm does not improve the objective $\varphi$, then the current iterate must already be an $\cO(\epsilon)$-balancing.

\begin{lemma}[Convergence if no descent]\label{lem:no-descent-converge}
    Suppose $\eps \leq 1/2$ and $\tau \leq \epsilon^2$. If $\varphi(\bar \vu_{k + 1}) - \varphi(\bar \vu_k) \geq 0$ for some cycle $k$, then $\|\nabla \varphi(\bar \vu_{k + 1})\|_1 / \varphi(\bar \vu_{k + 1}) \leq 120\epsilon$. 
\end{lemma}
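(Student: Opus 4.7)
The plan is to combine the no-descent hypothesis with the inexact descent lemma (Lemma~\ref{lem:descent-err}) to show that the ``large imbalance'' summands are negligible, and then to bound $\|\nabla\varphi(\bar\vu_{k+1})\|_1$ via the inexact imbalance lemma (Lemma~\ref{lem:imbalance-err}), using the case split and the bounds compiled in Lemma~\ref{lem:helper-err}. In what follows, I will write $r_j = r_j(\bar\vu_{k,j})$ and $c_j = c_j(\bar\vu_{k,j})$ for brevity.

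\textbf{Step 1: extract consequences of no-descent.} By Lemma~\ref{lem:descent-err}, the assumption $\varphi(\bar\vu_{k+1}) - \varphi(\bar\vu_k) \geq 0$ gives
\[
\sum_{j=1}^n \bigl(\sqrt{r_j}-\sqrt{c_j}\bigr)^2 \;\leq\; 2\tau \sum_{j=1}^n \sqrt{r_j c_j}\,.
\]
Restricting the left side to $j\,:\,\Circled{2}$ and applying the bound $\Ztwo \leq \frac{1}{4\alpha^2}\Xtwo$ together with $\Zone \leq \frac{1}{2}\Sone$ from Lemma~\ref{lem:helper-err}, I obtain
\[
\Xtwo \;\leq\; \tau\,\Sone + \frac{\tau}{2\alpha^2}\,\Xtwo\,.
\]
Choosing $\alpha = \epsilon$ and using $\tau \leq \epsilon^2$ makes $\tfrac{\tau}{2\alpha^2} \leq \tfrac{1}{2}$, so rearranging yields $\Xtwo \leq 2\tau\,\Sone \leq 2\epsilon^2\,\Sone$. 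This is the crucial ``absorbing'' step: it says that in the no-descent regime, the $\Circled{2}$ contribution is controlled (up to a factor $\tau$) by the $\Circled{1}$ contribution $\Sone$.

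\textbf{Step 2: bound the gradient via the imbalance lemma.} Apply Lemma~\ref{lem:imbalance-err} and split the sum into the two cases to get
\[
\|\nabla\varphi(\bar\vu_{k+1})\|_1 \;\leq\; \Yone + \Ytwo + 4\tau(\Zone + \Ztwo)\,.
\]
Using $\Yone \leq 2\alpha\,\Sone$, $\Ytwo \leq \tfrac{1}{\alpha}\Xtwo$, $\Zone \leq \tfrac{1}{2}\Sone$, $\Ztwo \leq \tfrac{1}{4\alpha^2}\Xtwo$ from Lemma~\ref{lem:helper-err}, and plugging in $\alpha=\epsilon$ and the bound $\Xtwo \leq 2\epsilon^2\Sone$ from Step~1, together with $\tau \leq \epsilon^2$, gives (after simple arithmetic) a bound of the form $\|\nabla\varphi(\bar\vu_{k+1})\|_1 \leq C\epsilon\,\Sone$ for an absolute constant $C$.

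\textbf{Step 3: relate $\Sone$ to $\varphi(\bar\vu_{k+1})$.} The final bound in Lemma~\ref{lem:helper-err} yields
\[
\Sone \;\leq\; 2\Bigl[\varphi(\bar\vu_k) + (1+\tau)\tfrac{1+\alpha}{1-\alpha}\,\varphi(\bar\vu_{k+1})\Bigr]\,.
\]
Since by hypothesis $\varphi(\bar\vu_k) \leq \varphi(\bar\vu_{k+1})$, and since $\alpha = \epsilon \leq 1/2$ and $\tau \leq 1/4$ make the prefactor an absolute constant, this gives $\Sone \leq C'\varphi(\bar\vu_{k+1})$. Combining with Step~2 and dividing through by $\varphi(\bar\vu_{k+1})$ yields $\|\nabla\varphi(\bar\vu_{k+1})\|_1/\varphi(\bar\vu_{k+1}) \leq CC'\epsilon$, with $CC' \leq 120$ after careful tracking of the constants (my rough back-of-envelope gives $\leq 60$, so $120$ should be comfortably achievable).

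\textbf{Main obstacle.} There is no conceptual difficulty; the work is in Step~1, where one must make sure that the circular dependency $\Xtwo \leq \tau\Sone + \tfrac{\tau}{2\alpha^2}\Xtwo$ can be closed, which forces the choice $\alpha = \Theta(\epsilon)$ once $\tau = \cO(\epsilon^2)$ is assumed. The rest is bookkeeping to verify the final constant $120$, which requires care but is routine.
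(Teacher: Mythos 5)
Your proposal is correct and follows essentially the same route as the paper's proof: with $\alpha=\epsilon$, combine the no-descent hypothesis with Lemma~\ref{lem:descent-err} and the bounds in Lemma~\ref{lem:helper-err} to absorb the $\tau\Ztwo$ term and conclude $\Xtwo \lesssim \epsilon^2\Sone$, then bound $\|\nabla\varphi(\bar\vu_{k+1})\|_1$ via Lemma~\ref{lem:imbalance-err} and relate $\Sone$ to $\varphi(\bar\vu_{k+1})$ using the last bound of Lemma~\ref{lem:helper-err} together with $\varphi(\bar\vu_k)\leq\varphi(\bar\vu_{k+1})$. The only difference is cosmetic (you close the self-referential inequality in $\Xtwo$ where the paper absorbs $2\tau\Ztwo\leq\Xtwo/2$ directly), and your constant tracking comfortably fits within the stated $120\epsilon$.
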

\begin{proof}
    We use the shorthand notation in Lemma~\ref{lem:helper-err} with $\alpha = \epsilon$. By using in order the no-descent assumption, the inexact descent lemma (Lemma~\ref{lem:descent-err}), and then Lemma~\ref{lem:helper-err} and the trivial bound $-\Xone \leq 0$ and $\tau \leq \epsilon^2$,
    \begin{align*}
        0 
        \leq 
        \varphi(\bar \vu_{k + 1}) - \varphi(\bar \vu_k)
        \leq 
        -\Xone + 2\tau \Zone - \Xtwo + 2\tau \Ztwo
        \leq 2\tau \Zone  -\frac{\Xtwo}{2}\,.
    \end{align*}
    Re-arranging and using $\tau \leq \eps^2$ gives $\Xtwo \leq 4\epsilon^2 \Zone$. By using the inexact imbalance lemma (Lemma~\ref{lem:imbalance}), this bound on $\Xtwo$, Lemma~\ref{lem:helper-err}, and crudely bounding constants, we obtain the desired result:
    \begin{align*}
         \|\nabla \varphi(\bar \vu_{k+1})\|_1
         \leq
         \Yone + 4\tau \Zone + \Ytwo + 4\tau \Ztwo
         \leq 12\epsilon \Sone
         \leq 120 \epsilon \varphi( \vu_{k+1})\,.
    \end{align*}
\end{proof}

Since Lemma~\ref{lem:no-descent-converge} shows that an $\cO(\epsilon)$-balancing is produced whenever a cycle of the inexact Osborne algorithm does not improve the objective, we can focus on the case that the objective is monotonically decreasing. In this setting, we prove the following inexact version of Proposition~\ref{prop:grad-bound}, which we recall was the key penultimate step from which our final runtime follows in \S\ref{sec:analysis}.

\begin{proposition}[Inexact version of Proposition~\ref{prop:grad-bound}]\label{prop:grad-bound-truncated}
Suppose $\ebar \leq 1/2$ and $\tau = \eps'^2/400$. For any initialization $\bar \vu_0 \in \R^n$ (not necessarily zero) and any number of cycles $K' \geq 1$ during which the potential $\varphi$ monotonically decreases, there exists $t \in [K']$ such that
\begin{align*}
    \frac{\|\nabla \varphi(\bar \vu_{t})\|_1}{\varphi(\bar \vu_{t})} \leq \;& 
    \ebar + \frac{50 \varphi(\bar \vu_0)}{ \ebar K' \varphi(\bar \vu_{K'})}.
\end{align*}
\end{proposition}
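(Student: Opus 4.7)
\textbf{Proof plan for Proposition~\ref{prop:grad-bound-truncated}.}

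The overall strategy is to follow the same skeleton as the proof of Proposition~\ref{prop:grad-bound}, tracking the extra $\tau$-sized truncation terms that appear in Lemmas~\ref{lem:descent-err} and~\ref{lem:imbalance-err}, and showing that the choice $\tau = \eps'^2/400$ and an appropriate $\alpha = \Theta(\eps')$ in the case analysis of Lemma~\ref{lem:helper-err} allow these extra terms to be absorbed without degrading the rate.

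First, an averaging argument over the $K'$ cycles produces an index $k \in \{0, 1, \dots, K'-1\}$ satisfying $\varphi(\bar \vu_k) - \varphi(\bar \vu_{k+1}) \leq (\varphi(\bar \vu_0) - \varphi(\bar \vu_{K'}))/K'$; set $t = k+1$. I would then apply Lemma~\ref{lem:imbalance-err} at $\bar \vu_{k+1}$ and split the summands $|r_j(\bar \vu_{k,j}) - c_j(\bar \vu_{k,j})|$ according to the dichotomy \Circled{1}/\Circled{2} from Lemma~\ref{lem:helper-err}, with $\alpha$ to be chosen as a small constant multiple of $\eps'$. Using the shorthand of that lemma,
\begin{equation*}
\|\nabla \varphi(\bar \vu_{k+1})\|_1 \;\leq\; \Yone + \Ytwo + 4\tau \bigl(\Zone + \Ztwo\bigr).
\end{equation*}

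Next, I would control each piece using Lemma~\ref{lem:helper-err}. The case-\Circled{1} terms $\Yone + 4\tau \Zone$ are bounded by $(2\alpha + 2\tau)\Sone$, which in turn is bounded by $\cO(\alpha + \tau)\bigl[\varphi(\bar \vu_k) + \varphi(\bar \vu_{k+1})\bigr]$ since $(1+\tau)(1+\alpha)/(1-\alpha)$ is a universal constant for $\alpha \leq 1/2$ and $\tau \leq 1$. The case-\Circled{2} terms $\Ytwo + 4\tau\Ztwo$ are bounded by $(\tfrac{1}{\alpha} + \tfrac{\tau}{\alpha^2})\Xtwo$. To bound $\Xtwo$ I would use the inexact descent lemma (Lemma~\ref{lem:descent-err}):
\begin{equation*}
\Xtwo \;\leq\; \Xone + \Xtwo \;\leq\; \varphi(\bar \vu_k) - \varphi(\bar \vu_{k+1}) + 2\tau \bigl(\Zone + \Ztwo\bigr),
\end{equation*}
and then reapply Lemma~\ref{lem:helper-err} to bound $2\tau\Zone \leq \tau\Sone$ and $2\tau \Ztwo \leq \tfrac{\tau}{2\alpha^2}\Xtwo$. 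The key step is that the self-referential $\tau \Xtwo/\alpha^2$ term can be absorbed into the left-hand side whenever $\tau \leq \alpha^2/2$, which is exactly what the hypothesis $\tau = \eps'^2/400$ combined with $\alpha = \Theta(\eps')$ buys. This yields $\Xtwo \leq 2[\varphi(\bar \vu_k) - \varphi(\bar \vu_{k+1})] + \cO(\tau)[\varphi(\bar \vu_k) + \varphi(\bar \vu_{k+1})]$.

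Assembling everything and writing $\varphi(\bar \vu_k) = \varphi(\bar \vu_{k+1}) + [\varphi(\bar \vu_k) - \varphi(\bar \vu_{k+1})]$ to group terms, I would get a bound of the form
\begin{equation*}
\|\nabla \varphi(\bar \vu_{k+1})\|_1 \;\leq\; C_1 \alpha\, \varphi(\bar \vu_{k+1}) + \frac{C_2}{\alpha}\bigl[\varphi(\bar \vu_k) - \varphi(\bar \vu_{k+1})\bigr],
\end{equation*}
for universal constants $C_1, C_2$, since the $\tau$-dependent coefficients reduce to $\cO(\alpha)$ once $\tau \leq \alpha^2$. Dividing by $\varphi(\bar \vu_{k+1})$, using monotonicity of $\varphi$ along the cycles to bound $\varphi(\bar \vu_{k+1}) \geq \varphi(\bar \vu_{K'})$, and plugging in the averaging bound from the first step gives $\|\nabla \varphi(\bar \vu_{k+1})\|_1/\varphi(\bar \vu_{k+1}) \leq C_1 \alpha + C_2 \varphi(\bar \vu_0)/(\alpha K' \varphi(\bar \vu_{K'}))$. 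Finally, I would select $\alpha = \eps'/C_1$ (compatible with $\tau = \eps'^2/400$ provided $C_1$ is chosen appropriately relative to the hidden constants, e.g.\ $\alpha = \eps'/20$), which delivers the claimed bound $\eps' + 50\varphi(\bar \vu_0)/(\eps' K' \varphi(\bar \vu_{K'}))$.

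The main obstacle, as in the exact case, is bookkeeping: tracking the several $\tau$-terms introduced by both the inexact descent and inexact imbalance lemmas, and handling the self-referential $\tau \Xtwo/\alpha^2$ term via the hypothesis $\tau \leq \alpha^2$. What makes the inexact case genuinely more subtle than the exact one is that the monotonicity assumption $\varphi(\bar \vu_{k+1}) \leq \varphi(\bar \vu_k)$, which is automatic in Proposition~\ref{prop:grad-bound}, must now be assumed explicitly; this is why Lemma~\ref{lem:no-descent-converge} is needed separately to dispose of the non-descending case before applying this proposition in the main runtime argument.
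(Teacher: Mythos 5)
Your proposal is correct and follows essentially the same route as the paper's proof: the same averaging argument, the same \Circled{1}/\Circled{2} split via Lemma~\ref{lem:helper-err} with $\alpha = \Theta(\ebar)$, the same use of the inexact descent and imbalance lemmas with the $\tau$-terms absorbed under $\tau \leq \alpha^2$ (your "absorb the self-referential $\tau\Xtwo/\alpha^2$ term" is exactly the paper's step $-\Xtwo + 2\tau\Ztwo \leq -\Xtwo/2$), and the same final assembly using monotonicity and the choice $\alpha = \ebar/20$. Your closing remark about why monotonicity must be assumed and handled separately via Lemma~\ref{lem:no-descent-converge} also matches the paper's structure.
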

\begin{proof}
We largely follow the proof of Proposition~\ref{prop:grad-bound}, with additional care to bound the accumulated effect of the inexact updates. First, by the same averaging argument, there exists $k \in \{0, 1, \dots, K' - 1\}$ satisfying 
\begin{align}\label{eq:descent-k-err}
    \varphi(\bar \vu_{k}) - \varphi(\bar \vu_{k + 1}) \leq \frac{\varphi(\bar \vu_0) - \varphi(\bar \vu_{K'})}{K'}.
\end{align}
We will show that the claimed bound holds for $t = k+1$. For this $k$, let us use the shorthand notation in Lemma~\ref{lem:helper-err} with $\alpha = \ebar/20$. Set $\tau \leq \alpha^2$. By using in order the inexact descent lemma (Lemma~\ref{lem:descent-err}), and then Lemma~\ref{lem:helper-err} and the trivial bound $-\Xone \leq 0$,
    \begin{align*}
        \varphi(\bar \vu_{k + 1}) - \varphi(\bar \vu_k)
        \leq 
        -\Xone + 2\tau \Zone - \Xtwo + 2\tau \Ztwo
        \leq \alpha^2 \Sone - \frac{\Xtwo}{2} \,.
    \end{align*}
    Re-arranging gives $
    \Xtwo \leq 2\tau \Sone + 2 \big( \varphi(\bar \vu_{k}) - \varphi(\bar \vu_{k+1}) \big)$.
    By using the inexact imbalance lemma (Lemma~\ref{lem:imbalance}), this bound on $\Xtwo$, Lemma~\ref{lem:helper-err}, and then crudely bounding constants,
    \begin{align*}
         \|\nabla \varphi(\bar \vu_{k+1})\|_1
         &\leq
         \Yone + 4\tau \Zone + \Ytwo + 4\tau \Ztwo
         \\ &\leq 4\alpha(1+\alpha) \Bigg(  \Sone + \frac{\varphi(\bar \vu_{k}) - \varphi(\bar \vu_{k+1})}{2\alpha^2} \Bigg)
         \\ & \leq  \ebar \varphi(\bar \vu_{k+1}) +  \frac{50}{\ebar} \Big(\varphi(\bar \vu_{k}) - \varphi(\bar \vu_{k+1})\Big) 
         \,.
    \end{align*}
    The proof is complete by plugging in~\eqref{eq:descent-k-err}, dividing both sides by $\varphi(\bar \vu_{k+1})$, using the monotonicity assumption to bound $\varphi(\bar \vu_{k+1}) \geq \varphi(\bar \vu_{K'})$, and noting that by the definition of $\varphi$ we have $\varphi(\vu_{K'}) > 0$.
\end{proof}

Finally, we can now establish the claimed bit complexity result. For the convenience of the reader, we restate the result before proving it.

\bitComplexity*
\begin{proof}
    \textbf{Bit-complexity.} For $(\romannumeral1)$, $(\romannumeral2)$, $(\romannumeral3)$, this follows by the same argument as in~\cite[Theorem 13]{altschuler2023near} and choosing $\gamma = \Theta(\epsilon / n)$ and $\gamma' = \Theta(\epsilon^2)$ in Implementation~\ref{alg:implementation}. For $(\romannumeral4)$, apply Lemma~\ref{lem:termination-err} with $\bar \epsilon = \Theta(\epsilon)$. \textbf{Runtime.} If Implementation~\ref{alg:implementation} of cyclic Osborne does not decrease the potential value in some cycle, Lemma~\ref{lem:no-descent-converge} implies that the current iterate is $\Theta(\epsilon)$-balanced, hence we can successfully terminate. Thus, it suffices to consider the case that the algorithm monotonically decreases the potential value in each cycle. In this case, by following the proof of Theorem~\ref{thm:main} (with the usage of Proposition~\ref{prop:grad-bound} replaced by its inexact version in Proposition~\ref{prop:grad-bound-truncated}), we conclude that the number of arithmetic operations is at most $\cO\big(\frac{m\log \kappa}{\epsilon}\min\{\frac{1}{\epsilon}, \d\}\big)$.
\end{proof}

\end{document}